\newcommand{\qe}{}
\newtheorem{Thm}{Theorem}[section]
\newtheorem{Lem}[Thm]{Lemma}
\newtheorem{Coro}[Thm]{Corollary}
\numberwithin{equation}{section}
\newcommand{\1}{\mathbf{1}}
\newcommand{\R}{\mathbb{R}}
\newcommand{\Rd}{{\mathbb{R}^3}}
\renewcommand{\Re}{\text{Re}}
\newcommand{\N}{\mathfrak{N}}
\newcommand{\E}{\mathcal{E}}
\newcommand{\D}{\mathcal{D}}
\newcommand{\pa}{\partial}
\newcommand{\na}{\nabla}
\newcommand{\al}{\alpha}
\renewcommand{\S}{\mathbb{S}}
\newcommand{\<}{\langle}
\renewcommand{\>}{\rangle}
\newcommand{\I}{\mathbf{I}}
\newcommand{\II}{\mathbf{I}_{\pm}}
\renewcommand{\P}{\mathbf{P}}
\newcommand{\PP}{\mathbf{P}_{\pm}}
\newcommand{\T}{\mathcal{T}}
\newcommand{\vertiii}[1]{{\left\vert\kern-0.25ex\left\vert\kern-0.25ex\left\vert #1 \right\vert\kern-0.25ex\right\vert\kern-0.25ex\right\vert}}
\title[Regularity of VPB system for soft potential]{Global Regularity of the Vlasov-Poisson-Boltzmann System Near Maxwellian Without Angular Cutoff for Soft Potential}
\author[D.-Q. Deng]{Dingqun Deng}
\address[D.-Q. Deng]{Beijing Institute of Mathematical Sciences and Applications and Yau Mathematical Science Center, Tsinghua Univeristy, Beijing, People's Republic of China}
\email{dingqun.deng@gmail.com}
\thanks{ORCID: 0000-0001-9678-314X}
\begin{document}

\subjclass[2020]{76X05, 35Q20, 76P05, 82C40.}
\keywords{Vlasov-Poisson-Boltzmann system \and regularity \and without angular cutoff \and regularizing effect.}

\begin{abstract}
We consider the non-cutoff Vlasov-Poisson-Boltzmann (VPB) system of two species with soft potential in the whole space $\mathbb{R}^3$ when an initial data is near Maxwellian. Continuing the work Deng [{\it Comm. Math. Phys.} 387, 1603-1654 (2021)] for hard potential case, we prove the global regularity of the Cauchy problem to VPB system for the case of soft potential in the whole space for the whole range $0<s<1$. This completes the smoothing effect to the Vlasov-Poisson-Boltzmann system, which shows that any classical solutions are smooth with respect to $(t,x,v)$ for any positive time $t>0$. The proof is based on the time-weighted energy method building upon the pseudo-differential calculus. 

\end{abstract}

\date{\today}
\maketitle	
\tableofcontents

\tableofcontents

\section{Introduction}
The Vlasov-Poisson-Boltzmann system is an important physical model to describe the time evolution of plasma particles of two species (e.g. ions and electrons). 
In this work we study the smoothing effect of solutions to non-cutoff Vlasov-Poisson-Boltzmann system with $-\frac{3}{2}-2s<\gamma\le -2s$ and $0<s<1$. We find that the solutions enjoy the same smoothing phenomenon as the Boltzmann equation, which gives the regularity of the Vlasov-Poisson-Boltzmann system.  Since Duan-Liu \cite{Duan2013} found the global solution for non-cutoff soft potential with $1/2\le s<1$, the smoothing effect for the VPB system is an open interesting problem. 
In \cite{Deng2021b}, the author finds out the smoothing effect for hard potential. In this work, we finally recover the smoothing effect for non-cutoff soft potential with the whole range $0<s<1$.

\subsection{Equations} We consider the Vlasov-Poisson-Boltzmann system of two species in the whole space $\R^3$, cf. \cite{Krall1973,Guo2002}:
\begin{equation}\begin{aligned}\label{1}
	\partial_tF_+ + v\cdot\nabla_xF_+ +E\cdot\nabla_vF_+ = Q(F_+,F_+) + Q(F_-,F_+),\\
	\partial_tF_- + v\cdot\nabla_xF_- -E\cdot\nabla_vF_- = Q(F_-,F_-) + Q(F_+,F_-).
\end{aligned}
\end{equation}
The self-consistent electrostatic field is taken as $E(t,x) = -\nabla_x\phi$, with the electric potential $\phi$ given by 
\begin{align}\label{2}
	-\Delta_x\phi = \int_{\Rd}(F_+-F_-)\,dv, \quad  \phi\to 0 \text{ as } |x|\to\infty.
\end{align}
The initial data of the system is 
\begin{align}\label{3}
	F_\pm(0,x,v) = F_{\pm,0}(x,v). 
\end{align}
The unknown function $F_\pm(t,x,v)\ge 0$ represents the velocity distribution for the particle with position $x\in \Rd$ and velocity $v\in\Rd$ at time $t\ge 0$. The bilinear collision term $Q(F,G)$ on the right hand side of \eqref{1} is given by 
\begin{align*}
	Q(F,G)(v) = \int_{\Rd}\int_{\S^2}B(v-v_*,\sigma)\big(F'_*G' - F_*G\big)\,d\sigma dv_*, 
\end{align*} 
where $F' = F(x,v',t)$, $G'_* = G(x,v'_*,t)$, $F = F(x,v,t)$, $G_* =G(x,v_*,t)$. Velocity pairs $(v,v_*)$ and $(v',v'_*)$ are velocities before and after binary elastic collision respectively. They are defined by 
\begin{align*}
	v' = \frac{v+v_*}{2}+\frac{|v-v_*|}{2}\sigma,\ \
	v'_* = \frac{v+v_*}{2}-\frac{|v-v_*|}{2}\sigma.
\end{align*}
This two pair of velocities satisfy the conservation law of momentum and energy:
	$v+v_*=v'+v'_*,\ \ |v|^2+|v_*|^2=|v'|^2+|v'_*|^2.$

\subsection{Collision Kernel}
The Boltzmann collision kernel $B$ is defined as 
\begin{align*}
	B(v-v_*,\sigma) = |v-v_*|^\gamma b(\cos\theta),
\end{align*}
for some function $b$ and $\gamma$ determined by the intermolecular interactive mechanism with $\cos\theta=\frac{v-v_*}{|v-v_*|}\cdot \sigma$. Without loss of generality, we can assume $B(v-v_*,\sigma)$ is supported on $(v-v_*)\cdot\sigma\ge 0$, which corresponds to $\theta\in(0,\pi/2]$, since $B$ can be replaced by its symmetrized form $\overline{B}(v-v_*,\sigma) = B(v-v_*,\sigma)+B(v-v_*,-\sigma)$ in $Q(f,f)$.
The angular function $\sigma\mapsto b(\cos\theta)$ is not integrable on $\S^2$. Moreover, there exists $0<s<1$ such that 
\begin{align*}
\frac{1}{C}\theta^{-1-2s}\le	\sin\theta b(\cos\theta)\le C \theta^{-1-2s}\ \text{ on }\theta \in (0,\pi/2],
\end{align*}
for some $C>0$. 
It's convenient to call soft potential when $\gamma+2s< 0$, and hard potential when $\gamma+2s\ge0$. In this work, we always assume
\begin{align*}
	0< s <1, \quad -\frac{3}{2}< \gamma\le -2s.
\end{align*}

In this paper, we are going to establish the smoothing effect of the solutions to the Cauchy problem \eqref{1}, \eqref{2} and \eqref{3} of the Vlasov-Poisson-Boltzmann system near the global Maxwellian equilibrium. For global existence, Guo \cite{Guo2002} firstly investigate the hard-sphere model of the Vlasov-Poisson-Boltzmann system in a periodic box. Since then, the energy method was largely developed for the Boltzmann equation with the self-consistent electric and magnetic fields. Duan-Strain \cite{Duan2011} analyzes the optimal time decay rate for the Vlasov-Maxwell-Boltzmann system with cutoff hard potential. Guo \cite{Guo2012} gives the global existence of the Vlasov-Poisson-Landau system by using an elegant weight $e^{\pm\phi}$. Duan-Liu \cite{Duan2013} investigate the Vlasov-Poisson-Boltzmann system without angular cutoff for the case of soft potential when $1/2\le s<1$.
For smoothing effect of Boltzmann equation, since the work \cite{Alexandre2000} discover the entropy dissipation property for non-cutoff linearized Boltzmann operator, there's been many discussion in different context. See \cite{Alexandre2011,Global2019,Alexandre2013,Gressman2011a,Mouhot2007} for the dissipation estimate of collision operator, and \cite{Alexandre2011aa,Alexandre2010,Barbaroux2017a,Chen2011,Chen2012,Deng2020b} for $C^\infty$ smoothing effect for the solution to Boltzmann equation in different aspect. We refer to \cite{Chen2021,Duan2021a} for Gevrey smoothing effect for spatially inhomogeneous Boltzmann equation. Recently, the author \cite{Deng2021a,Deng2021b} establish the smoothing effect of Cauchy problem for VPB system with hard potential and VPL system for Coulomb interactions.  
These works show that the Boltzmann operator behaves locally like a fractional operator:
\begin{align*}
	Q(f,g)\sim (-\Delta_v)^sg+\text{lower order terms}.
\end{align*}
More precisely, according to the symbolic calculus developed by \cite{Global2019}, the linearized Boltzmann operator behaves essentially as 
\begin{align*}
	L \sim \<v\>^\gamma(-\Delta_v+|v\wedge\partial_v|^2+|v|^2)^s+\text{lower order terms}.
\end{align*}
We also mention \cite{Imbert2021} for global regularity of Boltzmann equation without angular cutoff. 

\subsection{Reformulation} We will reformulate the problem near Maxwellian as in \cite{Guo2002}. For this, we denote a normalized global Maxwellian $\mu$ by 
\begin{align*}
	\mu(v) = (2\pi)^{-3/2}e^{-|v|^2/2}. 
\end{align*}
Set $F_\pm(t,x,v)=\mu(v) +\mu^{1/2}f_\pm(t,x,v)$. Denote $f=(f_+,f_-)$ and $f_0=(f_{+,0},f_{-,0})$. Then the Cauchy problem \eqref{1}, \eqref{2} and \eqref{3} can be reformulated as 
\begin{equation}\label{7}
	\partial_tf_\pm + v\cdot\nabla_xf_\pm \pm \frac{1}{2}\nabla_x\phi\cdot vf_\pm  \mp\nabla_x\phi\cdot\nabla_vf_\pm \pm \nabla_x\phi\cdot v\mu^{1/2} - L_\pm f = \Gamma_{\pm}(f,f),
\end{equation}
\begin{equation}\label{8}
	-\Delta_x \phi = \int_{\Rd}(f_+-f_-)\mu^{1/2}\,dv, \quad \phi\to 0\text{ as }|x|\to\infty,
\end{equation}
with initial data 
\begin{align}\label{9}
	f_\pm(0,x,v) = f_{\pm,0}(x,v). 
\end{align}
The linearized operator $L=(L_+,L_-)$ and bilinear collision operator $\Gamma = (\Gamma_+,\Gamma_-)$ are given by 
\begin{equation*}
	L_\pm f = \mu^{-1/2}\Big(2Q(\mu,\mu^{1/2}f_\pm) + Q(\mu^{1/2}(f_\pm+f_\mp),\mu)\Big),
\end{equation*}
\begin{equation*}
	\Gamma_\pm(f,g) = \mu^{-1/2}\Big(Q(\mu^{1/2}f_\pm,\mu^{1/2}g_\pm) + Q(\mu^{1/2}f_\mp,\mu^{1/2}g_\pm)\Big).
\end{equation*}
For later use, we introduce the bilinear operator $\T$ by 
\begin{align*}
	\T_\beta(h_1,h_2) = \int_{\Rd}\int_{\S^2}B(v-v_*,\sigma)\partial_\beta(\mu^{1/2}_*)\big(h_1(v'_*)h_2(v')-h_1(v_*)h_2(v)\big)\,d\sigma dv_*, 
\end{align*}
for two scalar functions $h_1,h_2$, and in particular, we set $\T=\T_0$. Thus, 
\begin{equation*}
		L_\pm f = 2\T(\mu^{1/2},f_\pm) + \T(f_\pm+f_\mp,\mu^{1/2}),\\
	\end{equation*}
\begin{equation*}
		\Gamma_\pm(f,g) = \T(f_\pm,g_\pm) + \T(f_\mp,g_\pm).
\end{equation*}

\subsection{Notations}
Through the paper, $C$ denotes some positive constant (generally large) and $\lambda$ denotes some positive constant (generally small), where both $C$ and $\lambda$ may take different values in different lines. 
For any $v\in\Rd$, we denote $\<v\>=(1+|v|^2)^{1/2}$. For multi-indices $\alpha=(\alpha_1,\alpha_2,\alpha_3)$ and $\beta=(\beta_1,\beta_2,\beta_3)$, write 
\begin{align*}
	\partial^\alpha_\beta = \partial^{\alpha_1}_{x_1}\partial^{\alpha_2}_{x_2}\partial^{\alpha_3}_{x_3}\partial^{\beta_1}_{v_1}\partial^{\beta_2}_{v_2}\partial^{\beta_3}_{v_3}.
\end{align*}The length of $\alpha$ is $|\alpha|=\alpha_1+\alpha_2+\alpha_3$. 
The notation $a\approx b$ (resp. $a\gtrsim b$, $a\lesssim b$) for positive real function $a$, $b$ means there exists $C>0$ not depending on possible free parameters such that $C^{-1}a\le b\le Ca$ (resp. $a\ge C^{-1}b$, $a\le Cb$) on their domain. $\mathscr{S}$ denotes the Schwartz space. $\Re (a)$ means the real part of complex number $a$. $[a,b]=ab-ba$ is the commutator between operators. $\{a(v,\eta),b(v,\eta)\} =  \partial_\eta a_1\partial_va_2 - \partial_va_1\partial_\eta a_2$ is the Poisson bracket. $\Gamma=|dv|^2+|d\eta|^2$ is the admissible metric and $S(m)=S(m,\Gamma)$ is the symbol class. 
For pseudo-differential calculus, we write $(x,v)\in \Rd\times\Rd$ to be the space-velocity variable and $(y,\eta)\in \Rd\times\Rd$ to be the corresponding variable in frequency space (the variable after Fourier transform). The $L^2_{v,x}$ space is defined as $L^2_{v,x}=L^2(\R^3_v\times\R^3_x)$. $L^2(B_C)$ is the $L^2_v$ space on Euclidean ball $B_C$ of radius $C$ at the origin. For usual Sobolev space, we will use notation 
\begin{align*}
	\|f\|_{H^k_vH^m_x} = \sum_{|\beta|\le k,|\alpha|\le m}\|\partial^\alpha_\beta f\|_{L^2_{v,x}},
\end{align*}for $k,m\ge 0$. 
We also define the standard velocity-space mixed Lebesgue space $Z_1=L^2(\R^3_v;L^1(\R^3_x))$ with the norm
\begin{equation*}
	\|f\|_{Z_1} = \Big\|\|f\|_{L^1_x}\Big\|_{L^2_v}.
\end{equation*}
In this paper, we write Fourier transform and inverse Fourier transform on $x$ as 
\begin{align*}
	\widehat{f}(y) = \int_{\R^3}f(x)e^{-ix\cdot y}\,dx,\quad f^\vee(x)=\frac{1}{(2\pi)^3}\int_{\R^3}f(y)e^{-iy\cdot x}\,dx. 
\end{align*}

\smallskip
(i) As in \cite{Guo2003a}, the null space of $L$ is given by 
\begin{equation*}
	\ker L  = \text{span}\Big\{[1,0]\mu^{1/2},[0,1]\mu^{1/2},[1,1]v_i\mu^{1/2}(1\le i\le 3),[1,1]|v|^2\mu^{1/2}\Big\}. 
\end{equation*}
We denote $\PP$ to be the orthogonal projection from $L^2_v\times L^2_v$ onto $\ker L$, which is defined by 
\begin{equation}\label{10}
	\P f = \Big(a_+(t,x)[1,0]+a_-(t,x)[0,1]+v\cdot b(t,x)[1,1]+(|v|^2-3)c(t,x)[1,1]\Big)\mu^{1/2},
\end{equation}or equivalently by 
\begin{equation*}
	\PP f = \Big(a_\pm(t,x)+v\cdot b(t,x)+(|v|^2-3)c(t,x)\Big)\mu^{1/2}.
\end{equation*}
Then for given $f$, one can decompose $f$ uniquely as 
\begin{equation*}
	f = \P f+ (\I-\P)f. 
\end{equation*}
The function $a_\pm,b,c$ are given by 
\begin{align*}
	a_\pm &= (\mu^{1/2},f_\pm)_{L^2_v} = (\mu^{1/2},\PP f)_{L^2_v},\\
	b_j&= \frac{1}{2}(v_j\mu^{1/2},f_++f_-)_{L^2_v} = (v_j\mu^{1/2},\PP f)_{L^2_v},\\
	c&=\frac{1}{12}((|v|^2-3)\mu^{1/2},f_++f_-)_{L^2_v} = \frac{1}{6}((|v|^2-3)\mu^{1/2},\PP f)_{L^2_v}. 
\end{align*}

\smallskip
(ii) To describe the behavior of linearized Boltzmann collision operator, \cite{Alexandre2012} introduce the norm $\vertiii{f}$ while \cite{Gressman2011} introduce the norm $N^{s,\gamma}_l$. The work \cite{Global2019} give the pseudo-differential-type norm $\|(\tilde{a}^{1/2})^wf\|_{L^2_v}$. They are all equivalent and we list their results as follows. 

Let $\mathscr{S}'$ be the space of tempered distribution functions. $N^{s,\gamma}$ denotes the weighted geometric fractional Sobolev space 
\begin{align*}
	N^{s,\gamma} = \{f\in\mathscr{S}':|f|_{N^{s,\gamma}}<\infty\},
\end{align*}
with the anisotropic norm 
\begin{align*}
	|f|^2_{N^{s,\gamma}}:&=\|\<v\>^{\gamma/2+s}f\|^2_{L^2}+\int(\<v\>\<v'\>)^{\frac{\gamma+2s+1}{2}}\frac{(f'-f)^2}{d(v,v')^{d+2s}}\1_{d(v,v')\le 1},
\end{align*}with $d(v,v'):=\sqrt{|v-v'|^2+\frac{1}{4}(|v|^2-|v'|^2)^2}$. 
In order to describe the velocity weight $\<v\>$, as in \cite{Gressman2011}, we define 
\begin{align*}
	|f|^2_{N^{s,\gamma}_l} = |w_l\<v\>^{\gamma/2+s}f|^2_{L^2_v}+\int_{\Rd}dv\,w_l\<v\>^{\gamma+2s+1}\int_{\Rd}dv'\,\frac{(f'-f)^2}{d(v,v')^{d+2s}}\1_{d(v,v')\le 1},
\end{align*}which turns out to be equivalent with $|w_lf|_{N^{s,\gamma}}$. This follows from the proof of Proposition 5.1 in \cite{Gressman2011} since the $\psi$ therein has a nice support. 

\smallskip
On the other hand, as in \cite{Alexandre2012}, we define 
\begin{align*}
	\vertiii{f}^2:&=\int B(v-v_*,\sigma)\Big(\mu_*(f'-f)^2+f^2_*((\mu')^{1/2}-\mu^{1/2})^2\Big)\,d\sigma dv_*dv,
\end{align*}

\smallskip
For pseudo-differential calculus as in \cite{Global2019}, one may refer to the appendix of \cite{Deng2021b} as well as \cite{Lerner2010} for more information. Let $\Gamma=|dv|^2+|d\eta|^2$ be an admissible metric. We say that
$a\in S(\Gamma)=S(M,\Gamma)$, if for $\alpha,\beta\in \N^d$, $v,\eta\in\Rd$,
\begin{align*}
	|\partial^\alpha_v\partial^\beta_\eta a(v,\eta,\xi)|\le C_{\alpha,\beta}M,
\end{align*}with $C_{\alpha,\beta}$ a constant depending only on $\alpha$ and $\beta$. The space $S(M,\Gamma)$ endowed with the seminorms
\begin{align*}
	\|a\|_{k;S(M,\Gamma)} = \max_{0\le|\alpha|+|\beta|\le k}\sup_{(v,\eta)\in\R^{2d}}
	|M(v,\eta)^{-1}\partial^\alpha_v\partial^\beta_\eta a(v,\eta,\xi)|,
\end{align*}becomes a Fr\'{e}chet space.
Define
\begin{align}\label{11a}
	\tilde{a}(v,\eta):=\<v\>^\gamma(1+|\eta|^2+|\eta\wedge v|^2+|v|^2)^s+K_0\<v\>^{\gamma+2s}
\end{align}
to be a $\Gamma$-admissible weight, where $K_0>0$ is chosen as the following. 
Applying theorem 4.2 in \cite{Global2019} and Lemma 2.1 and 2.2 in \cite{Deng2020a}, there exists $K_0>0$ such that the Weyl quantization $\tilde{a}^w:H(\tilde{a}c)\to H(c)$ and $(\tilde{a}^{1/2})^w:H(\tilde{a}^{1/2}c)\to H(c)$ are invertible, with $c$ being any $\Gamma$-admissible metric. The weighted Sobolev space $H(c)$ is defined by 
 $H(M,\Gamma):=\{u\in\mathscr{S}':\|u\|_{H(M,\Gamma)}<\infty\}$, where
\begin{align*}
	\|u\|_{H(M,\Gamma)}:=\int M(Y)^2\|\varphi^w_Yu\|^2_{L^2}|\Gamma_Y|^{1/2}\,dY<\infty,
\end{align*}and $(\varphi_Y)_{Y\in\R^{2d}}$ is any uniformly confined family of symbols which is a partition of unity. If $a\in S(M)$ is a isomorphism from $H(M')$ to $H(M'M^{-1})$, then $(a^wu,a^wv)$ is an equivalent Hilbertian structure on $H(M)$. 
 The symbol $\tilde{a}$ is real and gives the formal self-adjointness of Weyl quantization $\tilde{a}^w$. By the invertibility of $(\tilde{a}^{1/2})^w$, we have equivalence 
\begin{align*}
	\|(\tilde{a}^{1/2})^w(\cdot)\|_{L^2_v}\approx\|\cdot\|_{H(\tilde{a}^{1/2})_v},
\end{align*}and hence we will equip $H(\tilde{a}^{1/2})_v$ with norm $\|(\tilde{a}^{1/2})^w(\cdot)\|_{L^2_v}$; see \cite[Appendix]{Deng2021b}. Also,  $$\|w_l(\tilde{a}^{1/2})^w(\cdot)\|_{L^2_v}\approx\|(\tilde{a}^{1/2})^ww_l(\cdot)\|_{L^2_v}$$ due to Lemma \ref{inverse_bounded_lemma}. 

\medskip
The three norms defined above are equivalent since for $l\in\R$, 
\begin{align*}
	\|(\tilde{a}^{1/2})^wf\|^2_{L^2_v}\approx\vertiii{f}^2\approx|f|^2_{N^{s,\gamma}}\approx (-Lf,f)_{L^2_v}+\|\<v\>^lf\|_{L^2_v},
\end{align*}which follows from \cite[eq. (2.13) and (2.15)]{Gressman2011}, \cite[Proposition 2.1]{Alexandre2012} and  \cite[Theorem 1.2]{Global2019}. An important result from \cite[Section 3]{Deng2020a} is that 
\begin{align*}
	L\in S(\tilde{a}),
\end{align*}where $S(\tilde{a})=S(\tilde{a},\Gamma)$ is the pseudo-differential symbol class; see \cite[Chap. 2]{Lerner2010}. This implies that 
\begin{align*}
	|(Lf,f)_{L^2_v}|\lesssim \|(\tilde{a}^{1/2})^wf\|_{L^2}^2. 
\end{align*}
For brevity, we denote dissipation norms 
\begin{align*}
	\|f\|_{L^2_D} = \|(\tilde{a}^{1/2})^wf\|_{L^2_v}, \quad \|f\|_{L^2_xL^2_D} = \|(\tilde{a}^{1/2})^wf\|_{L^2_xL^2_v}.
\end{align*}

In order to extract the smoothing effect on $x$, we define a symbol $\tilde{b}$ by 
\begin{align}\label{b}
	\tilde{b}(v,y) = \<v\>^{l_0}|y|^{\delta_1}, 
\end{align}
where $l_0,\delta_1$ are defined by \eqref{106b}. This symbol will help us find out the smoothing rate on spatial variable.

\subsection{Main results}
To state the result of the paper, we let $K\ge 0$ to be the total order of derivatives on $v,x$ and define the velocity weight function $w_l$ for any $l\in \R$ by 
\begin{align}\label{w}
	w_{l}(\alpha,\beta) = \<v\>^{l-p|\alpha|-q|\beta|+Kp},
\end{align}
where $p,q>0$ are given by 
\begin{align*}
	p= -\gamma-\frac{2\gamma(1-s)}{s}+1,\quad q=-\frac{2\gamma}{s}+1. 
\end{align*}
For brevity, we write $w_l=w_l(0,0)$ and $w(|\al|,|\beta|)=w(\al,\beta)$. 
In order to extract the smoothing effect, as in \cite{Deng2021b}, we define a useful coefficient  
\begin{equation}\label{psi}
\psi_k=\left\{\begin{aligned}
	1, \text{  if $k\le 0$},\\
	\psi^k, \text{ if $k> 0$}, 
\end{aligned}\right.
\end{equation}
where $\psi = 1$ in Theorem \ref{thm21} (for existence) and $\psi = t^N$ with $N=N(\alpha)>0$ large in Theorem \ref{main2} and Section \ref{sec5} (for regularity).
When considering $\psi=t^N$ in proving regularity, we always assume $0\le t\le 1$, since regularity is a local property. In any case, we have $\psi\le 1$. 
The motivation of this weight is that when $\psi=t^N$, the initial high-order energy functional defined in \eqref{Defe} would vanish at the initial time $t=0$. This shows that high-order energy for any $t>0$ is controlled by low-order initial energy and we obtain the regularizing effect.

Corresponding to given $f=f(t,x,v)$, we introduce the instant energy functional $\E_{K,l}(t)$
satisfying the equivalent relation
\begin{align}\label{Defe}
	\E_{K,l}(t)\notag &\approx \sum_{|\alpha|\le K}\|\psi_{|\alpha|-4}\partial^\alpha E\|^2_{L^2_x}+\sum_{|\alpha|\le K}\|\psi_{|\alpha|-4}\partial^\alpha\P f\|^2_{L^2_{v,x}}\\
	&\qquad+\sum_{\substack{|\alpha|+|\beta|\le K}}\|\psi_{|\alpha|+|\beta|-4}w_l(\al,\beta)\partial^\alpha_\beta(\I-\P) f\|^2_{L^2_{v,x}}.
\end{align}
The precise definition will be given in \eqref{EE}. 
Also, we define the dissipation rate functional $\D_{K,l}$ by 
\begin{align}\label{Defd}
	\D_{K,l}(t) \notag&= \sum_{|\alpha|\le K-1}\|\psi_{|\alpha|-4}\partial^\alpha E\|^2_{L^2_x}+\sum_{1\le|\alpha|\le K}\|\psi_{|\alpha|-4}\partial^\alpha\P f\|^2_{L^2_{v,x}}\\
	&\qquad+\sum_{\substack{|\alpha|+|\beta|\le K}}\|\psi_{|\alpha|+|\beta|-4}(\tilde{a}^{1/2})^ww_l(\al,\beta)\partial^\alpha_\beta(\I-\P) f\|^2_{L^2_{v,x}}.
\end{align}
Here $E=E(t,x)$ is determined by $f(t,x,v)$ in terms of $E=-\nabla_x\phi$ and \eqref{8}. Notice that one can change the order of $(\tilde{a}^{1/2})^w$ and $w_l(\al,\beta)$ due to Lemma \ref{inverse_bounded_lemma}. 
 The main result of this paper is stated as follows.


\begin{Thm}\label{main2}Let $-\frac{3}{2}-2s<\gamma\le -2s$, $0<s<1$, $0<\tau<T\le \infty$ and $l\ge 0$. 
For any $K\ge 4$ and multi-indices $|\alpha|+|\beta|\le K$, assume $\psi=t^N$ with $N>0$ large when $|\alpha|\le 4$ and $N=N(\alpha)>0$ defined by \eqref{106a} when $|\alpha|>4$. 
Let $(f,E)$ be the solution to \eqref{7}, \eqref{8} and \eqref{9} satisfying that 
 for $n>0$, there exists $C_n>0$ such that
\begin{align}\label{19aabb}
	\sup_{0\le t\le T}\|\<v\>^{n}f(t)\|_{L^2_{v,x}} \le C_n <\infty.
\end{align}
Then the followings hold true.

\smallskip\noindent
(1) If 
\begin{align}\label{15aa}
	\epsilon_1 = (\E_{4,l}(0))^{1/2}
\end{align}
is sufficiently small, then for $|\alpha|+|\beta|\le K$, $T<\infty$,
\begin{align}\label{19a}
	\sup_{\tau\le t\le T}\Big(\|w_l(\al,\beta)\partial^\alpha_\beta f\|^2_{L^2_{v,x}}+\|\partial^\alpha\nabla_x\phi\|_{L^2_x}^2\Big)\le \epsilon^2_1C_{\tau,T,K,l},
\end{align}
where $C_{\tau,T,K,l}>0$ depends on $\tau,T,K,l$.

\smallskip\noindent (2)
There exists $C_{K,l}>0$ such that if 
	$\E_{4,C_{K,l}}(0)$
is sufficiently small, then for $|\alpha|+|\beta|\le K$, $k\ge 0$, $T<\infty$, we have 
\begin{align}\label{19b}
	\sup_{\tau\le t\le T}\Big(\|w_l(\al,\beta)\partial^\alpha_\beta\partial^{k}_tf\|^2_{L^2_{v,x}}+\|\partial^\alpha\partial^{k}_t\nabla_x\phi\|_{L^2_x}^2\Big)\le C_{\tau,T,k,K,l}<\infty,
\end{align}where $C_{\tau,T,k,K,l}$ is a constant depending on $\tau$, $T$, $k$, $K$, $l$. 
Consequently, $f\in C^\infty(\R^+_t\times\R^3_x\times\R^3_v)$. 

\smallskip\noindent
(3) If additionally, the initial data satisfies that 
\begin{align}\label{19aa}
	\epsilon_0 = (\E_{4,l+l_1}(0))^{1/2}+\|w_{l_2}f_0\|_{Z_1}+\|E_0\|_{L^1_x},
\end{align}
is sufficiently small, where $l>\max\{-\frac{3(\gamma+2s)}{4},K\}$,  $l_1=-\frac{5(\gamma+2s)}{4(1-p)}$, $l_2>-\frac{5(\gamma+2s)}{4}$ are constants. Then the constants in \eqref{19a} and \eqref{19b} can be chosen independent of $T$ and $T$ can take the value $\infty$.

\smallskip \noindent
(4) Suppose that there exists sufficiently large $C_{K,l}>0$ such that if the solution $(f,E)$ satisfies that, 
\begin{align}\label{19aab}
	\epsilon_{0,K,l} = (\E_{4,C_{K,l}+l_1}(0))^{1/2}+\|w_{l_2}f_0\|_{Z_1}+\|E_0\|_{L^1_x}
\end{align}is sufficiently small. Then the condition \eqref{19aabb} can be removed and we have \eqref{19a} and \eqref{19b}. Also, the constants in \eqref{19a} and \eqref{19b} can be chosen independent of $T$ and $T$ can take the value $\infty$.


\end{Thm}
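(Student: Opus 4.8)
The plan is to prove all four parts by a time-weighted energy method, treating part (1) as the core a priori estimate and then bootstrapping. First I would fix $K\ge 4$ and set up the energy functional $\E_{K,l}(t)$ and dissipation $\D_{K,l}(t)$ precisely as in \eqref{Defe}--\eqref{Defd}, with the weight $w_l(\al,\beta)$ from \eqref{w} chosen so that the loss of velocity weight incurred by commuting $\partial_\beta$ through the $v\cdot\nabla_x$ transport term and through the collision operator is absorbed by the gain $\<v\>^{\gamma+2s}$ coming from the dissipation norm $\|\cdot\|_{L^2_D}$; this is exactly where the soft-potential restriction $-\frac32-2s<\gamma\le -2s$ and the specific values of $p,q$ enter. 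The first substantive step is the macroscopic estimate: using the local conservation laws satisfied by $(a_\pm,b,c)$ and the Poisson equation \eqref{8}, one shows that $\sum_{|\al|\le K}\|\psi_{|\al|-4}\partial^\al\P f\|^2_{L^2}+\sum_{|\al|\le K}\|\psi_{|\al|-4}\partial^\al E\|^2_{L^2}$ is controlled, up to the microscopic part $(\I-\P)f$ and a small energy prefactor, by its own dissipation; this is standard (Guo's thirteen-moment / macroscopic argument) and I would cite \cite{Guo2002,Duan2013} for the structure, adapting the bookkeeping to the $\psi_k$-weights.

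The second and main step is the microscopic/high-order estimate. For each $|\al|+|\beta|\le K$ I would apply $\psi_{|\al|+|\beta|-4}w_l(\al,\beta)\partial^\al_\beta$ to \eqref{7}, take the $L^2_{v,x}$ inner product with the same quantity, and use coercivity of $L$ on $(\I-\P)f$ in the $\|\cdot\|_{L^2_D}$ norm together with the symbol bound $L\in S(\tilde a)$ and the pseudodifferential upper bounds on $\Gamma$ recalled in the excerpt. The terms to control are: (i) the commutator $[\partial_\beta, v\cdot\nabla_x]$, which produces $\partial_{\beta'}$ with $|\beta'|=|\beta|-1$ and one more $x$-derivative but costs a power of $\<v\>$ — absorbed by the weight design and a small fraction of the dissipation after a Young-type splitting; (ii) the nonlinear field terms $\nabla_x\phi\cdot v f_\pm$ and $\nabla_x\phi\cdot\nabla_v f_\pm$, which I would estimate by Sobolev embedding in $x$ (using $K\ge 4>3/2$ so $H^{K-2}_x\hookrightarrow L^\infty_x$) together with the elliptic gain $\|\nabla_x\phi\|\lesssim\|(f_+-f_-)\|$; (iii) the trilinear term $(\psi w_l\partial^\al_\beta\Gamma(f,f),\psi w_l\partial^\al_\beta(\I-\P)f)$, estimated via the non-cutoff trilinear estimate expressed in the $N^{s,\gamma}_l$/$L^2_D$ norm from \cite{Gressman2011,Alexandre2012,Global2019} and a standard commutator-friendly Leibniz expansion, yielding a bound of the form $\sqrt{\E_{K,l}}\,\D_{K,l}$. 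Crucially, because $\psi=t^N$ vanishes at $t=0$ with $N=N(\alpha)$ tuned by \eqref{106a}, every time a derivative count forces a higher-order (hence higher-$\psi$-power) term on the right, the extra factor of $t$ (equivalently $\dot\psi/\psi$ bounded by $N/t$) is compensated — so $\frac{d}{dt}\E_{K,l}+\lambda\D_{K,l}\le C(\sqrt{\E_{K,l}}+\sqrt{\E_{4,l}})\D_{K,l}+C\D_{\text{lower}}$, and an induction on $K$ (base case $K=4$, where all $\psi_k=\psi$ or $1$) closes the estimate provided $\E_{4,l}(0)$ is small; the extra moment bound \eqref{19aabb} is what lets the lower-order terms with negative velocity weight be reabsorbed at each induction stage without tracking the $Z_1$ decay. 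This gives part (1); part (2) follows by additionally differentiating the equation in $t$, noting $\partial_t f$ satisfies an equation of the same type with source built from already-controlled quantities, and iterating in $k$, then combining with elliptic regularity for $\phi$ and a Sobolev embedding to conclude $f\in C^\infty(\R^+_t\times\R^3_x\times\R^3_v)$.

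For parts (3) and (4) the new ingredient is time decay: I would run the linearized-decay / $Z_1$-energy argument of \cite{Duan2011,Duan2013}, introducing the extra weight $w_{l_1}$ and the symbol $\tilde b(v,y)=\<v\>^{l_0}|y|^{\delta_1}$ from \eqref{b} to extract an $x$-smoothing decay rate, to show that the low-order energy decays polynomially in $t$; feeding this back into the high-order estimate makes the time-integrals of $\D_{\text{lower}}$ finite uniformly in $T$, so the constants become $T$-independent and $T=\infty$ is admissible. Part (4) removes the crutch \eqref{19aabb}: instead of assuming a moment bound, one proves it by a separate, closed energy estimate with very high velocity weight $C_{K,l}+l_1$ (this is self-contained because high moments of the linearized VPB system near Maxwellian propagate, again by the weight-gain mechanism), so \eqref{19aabb} becomes a consequence rather than a hypothesis. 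The main obstacle throughout is step two's bookkeeping of the interplay between the $\psi=t^N$ weights and the velocity weights $w_l(\al,\beta)$: one must choose $N(\alpha)$ in \eqref{106a} and the exponents $p,q,l_0,\delta_1$ so that simultaneously (a) the transport commutator loss, (b) the collision-operator weight loss for soft $\gamma$, and (c) the loss of one power of $t$ per extra derivative all cancel — getting these inequalities compatible for the full range $0<s<1$ (rather than only $s\ge 1/2$) is the crux, and is where the present paper improves on \cite{Duan2013} and extends \cite{Deng2021b}.
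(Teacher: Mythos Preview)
Your outline has the right shape for the energy method, but it misses the central technical device of the paper and misplaces the role of the symbol $\tilde b$.

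The genuine gap is in how you handle the term coming from $\partial_t\psi_{|\alpha|+|\beta|-4}$. When you differentiate the time weight you produce $\|\psi_{|\alpha|+|\beta|-4-\frac{1}{2N}}w_l(\al,\beta)\partial^\al_\beta f\|^2_{L^2_{v,x}}$, which carries \emph{less} $t$-vanishing than $\E_{K,l}$, not more; your sentence ``the extra factor of $t$ \dots\ is compensated'' has the direction backwards and gives no mechanism. The paper closes this by a two-stage interpolation (Lemma~\ref{Lem33}): first, in the $x$-frequency variable $y$, one writes $\psi_{|\alpha|-4-\frac{1}{2N}}\lesssim \delta\,\tilde b^{1/2}\psi_{|\alpha|-4}+C_\delta\<v\>^{C}|y|^{-|\alpha|}$ with $\tilde b=\<v\>^{l_0}|y|^{\delta_1}$, so the lost $t$-power is traded for a small gain of $x$-regularity plus a zero-$x$-derivative remainder; second, a parallel interpolation with $\tilde a^{1/2}$ handles the $\beta$-part and produces the remainder $\|\<v\>^{C_{K,l}}f\|_{L^2_{v,x}}$, which is exactly what hypothesis \eqref{19aabb} bounds. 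The new $\tilde b^{1/2}$-term is then controlled by introducing the auxiliary symbol $\theta(v,\eta)=\<v\>^{l_0}|y|^{-1-\delta_2}y\cdot\eta\,\chi$ and computing the Poisson bracket $\{\theta,v\cdot y\}=\tilde b+R$ with $R\in S(\tilde a)$; this converts $\|\tilde b^{1/2}\widehat g\|^2$ into $\Re(v\cdot\nabla_x g,(\theta^w\widehat g)^\vee)$, and the transport term $v\cdot\nabla_x g$ is replaced using the equation \eqref{7}, yielding a total time derivative plus terms bounded by $\E_{K,l}+\D_{K,l}$. None of this appears in your sketch, and without it the estimate does not close: an induction on $K$ alone cannot absorb the $\psi$-loss, because the problematic term has the \emph{same} derivative order $|\alpha|+|\beta|$ as the energy.

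Relatedly, you invoke $\tilde b$ only in parts (3)--(4) ``to extract an $x$-smoothing decay rate''; in the paper $\tilde b$ plays no role in time decay. Parts (3)--(4) are much simpler than you describe: one uses the global existence Theorem~\ref{thm21} (the Duan--Liu decay estimates) to get a uniform-in-$t$ bound on $\E_{4,l}$, then reapplies the local regularity estimate of Theorem~\ref{lem51} on each interval $[\tau_0,\tau_0+t_0]$ with constants independent of $\tau_0$. Part (4) follows because under \eqref{19aab} the existence theorem itself supplies \eqref{19aabb}. So the $Z_1$/linearized-decay machinery is used only through the black-box existence theorem, not re-derived in the regularity argument.
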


Notice that \eqref{19a} gives the smoothing effect on velocity and spatial variable. If we assume the initial data has more velocity decay, then we have the smoothing effect on time variable as \eqref{19b}. If we assume the initial data as in the existence theory (cf. Theorem \ref{thm21}), then the constants can be independent of time $T$. Moreover, if we assume higher velocity decay, then we can derive \eqref{19aabb} from existence theory instead of assuming it at the beginning.  
These results show that the solutions to the Vlasov-Poisson-Boltzmann system enjoy a similar smoothing effect to the Boltzmann equation; see \cite{Alexandre2010,Chen2012}. That is, whenever the initial data has algebraic decay in any order, the solution $f$ is smooth in $(t,x,v)$ for any positive time $t$.

\smallskip
In what follows let us point out several technical points in the proof of Theorem \ref{main2}. 
We use $K\ge 4$ because $H^2_x(\R^3)$ is a Banach algebra when controlling \eqref{12}, where there has already second derivatives on $v$, and $H^2_x$ is useful to control the spatial variable when dealing with the trilinear estimate. 
The next technical point concerns the choice of $\psi=t^N$ in Theorem \ref{main2} and the usage of $\tilde{b}$, $\psi_{|\alpha|+|\beta|-4}$ is Section \ref{sec5}. Recall \eqref{psi} for definition of $\psi_k$. 
Whenever $|\alpha|+|\beta|> 4$, $\psi_{|\alpha|+|\beta|-4} = t^{N(|\alpha|+|\beta|-4)}$ is equal to $0$ at $t=0$. Plugging this into energy estimate, the higher order derivatives are canceled at $t=0$ and one can control the higher order instant energy by lower order initial data. Then one can easily deduce the smoothing effect locally in time. By using the global energy control obtained in Theorem \ref{thm21}, the local-in-time regularity becomes global-in-time regularity. Notice that we use $-4$ to eliminate the index arising from Sobolev embedding $\|\cdot\|_{H^2_vL^\infty_x}\lesssim \|\cdot\|_{H^2_vH^2_x}$, where the latter has derivatives of forth order. However, after adding $\psi_{|\alpha|+|\beta|-4}$, one need to deal with the term 
\begin{align}\label{22}
	\big(\partial_t(\psi_{|\alpha|+|\beta|-4})\partial^\alpha_\beta f,e^{\pm\phi}w^2_l(\al,\beta)\partial^\alpha_\beta f\big)_{L^2_{v,x}}.
\end{align}
This is where we need $\tilde{b}$ given in \eqref{b}. By choosing $N=N(\alpha)$ properly, one has interpolation 
\begin{align*}
	\psi_{|\alpha|-4-\frac{1}{2N}}
	&\lesssim \delta\,\tilde{b}^{1/2}+C_{0,\delta}\<v\>^{\frac{-l_0|\alpha|}{\delta_1}}|y|^{-|\alpha|}.
\end{align*} 
The first term can be absorbed while the second term eliminates $\alpha$ derivatives on $x$. Applying a similar interpolation on $v$ with $\tilde{a}$, we can control \eqref{22} by a high-order term and an algebraic decay term:
\begin{align*}
	\delta^2\|\psi_{|\alpha|+|\beta|-4}\tilde{b}^{1/2}w_l(\al,\beta)(\partial^\alpha_\beta f)^\wedge(v,y)\|^2_{L^2_{v,y}}+\delta^2\D_{K,l}+C_\delta\|\<v\>^{C_{K,l}}f\|^2_{L^2_{v,x}}.
\end{align*}
Defining $\theta$ by \eqref{107}, using the equation \eqref{7}-\eqref{9} and Poisson bracket $\{v\cdot y,\theta\}$, one can control the high-order term by using functional $\E_{K,l}$ and $\D_{K,l}$, where $\delta_1$ in $\tilde{b}$ should be chosen properly. Hence, we can obtain a closed energy estimate locally. 
Here, when dealing with soft potential, there occurs an algebraic decay term in $v$: $\|\<v\>^{C_{K,l}}f\|_{L^2_{v,x}}$ and we need to assume such norms are bounded initially, as observed in the Boltzmann equation; cf. \cite{Chen2012}. 
After obtaining a local regularity, we can combine it with the global energy control from existence theory; cf. \cite{Duan2013}. Then one can deduce the regularity globally in time.

\smallskip

The rest of the paper is arranged as follows. In Section \ref{Sec2}, we present some basic Lemmas for existence theory, estimate on $L, \Gamma$, and some tricks in energy estimates. In Section \ref{sec5}, we present the proof for regularity. 

\section{Preliminaries}\label{Sec2}

In this section, we list several basic lemmas corresponding to the existence theory of Vlasov-Poisson-Boltzmann system, linearized Boltzmann collision term $L_\pm$ and the bilinear Boltzmann collision operator $\Gamma_\pm$. 
The following Theorem comes from \cite[Theorem 1.1]{Duan2013}, except that we improve the index $K\ge 8$ to $K\ge 4$ and $1/2\le s<1$ to $0<s<1$. 

\begin{Thm}[\cite{Duan2013}, Theorem 1.1]
	\label{thm21}
	Let $-\frac{3}{2}-2s<\gamma\le -2s$, $0<s<1$, $K\ge 4$, $p\in(\frac{1}{2},1)$. Assume $l\ge 0$, $l>-\frac{3(\gamma+2s)}{4}$, $l_1=-\frac{5(\gamma+2s)}{4(1-p)}$ and  $f_0(x,v)=(f_{0,+}(x,v),f_{0,-}(x,v))$ satisfying $F_\pm(0,x,v)=\mu(v)+\sqrt{\mu(v)}f_{0,\pm}(x,v)\ge 0$. Assume $\psi=1$. 
	If 
	\begin{align}\label{20a}
		\epsilon_0 = (\E_{K,l+l_1}(0))^{1/2}+\|w_{l_2}f_0\|_{Z_1}+\|E_0\|_{L^1_x},
	\end{align}
	is sufficiently small, where $E_0(x)=E(0,x)$, $l_2>-\frac{5(\gamma+2s)}{4}$ is a constant. Then there exists a unique global solution $f(t,x,v)$ to the Cauchy problem \eqref{7}-\eqref{9} of the Vlasov-Poisson-Boltzmann system such that $F_\pm(t,x,v)=\mu(v)+(\mu(v))^{1/2}f_\pm(t,x,v)\ge 0$ and 
	\begin{equation}\label{15a}\begin{aligned}
			\E_{K,l+l_1}(t)&\lesssim \epsilon_0^2,\\
			\E_{K,l}(t)&\lesssim \epsilon_0^2(1+t)^{-\frac{3}{2}},\\
			\E^h_{K,l}(t)&\lesssim \epsilon_0^2(1+t)^{-\frac{3}{2}-p},
		\end{aligned}
	\end{equation}
	for any $t\ge 0$. 
\end{Thm}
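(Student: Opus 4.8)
\textbf{Proof proposal for Theorem~\ref{thm21}.} The plan is to follow the energy framework of Duan--Liu \cite{Duan2013}, modified to accommodate the weaker assumptions $K\ge 4$ and $0<s<1$, which together with $p\in(1/2,1)$ are the only changes from the cited statement. First I would set up the time-weighted energy functionals $\E_{K,l}$, $\D_{K,l}$ (here with $\psi=1$, so no regularization weight intervenes) together with the high-order version $\E^h_{K,l}$ and its dissipation rate, and run the standard macroscopic--microscopic decomposition $f=\P f+(\I-\P)f$. The microscopic part is controlled by the coercivity of $L$, i.e. $(-L(\I-\P)f,(\I-\P)f)_{L^2_v}\gtrsim \|(\I-\P)f\|^2_{L^2_D}$, via the equivalence $\|(\tilde a^{1/2})^w\cdot\|_{L^2_v}\approx|\cdot|_{N^{s,\gamma}}\approx(-L\cdot,\cdot)+\|\<v\>^l\cdot\|$ recorded in the excerpt; the macroscopic part $(a_\pm,b,c)$ is recovered through the usual local conservation laws and the elliptic estimate for $\phi$ coming from \eqref{8}, using that $E=-\nabla_x\phi$ gains one spatial derivative. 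The bilinear term $\Gamma$ and the nonlinear streaming terms $\pm\frac12\nabla_x\phi\cdot v f_\pm$, $\mp\nabla_x\phi\cdot\nabla_v f_\pm$ are handled by the trilinear estimates for $\T$ and the Banach-algebra property of $H^2_x(\R^3)$ — this is precisely where $K\ge 4$ enters, since after taking up to two $v$-derivatives one still needs two spare $x$-derivatives for the $L^\infty_x$ embedding.

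The key steps, in order, would be: (i) establish the weighted estimates on $L_\pm$ and $\Gamma_\pm$ in the norms $N^{s,\gamma}_l$ (equivalently $\|(\tilde a^{1/2})^w w_l\cdot\|_{L^2_v}$), including the commutator bounds needed to move $w_l$ past $(\tilde a^{1/2})^w$ (Lemma~\ref{inverse_bounded_lemma}); (ii) derive the zeroth-order (in $x$) and pure-$x$-derivative energy inequality, producing the instant energy $\E$ and the dissipation $\D$ with the loss of weight in $w_l(\al,\beta)$ built in through $p,q$ to absorb the velocity growth from soft potential; (iii) derive the microscopic energy inequality for $\partial^\alpha_\beta(\I-\P)f$, carefully tracking the extra weight $\<v\>^{-p}$ lost per $x$-derivative and $\<v\>^{-q}$ per $v$-derivative so the dissipation still dominates; (iv) use the macroscopic estimates (the ``interactive'' functional, built from $\partial_t b$, $\partial_t c$ tested against suitable moments) to recover dissipation of $\nabla_x(a_\pm,b,c)$ and of $E$; (v) close the a priori estimate $\frac{d}{dt}\E_{K,l}+\lambda\D_{K,l}\le C\sqrt{\E_{K,l}}\,\D_{K,l}$, so that smallness of $\epsilon_0$ propagates; (vi) upgrade to the time-decay statements in \eqref{15a} by the standard $Z_1$/$L^1_x$ interpolation-with-dissipation argument of Duan--Liu, using the extra weight $l_1$ and the $Z_1$, $L^1_x$ smallness of the data to get the $(1+t)^{-3/2}$ and $(1+t)^{-3/2-p}$ rates. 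Local existence plus this a priori bound give the global solution, and nonnegativity $F_\pm\ge 0$ is preserved as in \cite{Duan2013} (or \cite{Guo2002}).

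The main obstacle I expect is step~(iii)--(iv) in the regime $0<s<1/2$, which is not covered by the original \cite{Duan2013} argument: for small $s$ the fractional-diffusion strength in $v$ is weak while $\gamma$ may be quite negative ($-\frac32<\gamma\le -2s$ allows $\gamma$ close to $-\frac32$), so the velocity weights must be chosen very carefully and the trilinear estimates for $\Gamma$ need the refined $N^{s,\gamma}$-type bounds rather than the cruder ones available only for $s\ge 1/2$. Concretely, one must verify that the weight exponents $p=-\gamma-\frac{2\gamma(1-s)}{s}+1$ and $q=-\frac{2\gamma}{s}+1$ still make every commutator and every nonlinear term absorbable — in particular that the loss $\<v\>^{\gamma}$ from the kernel, combined with the interpolation against $\tilde a^{1/2}\sim\<v\>^{\gamma/2+s}(\cdots)^{s/2}$, leaves a genuine gain. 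Reducing $K$ from $8$ to $4$ is comparatively soft: it only requires checking that the product estimates never need more than two derivatives landing on a single factor, which $H^2_x$ being an algebra guarantees, and that the macroscopic estimates, which are finite-dimensional in $v$, close with only $K$ total derivatives.
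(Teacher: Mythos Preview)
Your overall strategy---follow Duan--Liu \cite{Duan2013} and isolate the modifications needed for $K\ge 4$ and $0<s<1$---matches the paper's approach, and your treatment of the $K\ge 4$ reduction (via the $H^2_x$ algebra property feeding into the product/trilinear estimates, which is exactly what Corollary~\ref{Coro1} packages) is correct. However, you have misdiagnosed the source of the restriction $s\ge 1/2$ in \cite{Duan2013}, and your proposed remedy would not close the gap.

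The restriction does \emph{not} come from the trilinear $\Gamma$ estimates: those (Lemma~\ref{Lem26a} here, from \cite{Duan2013,Strain2012}) already hold for the full range $0<s<1$. The actual bottleneck is in \cite[Lemmas~3.6 and~3.7]{Duan2013}, which control the Poisson coupling term $\nabla_x\phi\cdot\nabla_v f_\pm$ by using the Fourier transform on $v$ to estimate $\nabla_v$; that argument needs $s\ge 1/2$ so that the dissipation norm $L^2_D$ controls a full $H^1_v$ derivative after the weight loss. Your step (iii) discussion of weight exponents $p,q$ and ``refined $N^{s,\gamma}$-type bounds'' for $\Gamma$ is aimed at the wrong term.

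The paper's fix is Lemma~\ref{Lem27}: instead of a Fourier argument, one uses the symbol-level interpolation
\[
\|f\|_{H^1_v}\lesssim \|\<v\>^k f\|_{H^s_v}+\|\<v\>^{-ks/(1-s)}f\|_{H^{1+s}_v},
\]
valid for any $0<s<1$, together with the specific choice of $p,q$ in $w_l(\alpha,\beta)$ so that the two pieces on the right are each bounded by $\D_{K,l}^{1/2}$. This is the missing idea in your proposal. You should also note the minor change in the decay argument: the paper replaces $\|(1+|x|)\rho_0\|_{L^1}$ in \cite{Duan2013} by $\|E_0\|_{L^1_x}$, using only $\|\widehat{E_0}\|_{L^\infty_y}\le\|E_0\|_{L^1_x}$ at the one place it enters.
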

Here the instant energy functional $\E^h_{K,l}$ is given by 
\begin{align*}
	\E^h_{K,l}(t)\notag &\approx \sum_{|\alpha|\le K}\|\partial^\alpha E(t)\|^2_{L^2_x}+\sum_{1\le|\alpha|\le K}\|\partial^\alpha\P f\|^2_{L^2_{v,x}}\\
	&\qquad+\sum_{\substack{|\alpha|+|\beta|\le K}}\|w_l(\al,\beta)\partial^\alpha_\beta(\I-\P) f\|^2_{L^2_{v,x}},
\end{align*}
and we assume $\psi=1$ in this Theorem. 
\begin{proof}
	The proof is the similar to \cite[Theorem 1.1]{Duan2013} and we only illustrate the difference. The first one is that we use $\|E_0\|_{L^1_x}$ in \eqref{20a} instead of $\|(1+|x|)\rho_0\|_{L^1}$, where $\rho_0=\int_{\R^3}(f_+(0)-f_-(0))\mu^{1/2}\,dv$. The only place involving this term is estimate (4.25) in \cite[Theorem 1.1]{Duan2013}. One can use instead
	\begin{align*}
		\|\widehat{E_0}(y)\|_{L^\infty_y}\le \|E_0\|_{L^1_x},
	\end{align*}
and hence, in \eqref{20a}, we can use $\|E_0\|_{L^1_x}$ instead. 

\smallskip 
The second difference is that we use $K\ge 4$ instead of $K\ge 8$. 
This is because, in Corollary \ref{Coro1} below, we only require $K\ge 4$. Replacing estimate in \cite[Theorem 7.1, eq. (7.11)-(7.12)]{Duan2013} by Corollary \ref{Coro1} below, we can use such index on $K$ instead.

\smallskip 
The third difference is to improve index from $\frac{1}{2}\le s<1$ to $0<s<1$. The work \cite{Duan2013} is restricted to $\frac{1}{2}\le s<1$ because of \cite[Lemma 3.6 and 3.7]{Duan2013}, where the authors used Fourier transform on $v\in\R^3$ to control the gradient $\na_v$. Using Lemma \ref{Lem27} below instead, we are able to obtain the result for $0<s<1$. 
Then following the same proof of \cite[Lemma 7.1 and Theorem 1.1]{Duan2013}, we complete the proof of Theorem \ref{thm21}. 
\qe\end{proof}

Here we introduce the the following Lemmas from \cite{Deng2020a} on pseudo-differential calculus, which will be frequently used in our analysis. 
Notice that the condition $l\le m$ in \cite{Deng2020a} is unnecessary.
\begin{Lem}[\cite{Deng2020a}, Lemma 2.3]\label{inverse_bounded_lemma}Let $m,c$ be $\Gamma$-admissible weight and $a\in S(m)$.
	Assume $a^w:H(mc)\to H(c)$ is invertible.
	If $b\in S(m)$, then there exists $C>0$, depending only on the seminorms of symbols to $(a^w)^{-1}$ and $b^w$, such that for $f\in H(mc)$,
	\begin{align*}
		\|b(v,D_v)f\|_{H(c)}+\|b^w(v,D_v)f\|_{H(c)}\le C\|a^w(v,D_v)f\|_{H(c)}.
	\end{align*}
	Consequently, if $a^w:H(m_1)\to L^2\in Op(m_1)$, $b^w:H(m_2)\to L^2\in Op(m_2)$ are invertible, then for $f\in\mathscr{S}$, 
	\begin{align*}
		\|b^wa^wf\|_{L^2}\lesssim \|a^wb^wf\|_{L^2},
	\end{align*}where the constant depends only on seminorms of symbols to $a^w,b^w,(a^w)^{-1},(b^w)^{-1}$.
\end{Lem}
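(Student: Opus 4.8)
The plan is to reduce the statement to two standard ingredients of the Weyl--H\"ormander calculus attached to the admissible metric $\Gamma=|dv|^2+|d\eta|^2$ and recorded in \cite[Chap.~2]{Lerner2010}: \textbf{(i)} the continuity theorem stating that a symbol $b\in S(m)$ produces a bounded operator $b^w:H(M')\to H(M'm^{-1})$ for every admissible weight $M'$, with operator norm controlled by finitely many seminorms of $b$ in $S(m)$; and \textbf{(ii)} the change-of-quantization fact that passing from the standard quantization $b(v,D_v)$ to the Weyl quantization changes the symbol only within the same class, i.e. $b(v,D_v)=\tilde b^w$ with $\tilde b\in S(m)$ and seminorms of $\tilde b$ controlled by those of $b$ (here one uses that $\Gamma$ is constant, so $\Gamma^\sigma\approx\Gamma$).

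For the first assertion I would argue as follows. Since $a^w$ is a bounded bijection $H(mc)\to H(c)$ between Banach spaces, the open mapping theorem gives that $(a^w)^{-1}:H(c)\to H(mc)$ is bounded, hence
\begin{align*}
	\|u\|_{H(mc)}\le \|(a^w)^{-1}\|\,\|a^wu\|_{H(c)},\qquad u\in H(mc).
\end{align*}
On the other hand, applying \textbf{(i)} with $M'=mc$ to $b\in S(m)$ yields $\|b^wu\|_{H(c)}\lesssim\|u\|_{H(mc)}$, and by \textbf{(ii)} the same bound holds for $b(v,D_v)=\tilde b^w$ since $\tilde b\in S(m)$ with controlled seminorms. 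Chaining these two inequalities with $u=f$ gives $\|b(v,D_v)f\|_{H(c)}+\|b^wf\|_{H(c)}\lesssim\|a^wf\|_{H(c)}$, with a constant depending only on $\|(a^w)^{-1}\|$ and on the relevant seminorms of $b$. Note that this part uses nothing about the structure of $(a^w)^{-1}$ beyond its boundedness.

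For the ``consequently'' part the extra input I would need is a \emph{transfer of invertibility across scales}: if $a\in S(m_1)$ and $a^w:H(m_1)\to L^2$ is invertible, then $a^w:H(m_1c)\to H(c)$ is invertible for every admissible $c$, and likewise for $b\in S(m_2)$. This follows from the ellipticity/parametrix part of the calculus, namely that $(a^w)^{-1}$ is itself the Weyl quantization of a symbol in $S(m_1^{-1})$, which by \textbf{(i)} maps $H(c)\to H(m_1c)$ and hence furnishes the inverse on the shifted scale. Granting this, $b^w:H(m_1m_2)\to H(m_1)$ is invertible, so that $\|b^wf\|_{H(m_1)}\approx\|f\|_{H(m_1m_2)}$; combining with the invertibility of $a^w:H(m_1)\to L^2$ and the continuity estimates of \textbf{(i)} for $a^w:H(m_1m_2)\to H(m_2)$ and for $b^w:H(m_2)\to L^2$, one gets
\begin{align*}
	\|b^wa^wf\|_{L^2}\lesssim\|a^wf\|_{H(m_2)}\lesssim\|f\|_{H(m_1m_2)}\lesssim\|b^wf\|_{H(m_1)}\lesssim\|a^wb^wf\|_{L^2},
\end{align*}
with constants depending only on seminorms of $a^w,b^w,(a^w)^{-1},(b^w)^{-1}$; in particular the claimed inequality (and, by symmetry in $a,b$, its reverse) follows.

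The step I expect to be the main obstacle is precisely this transfer of invertibility: one must know that the abstract operator inverse $(a^w)^{-1}$, a priori only a bounded map $L^2\to H(m_1)$, belongs to $Op(S(m_1^{-1}))$ --- this is the Beals--Bony-type characterization together with the parametrix construction in the Weyl--H\"ormander calculus. With it in hand the remaining bounds are routine consequences of the continuity theorem. For the first assertion alone no such subtlety is needed, only the open mapping theorem together with the continuity of $b^w$ and the change-of-quantization formula.
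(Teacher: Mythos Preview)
The paper does not supply a proof of this lemma; it is quoted verbatim from \cite[Lemma~2.3]{Deng2020a} and used as a black box. Your proposal is the standard argument and is correct: for the first assertion, the continuity theorem for $S(m)$ together with the boundedness of $(a^w)^{-1}$ is exactly what is needed, and the change-of-quantization remark handles $b(v,D_v)$; for the second, your chain of inequalities through $H(m_1m_2)$ is the right route, and you have correctly isolated the one nontrivial input, namely that $(a^w)^{-1}\in Op(S(m_1^{-1}))$ via a Beals-type characterization, which is what allows invertibility to transfer from $H(m_1)\to L^2$ to $H(m_1c)\to H(c)$.

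One small refinement: in the first part you write that nothing is used about $(a^w)^{-1}$ beyond its boundedness, but the statement asks for the constant to depend on \emph{seminorms of the symbol} of $(a^w)^{-1}$, not merely on its operator norm on a fixed pair of spaces. To match this precisely you again need $(a^w)^{-1}\in Op(S(m^{-1}))$ and the uniformity of the continuity theorem in the weight $c$; so the Beals/parametrix input is already implicitly present in the first assertion as stated, not only in the ``consequently'' part.
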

\begin{Lem}[\cite{Deng2020a}, Lemma 2.4]\label{bound_varepsilon}
	Denote $a_{K,l}:=a+Kl$, $m_{K,l}:=m+Kl$ for $K>1$, where $m,l$ are $\Gamma$-admissible weights. Assume $a\in S(m)$, $\partial_\eta (a_{K,l})\in S(K^{-\kappa}m_{K,l})$ uniformly in $K$ and
	$a_{K,l}\gtrsim m_{K,l}$.
	Let $\rho>0$ and $b\in S(\varepsilon m_{K,l}+\varepsilon^{-\rho}l)$, uniformly in $\varepsilon\in(0,1)$. Then there exists $K_0>0$, such that for $f\in H(mc)$, $\varepsilon\in(0,1)$,
	\begin{align*}
		\|b(v,D_v)f\|_{H(c)}+\|b^w(v,D_v)f\|_{H(c)}\le C_{K,l}\left(\varepsilon\|a^w(v,D_v)f\|_{H(c)}+\varepsilon^{-\rho}\|l^wf\|_{H(c)}\right).
	\end{align*}
\end{Lem}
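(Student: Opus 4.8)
The plan is to reduce the bound to Lemma~\ref{inverse_bounded_lemma}, applied in turn to the shifted operator $a_{K,l}^w=a^w+Kl^w$ (with $K$ taken large) and to the weight operator $l^w$, after decomposing the symbol $b$ into a part dominated by $m_{K,l}$ and a part dominated by $l$. First I would record that $a_{K,l}=a+Kl\in S(m_{K,l})$ with seminorms bounded uniformly in $K>1$: indeed $a\in S(m)\subset S(m_{K,l})$ since $m\le m_{K,l}$, while the $S(m_{K,l})$-seminorms of $Kl$ are bounded by the $S(l)$-seminorms of $l$ because $Kl\le m_{K,l}$ and $l$ is $\Gamma$-admissible. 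Combined with the standing hypotheses $a_{K,l}\gtrsim m_{K,l}$ and $\partial_\eta a_{K,l}\in S(K^{-\kappa}m_{K,l})$, the invertibility results used earlier in the text (those of \cite{Global2019} together with \cite{Deng2020a}), applied here to the family $\{a_{K,l}\}_{K>1}$, yield a threshold $K_0>0$ such that for every fixed $K\ge K_0$ the operator $a_{K,l}^w$ is invertible from $H(m_{K,l}c)$ onto $H(c)$, the symbol of $(a_{K,l}^w)^{-1}$ having seminorms depending only on $K$ and $l$. I expect this step to be the main obstacle: the remainder terms in the Weyl calculus identity $(a_{K,l}^{1/2})^w\#(a_{K,l}^{1/2})^w=a_{K,l}^w+r^w$ carry $\eta$-derivatives of $a_{K,l}^{1/2}$ — the first-order contribution vanishing as the Poisson bracket of a symbol with itself — so by $\partial_\eta a_{K,l}\in S(K^{-\kappa}m_{K,l})$ one has $r\in S(K^{-\kappa}m_{K,l})$, of relative size $O(K^{-\kappa})$; for $K$ large this is absorbed by a Neumann (or Cotlar) argument, and that is precisely where $K_0$ is generated.

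Next I would split $b$. Setting $M_\varepsilon:=\varepsilon m_{K,l}+\varepsilon^{-\rho}l$, $X:=\varepsilon m_{K,l}$, $Y:=\varepsilon^{-\rho}l$ and $g:=X/(X+Y)$, write $b=b_1+b_2$ with $b_1:=bg$ and $b_2:=b(1-g)$. The weight inequalities $|\partial^\delta X|\lesssim X$, $|\partial^\delta Y|\lesssim Y$ give, by a routine induction, $|\partial^\delta g|\lesssim g(1-g)$ for every $|\delta|\ge1$, uniformly in $\varepsilon\in(0,1)$ and $K>1$ (already $|\partial g|\lesssim XY/(X+Y)^2=g(1-g)$). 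Since $b\in S(M_\varepsilon)=S(X+Y)$ uniformly in $\varepsilon$, Leibniz's rule then yields $b_1\in S(X)=S(\varepsilon m_{K,l})$ and $b_2\in S(Y)=S(\varepsilon^{-\rho}l)$ with seminorms uniform in $\varepsilon$; in particular $\varepsilon^{-1}b_1\in S(m_{K,l})$ and $\varepsilon^{\rho}b_2\in S(l)$ uniformly in $\varepsilon$.

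Finally I would apply Lemma~\ref{inverse_bounded_lemma} twice. We may assume the right-hand side of the claimed inequality is finite (otherwise there is nothing to prove); then $a^wf,l^wf\in H(c)$, so $a_{K,l}^wf=a^wf+Kl^wf\in H(c)$ and, by invertibility of $a_{K,l}^w$, $f\in H(m_{K,l}c)$; likewise $f\in H(lc)$. Applying Lemma~\ref{inverse_bounded_lemma} with $a\rightsquigarrow a_{K,l}$, $m\rightsquigarrow m_{K,l}$ to $\varepsilon^{-1}b_1\in S(m_{K,l})$ gives $\|(\varepsilon^{-1}b_1)(v,D_v)f\|_{H(c)}+\|(\varepsilon^{-1}b_1)^wf\|_{H(c)}\lesssim\|a_{K,l}^wf\|_{H(c)}$, hence, after multiplying by $\varepsilon$,
\[
\|b_1(v,D_v)f\|_{H(c)}+\|b_1^wf\|_{H(c)}\lesssim\varepsilon\|a^wf\|_{H(c)}+\varepsilon K\|l^wf\|_{H(c)}.
\]
Applying it with $a\rightsquigarrow l$, $m\rightsquigarrow l$ to $\varepsilon^{\rho}b_2\in S(l)$ gives $\|b_2(v,D_v)f\|_{H(c)}+\|b_2^wf\|_{H(c)}\lesssim\varepsilon^{-\rho}\|l^wf\|_{H(c)}$. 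Adding the two estimates and using $\varepsilon K+\varepsilon^{-\rho}\le(K+1)\varepsilon^{-\rho}$ for $\varepsilon\in(0,1)$, we obtain
\[
\|b(v,D_v)f\|_{H(c)}+\|b^wf\|_{H(c)}\le C_{K,l}\bigl(\varepsilon\|a^wf\|_{H(c)}+\varepsilon^{-\rho}\|l^wf\|_{H(c)}\bigr),
\]
with $C_{K,l}$ depending only on $K$, $l$ and the uniform seminorms of $b/M_\varepsilon$, which is the assertion.
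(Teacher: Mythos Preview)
The paper does not supply a proof of this lemma; it is quoted from \cite{Deng2020a} and used as a black box, so there is no in-paper argument to compare against.

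Your approach---establish invertibility of $a_{K,l}^w=a^w+Kl^w$ for $K\ge K_0$, split $b=b_1+b_2$ via the cutoff $g=\varepsilon m_{K,l}/(\varepsilon m_{K,l}+\varepsilon^{-\rho}l)$ so that $\varepsilon^{-1}b_1\in S(m_{K,l})$ and $\varepsilon^{\rho}b_2\in S(l)$ uniformly in $\varepsilon$, then invoke Lemma~\ref{inverse_bounded_lemma} on each piece and recombine---is a natural and essentially correct reconstruction. The decomposition step is clean: admissibility of $m_{K,l}$ and $l$ gives $|\partial^\delta g|\lesssim g(1-g)$ uniformly in $\varepsilon,K$, and Leibniz's rule then yields the uniform symbol bounds on $b_1$, $b_2$ as you claim.

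Two points deserve tightening. First, your invertibility argument is the right idea but is phrased through $(a_{K,l}^{1/2})^w\#(a_{K,l}^{1/2})^w$, which is a detour; it is cleaner to work with $a_{K,l}\#a_{K,l}^{-1}$. For the flat metric $\Gamma=|dv|^2+|d\eta|^2$ there is no intrinsic gain in the calculus, so ellipticity $a_{K,l}\gtrsim m_{K,l}$ alone does not force the remainder to be small; the hypothesis $\partial_\eta a_{K,l}\in S(K^{-\kappa}m_{K,l})$ is exactly what supplies it. Concretely, the Poisson bracket $\{a_{K,l},a_{K,l}^{-1}\}=\partial_\eta a_{K,l}\,\partial_v a_{K,l}^{-1}-\partial_v a_{K,l}\,\partial_\eta a_{K,l}^{-1}$ lies in $S(K^{-\kappa})$ (note $\partial_\eta a_{K,l}^{-1}=-a_{K,l}^{-2}\partial_\eta a_{K,l}\in S(K^{-\kappa}m_{K,l}^{-1})$), and likewise for the higher-order terms, so $a_{K,l}\#a_{K,l}^{-1}=1+r$ with $r\in S(K^{-\kappa})$, and a Neumann series closes for $K\ge K_0$. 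Second, your application of Lemma~\ref{inverse_bounded_lemma} to the piece $b_2$ presumes that $l^w:H(lc)\to H(c)$ is invertible; this is not among the stated hypotheses. It holds for the concrete weights used later in the paper, but in full generality it should either be added as a standing assumption or justified separately (e.g.\ by the same large-$K$ mechanism if $l$ itself satisfies an analogous derivative condition, or by appealing to a standard isomorphism result for admissible weights).
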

For composition of pseudodifferential operator we have $a^wb^w= (a\#b)^w$ with 
\begin{align}\label{compostion}
	a\#b = ab + \frac{1}{4\pi i}\{a,b\} + \sum_{2\le k\le \nu}2^{-k}\sum_{|\alpha|+|\beta|=k}\frac{(-1)^{|\beta|}}{\alpha!\beta!}D^\alpha_\eta\partial^\beta_xaD^{\beta}_\eta\partial^\alpha_xb+r_\nu(a,b),
\end{align}where $X=(v,\eta)$,
\begin{align*}
	r_\nu(a,b)(X) & = R_\nu(a(X)\otimes b(Y))|_{X=Y},\\
	R_\nu &= \int^1_0\frac{(1-\theta)^{\nu-1}}{(\nu-1)!}\exp\Big(\frac{\theta}{4\pi i}\<\sigma\partial_X,\partial_Y\>\Big)\,d\theta\Big(\frac{1}{4\pi i}\<\sigma\partial_X,\partial_Y\>\Big)^\nu.
\end{align*}
Let $a_1(v,\eta)\in S(M_1,\Gamma),a_2(v,\eta)\in S(M_2,\Gamma)$, then $a_1^wa_2^w=(a_1\#a_2)^w$, $a_1\#a_2\in S(M_1M_2,\Gamma)$ with
\begin{align*}
	a_1\#a_2(v,\eta)&=a_1(v,\eta)a_2(v,\eta)
	+\int^1_0(\partial_{\eta}a_1\#_\theta \partial_{v} a_2-\partial_{v} a_1\#_\theta \partial_{\eta} a_2)\,d\theta,\\
	g\#_\theta h(Y):&=\frac{2^{2d}}{\theta^{-2n}}\int_\Rd\int_\Rd e^{-\frac{4\pi i}{\theta}\sigma(X-Y_1)\cdot(X-Y_2)}(4\pi i)^{-1}\<\sigma\partial_{Y_1}, \partial_{Y_2}\>g(Y_1) h(Y_2)\,dY_1dY_2,
\end{align*}with $Y=(v,\eta)$, $\sigma=\begin{pmatrix}
	0&I\\-I&0
\end{pmatrix}$.
For any non-negative integer $k$, there exists $l,C$ independent of $\theta\in[0,1]$ such that
\begin{align*}
	\|g\#_\theta h\|_{k;S(M_1M_2,\Gamma)}\le C\|g\|_{l,S(M_1,\Gamma)}\|h\|_{l,S(M_2,\Gamma)}.
\end{align*}
Thus if $\partial_{\eta}a_1,\partial_{\eta}a_2\in S(M'_1,\Gamma)$ and $\partial_{v}a_1,\partial_{v}a_2\in S(M'_2,\Gamma)$, then $[a_1,a_2]\in S(M'_1M'_2,\Gamma)$, where $[\cdot,\cdot]$ is the commutator defined by $[A,B]:=AB-BA$. 
As a consequence of composition and Lemma \ref{inverse_bounded_lemma}, we have the following.
\begin{Lem}\label{innerproduct}
	Let $m,c$ be $\Gamma$-admissible weight and $a^{1/2}\in S(m^{1/2})$.
	Assume $(a^{1/2})^w:H(mc)\to H(c)$ is invertible and $L\in S(m)$. Then 
	\begin{align*}
		(Lf,f)_{L^2} = (\underbrace{((a^{1/2})^w)^{-1}L}_{\in S(m^{1/2})}f,(a^{1/2})^wf)_{L^2}\lesssim \|(a^{1/2})^wf\|^2_{L^2}.
	\end{align*}
\end{Lem}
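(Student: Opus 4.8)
The plan is to push the invertible factor $(a^{1/2})^w$ onto the right slot of the pairing $(Lf,f)_{L^2}$ and then estimate the operator left in the other slot by $(a^{1/2})^w$ in $L^2$, so that a single Cauchy--Schwarz inequality closes the bound. First I would record the algebraic identity behind the statement: since $(a^{1/2})^w$ is invertible between the relevant weighted Sobolev spaces (cf.\ the discussion of $\tilde a$ above) and, its symbol being real-valued, formally self-adjoint, one has for $f\in\mathscr{S}$
\begin{align*}
(Lf,f)_{L^2}=\big((a^{1/2})^w\big[\big((a^{1/2})^w\big)^{-1}Lf\big],\,f\big)_{L^2}=\big(\big((a^{1/2})^w\big)^{-1}Lf,\,(a^{1/2})^wf\big)_{L^2},
\end{align*}
the general case following by density.

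Next I would justify that $\big((a^{1/2})^w\big)^{-1}L$ is itself the Weyl quantization of a symbol in $S(m^{1/2})$, which is the assertion under the brace. Since $a^{1/2}\in S(m^{1/2})$ and $(a^{1/2})^w$ is invertible on the weighted Sobolev scale, the Weyl--H\"ormander calculus attached to the admissible metric $\Gamma$ is closed under inversion (see \cite{Lerner2010}), so that $\big((a^{1/2})^w\big)^{-1}=p_0^w$ for some $p_0\in S(m^{-1/2})$. Composing with $L\in S(m)$ (recalled in the text from \cite{Deng2020a}) through the composition formula \eqref{compostion}, and using $S(m^{-1/2})\#S(m)\subset S(m^{1/2})$, I obtain $\big((a^{1/2})^w\big)^{-1}L=(p_0\#L)^w$ with $p_0\#L\in S(m^{1/2})$.

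Finally, Cauchy--Schwarz gives $|(Lf,f)_{L^2}|\le\|(p_0\#L)^wf\|_{L^2}\,\|(a^{1/2})^wf\|_{L^2}$, and since $p_0\#L$ and $a^{1/2}$ belong to the same symbol class $S(m^{1/2})$ with $(a^{1/2})^w$ invertible, Lemma \ref{inverse_bounded_lemma} --- applied with its weight taken to be $m^{1/2}$, its auxiliary weight $c$ trivial, and $b=p_0\#L$ --- yields $\|(p_0\#L)^wf\|_{L^2}\lesssim\|(a^{1/2})^wf\|_{L^2}$; combining the two displays proves the estimate. I expect the only step requiring genuine care, as opposed to routine symbol-class bookkeeping, to be the closure of the calculus under inversion, which is exactly what guarantees $p_0\in S(m^{-1/2})$ and hence that the composition lands in $S(m^{1/2})$; this is classical Weyl--H\"ormander theory and may simply be quoted.
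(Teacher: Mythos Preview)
Your proof is correct and follows exactly the route the paper intends: the paper does not give a separate proof but simply introduces the lemma with ``As a consequence of composition and Lemma \ref{inverse_bounded_lemma},'' and your argument spells out precisely those two ingredients---closure of the Weyl--H\"ormander calculus under inversion to get $((a^{1/2})^w)^{-1}\in Op\,S(m^{-1/2})$, composition via \eqref{compostion} to land in $S(m^{1/2})$, and then Lemma \ref{inverse_bounded_lemma} after Cauchy--Schwarz. The only cosmetic remark is that the self-adjointness of $(a^{1/2})^w$ you invoke is implicit in the paper's equality (and explicitly noted earlier for the symbol $\tilde a$), so your justification is appropriate.
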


\medskip
The following lemma concerns with dissipation of $L_\pm$, whose proof can be found in \cite[Lemma 2.6 and Theorem 8.1]{Gressman2011}.  
\begin{Lem}\label{lemmaL}For any $l\in\R$, multi-indices $\alpha,\beta$, we have the followings. 
	
	(i) It holds that \begin{equation*}
		(-Lg,g)_{L^2_v}\gtrsim \|(\I-\P)g\|^2_{L^2_D}.
	\end{equation*}

(ii) There exists $C>0$ such that 
\begin{align*}
	-(w^2_{l}Lg,g)_{L^2_v}\gtrsim \|w_lg\|^2_{L^2_D}-C\|g\|^2_{L^2_v(B_C)}.
\end{align*}

(iii) For any $\eta>0$, 
\begin{multline*}
	-(w^2_l(\al,\beta)\partial^\alpha_\beta Lg,\partial^\alpha_\beta g)_{L^2_v}\gtrsim \|w_l(\al,\beta)\partial^\alpha_\beta g\|^2_{L^2_D}\\
	 - \eta\sum_{|\beta_1|\le|\beta|}\|w_l(\al,\beta_1)\partial^\alpha_{\beta_1}g\|^2_{L^2_D}-C_\eta\|\partial^\alpha g\|^2_{L^2(B_{C_\eta})}.
\end{multline*}

\end{Lem}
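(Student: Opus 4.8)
The plan is to obtain all three estimates from the anisotropic coercivity and weighted dissipation theory for the non-cutoff linearized collision operator, most of which is already contained in \cite{Gressman2011} in the present whole-space, two-species setting; the task is mainly to organize the reductions. For (i), I would first use that $L$ is self-adjoint on $L^2_v$ and that $\P L=L\P=0$, so that $(-Lg,g)_{L^2_v}=(-L(\I-\P)g,(\I-\P)g)_{L^2_v}$ and one may assume $g\in(\ker L)^\perp$. Then I would invoke the global coercivity estimate --- the spectral gap of $-L$ combined with the Dirichlet-form (entropy production) lower bound --- which yields $(-Lh,h)_{L^2_v}\gtrsim|h|^2_{N^{s,\gamma}}$ for $h\in(\ker L)^\perp$, and close with the norm equivalence $|h|_{N^{s,\gamma}}\approx\|h\|_{L^2_D}$ already recorded above.

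For (ii), I would split $L=-\mathcal N+\mathcal K$ in the standard way: $\mathcal N$ collects the collision-frequency multiplier together with the singular non-local gain term, which together carry the full $N^{s,\gamma}$-dissipation, while $\mathcal K$ is a remainder with smooth kernel decaying faster than any polynomial in $v$. Commuting the weight $w_l$ into $\mathcal N$ produces an error whose symbol lies in $S(\ve\tilde a+\ve^{-\rho}\<v\>^{l'})$ for small $\ve$; by Lemma \ref{bound_varepsilon} this is absorbable, and one gets $-(w_l^2\mathcal N g,g)_{L^2_v}\gtrsim(1-\eta)\|w_lg\|^2_{L^2_D}-C_\eta\|g\|^2_{L^2_v(B_{C_\eta})}$. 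The $\mathcal K$-term I would treat directly: the rapid kernel decay gives $|(w_l^2\mathcal K g,g)_{L^2_v}|\lesssim\eta\|w_lg\|^2_{L^2_D}+C_\eta\|g\|^2_{L^2_v(B_{C_\eta})}$. Adding the two contributions and choosing $\eta$ small yields (ii).

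For (iii), the $x$-derivatives $\partial^\alpha$ commute with $L$, so after replacing $g$ by $\partial^\alpha g$ it suffices to commute $\partial_\beta$ past $L$. I would apply the Leibniz rule (together with the pre-/post-collisional change of variables) to the collision integral to write $\partial_\beta(Lg)=L\partial_\beta g+\sum_{|\beta_1|<|\beta|}C^{\beta_1}_\beta\,\L_{\beta-\beta_1}\partial_{\beta_1}g$, where each $\L_{\beta'}$ has the same structure as $L$ but carries an extra factor $\partial_{\beta'}(\mu^{1/2})$, hence extra Gaussian decay. Pairing with $w_l^2(\al,\beta)\partial^\alpha_\beta g$, the leading term $(-L\partial^\alpha_\beta g,w_l^2(\al,\beta)\partial^\alpha_\beta g)_{L^2_v}$ is handled by (ii), while the commutator terms I would bound by Cauchy--Schwarz together with the monotonicity $w_l(\al,\beta)\le w_l(\al,\beta_1)$ for $|\beta_1|<|\beta|$ and interpolation between $\|\cdot\|_{L^2_D}$ and $\|\cdot\|_{L^2_v(B_C)}$, producing $\eta\sum_{|\beta_1|\le|\beta|}\|w_l(\al,\beta_1)\partial^\alpha_{\beta_1}g\|^2_{L^2_D}+C_\eta\|\partial^\alpha g\|^2_{L^2(B_{C_\eta})}$. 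Choosing $\eta$ small gives (iii).

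I expect the main obstacle to be the weighted and differentiated coercivity in (ii)--(iii): controlling the commutators $[w_l,L]$ and $[\partial_\beta,L]$ by the dissipation norm $\|\cdot\|_{L^2_D}$ with an arbitrarily small prefactor, despite the non-integrable angular singularity. This is exactly the point at which the symbolic calculus --- the fact that $L\in S(\tilde a)$, together with Lemmas \ref{inverse_bounded_lemma} and \ref{bound_varepsilon} --- and the Gressman--Strain cancellation estimates are indispensable; the contributions that cannot be absorbed are always localized to a fixed ball $B_C$ in $v$ because $\ker L$ and the kernel of $\mathcal K$ both decay like Gaussians.
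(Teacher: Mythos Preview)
The paper does not give its own proof of this lemma; it simply records, just before the statement, that ``the proof can be found in \cite[Lemma 2.6 and Theorem 8.1]{Gressman2011}.'' So there is no in-house argument to compare against beyond that citation.

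Your sketch is in the right spirit and essentially reconstructs the Gressman--Strain proof, but with one notable methodological substitution. For part (ii), Gressman--Strain do not use the pseudo-differential calculus at all: their Theorem~8.1 works directly with the trilinear integral form, the Carleman representation, and the anisotropic Littlewood--Paley projections to show that inserting the weight $w_l$ into the coercive term $-\mathcal N$ costs only $\eta|w_lg|^2_{N^{s,\gamma}}+C_\eta\|g\|^2_{L^2(B_{C_\eta})}$. You instead propose to treat the commutator $[w_l,\mathcal N]$ symbolically via Lemma~\ref{bound_varepsilon}, relying on $L\in S(\tilde a)$. This is a legitimate alternative and is closer in flavor to the present paper's toolbox, but be careful: the statement $L\in S(\tilde a)$ is for the full operator $L$, not for the pieces $\mathcal N$ and $\mathcal K$ separately, and the coercivity lower bound $-(w_l^2\mathcal Ng,g)\gtrsim\|w_lg\|^2_{L^2_D}$ is a positivity statement, not merely a boundedness one, so it cannot be read off from symbol-class membership alone. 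You would still need the Gressman--Strain integral-level coercivity as input; the symbolic calculus only helps with the \emph{commutator} error, not with generating the main lower bound. With that caveat, your outline for (i)--(iii) is correct.
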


Notice that in Carleman representation (cf. \cite[Appendix]{Global2019}), the derivative on $v$ will apply to $f,g$ and $\mu^{1/2}$ respectively. Then, 
\begin{equation*}
\psi_{|\alpha|+|\beta|-4}\partial^\alpha_\beta\T(f,g) = \sum_{\alpha_1+\alpha_2=\alpha}\sum_{\beta_1+\beta_2+\beta_3=\beta}C^{\alpha_1,\alpha_2}_\alpha C^{\beta_1,\beta_2,\beta_3}_{\beta}\psi_{|\alpha|+|\beta|-4}\T_{\beta_3}(\partial^{\alpha_1}_{\beta_1}f,\partial^{\alpha_2}_{\beta_2}g)\psi_{|\beta_3|}. 
\end{equation*}
The next lemma concerns the estimates on the nonlinear collision operator $\Gamma_\pm$, which comes from \cite[Lemma 2.2]{Duan2013} and \cite[Proposition 3.1]{Strain2012}.
\begin{Lem}\label{Lem26a}
	Assume $\gamma+2s\le0$. For any $l\ge 0$, $m\ge 0$ and multi-index $\beta$, we have the upper bound 
	\begin{align}\label{12}
		|(w^2_l(\al,\beta)&\notag\partial^\alpha_\beta\Gamma_\pm(f,g),\partial^\alpha_\beta h)_{L^2_{v,x}}|\\&\notag\lesssim \sum_{\substack{\alpha_1+\alpha_2=\alpha\\\beta_1+\beta_2\le\beta}}\int_{\R^3}\|\partial^{\alpha_1}_{\beta_1}f\|_{L^2_v}\|w_l(\al,\beta)\partial^{\alpha_2}_{\beta_2}g\|_{L^2_D}\|w_l(\al,\beta)\partial^\alpha_\beta h\|_{L^2_D}\,dx\\&+ \sum_{\substack{\alpha_1+\alpha_2=\alpha\\\beta_1+\beta_2\le\beta}}\int_{\R^3}\|w_l(\al,\beta)\partial^{\alpha_1}_{\beta_1}f\|_{L^2_v}\|\partial^{\alpha_2}_{\beta_2}g\|_{L^2_D}\|w_l(\al,\beta)\partial^\alpha_\beta h\|_{L^2_D}\,dx\\&\notag+ \sum_{\substack{\alpha_1+\alpha_2=\alpha\\\beta_1+\beta_2\le\beta}}\int_{\R^3}\min\Big\{\sum_{|\beta'|\le2}\|w^{-m}\partial^{\alpha_1}_{\beta_1+\beta'}f\|_{L^2_v}\|w_l(\al,\beta)\partial^{\alpha_2}_{\beta_2}g\|_{L^2_D},\\&\notag\qquad\qquad\qquad\|w^{-m}\partial^{\alpha_1}_{\beta_1}f\|_{L^2_v}\sum_{|\beta'|\le2}\|w_l(\al,\beta)\partial^{\alpha_2}_{\beta_2+\beta'}g\|_{L^2_D}\Big\}\|w_l(\al,\beta)\partial^\alpha_\beta h\|_{L^2_D}\,dx.
	\end{align}
Let $i=1$ if $0<s<1/2$ and $i=2$ if $1/2\le s<1$, then 
\begin{align}\label{12a}
	\|\<v\>^l\Gamma(f,g)\|_{L^2_v}\lesssim \min\big\{\|\<v\>^{l+\frac{\gamma+2s}{2}}f\|_{H^2_v}\|\<v\>^{l+\frac{\gamma+2s}{2}}g\|_{H^i_v},\|\<v\>^{l+\frac{\gamma+2s}{2}}f\|_{L^2_v}\|\<v\>^{l+\frac{\gamma+2s}{2}}g\|_{H^{i+2}_v}\big\}.
\end{align}
\end{Lem}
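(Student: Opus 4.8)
The plan is to reduce both inequalities to the known weighted trilinear bounds for the elementary operator $\T_{\beta_3}$ (those of \cite[Lemma 2.2]{Duan2013} and \cite[Proposition 3.1]{Strain2012}, which build on \cite{Gressman2011,Alexandre2012}) and then to propagate the velocity weight $w_l(\al,\beta)$. First I would differentiate. Since $\Gamma_\pm(f,g)=\T(f_\pm,g_\pm)+\T(f_\mp,g_\pm)$ and, after the Carleman change of variables, the only $v$-dependence of $\T$ outside its two arguments is carried by the Maxwellian factor $\mu^{1/2}_*$, the Leibniz rule gives
\[
\partial^\alpha_\beta\T(f,g)=\sum_{\alpha_1+\alpha_2=\alpha}\;\sum_{\beta_1+\beta_2+\beta_3=\beta}C^{\alpha_1,\alpha_2}_{\alpha}C^{\beta_1,\beta_2,\beta_3}_{\beta}\,\T_{\beta_3}\big(\partial^{\alpha_1}_{\beta_1}f,\partial^{\alpha_2}_{\beta_2}g\big),
\]
exactly as in the display preceding the statement (with $\psi\equiv1$). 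Hence $\big(w^2_l(\al,\beta)\partial^\alpha_\beta\Gamma_\pm(f,g),\partial^\alpha_\beta h\big)_{L^2_{v,x}}$ is a finite sum of integrals $\int_{\R^3}\big(w^2_l(\al,\beta)\T_{\beta_3}(\partial^{\alpha_1}_{\beta_1}f,\partial^{\alpha_2}_{\beta_2}g),\partial^\alpha_\beta h\big)_{L^2_v}\,dx$ with $\alpha_1+\alpha_2=\alpha$ and $\beta_1+\beta_2+\beta_3=\beta$.

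For each fixed $x$ I would then apply the trilinear estimate to $\T_{\beta_3}$. Because $\partial_{\beta_3}\mu^{1/2}_*$ is again a Schwartz function of $v_*$ with Gaussian decay, $\T_{\beta_3}$ satisfies the same bounds as $\T=\T_0$, and the relevant one — combining the norm equivalence $\|\cdot\|_{L^2_D}\approx|\cdot|_{N^{s,\gamma}}$ with the weighted trilinear estimate of \cite[Lemma 2.2]{Duan2013} and \cite[Proposition 3.1]{Strain2012} — reads, schematically, for any power weight $w=\<v\>^{\ell}$,
\[
|(w^2\T_{\beta_3}(F,G),H)_{L^2_v}|\lesssim\Big(\|wF\|_{L^2_v}\|wG\|_{L^2_D}+\|F\|_{L^2_v}\|wG\|_{L^2_D}+\min\{\cdots\}\Big)\|wH\|_{L^2_D},
\]
where in the minimum one trades two extra velocity derivatives (on $F$ or on $G$) for an arbitrarily fast decaying weight $w^{-m}$ on $F$. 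Taking $w=w_l(\al,\beta)$, noting that $p,q>0$ makes $w_l(\al,\beta)$ dominated by each of the $w_l(\al_i,\beta_i)$ appearing after the Leibniz splitting (so no weight is lost), and then integrating in $x$ by Cauchy--Schwarz with $\|w_l(\al,\beta)\partial^\alpha_\beta h\|_{L^2_D}$ always placed in the last slot, reproduces precisely the three groups of terms on the right-hand side of \eqref{12}.

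The bound \eqref{12a} is the ``no pairing'' version and I would obtain it directly from \cite[Proposition 3.1]{Strain2012}: splitting $\Gamma=\T(f_\pm,g_\pm)+\T(f_\mp,g_\pm)$, one has $\|\<v\>^l\T(F,G)\|_{L^2_v}\lesssim\|\<v\>^{l+\frac{\gamma+2s}{2}}F\|_{L^2_v}\|\<v\>^{l+\frac{\gamma+2s}{2}}G\|_{N^{s,\gamma}}$ and the symmetric estimate; then the embeddings $|\<v\>^{l'}G|_{N^{s,\gamma}}\lesssim\|\<v\>^{l'}G\|_{H^i_v}$ (valid since $2s\le i$, with $i=1$ for $0<s<1/2$ and $i=2$ for $1/2\le s<1$, reflecting that $\Gamma$ acts locally in $v$ like the order-$2s$ operator $(-\Delta_v)^s$) and $\|\<v\>^{l'}F\|_{L^2_v}\le\|\<v\>^{l'}F\|_{H^2_v}$ deliver the two entries of the minimum in \eqref{12a}. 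The one genuinely delicate ingredient here is the underlying weighted trilinear estimate for $\T_{\beta_3}$ for the full non-cutoff kernel, uniformly as $s\uparrow1$ and with the singular weight $\<v\>^{\gamma/2+s}$; this, however, is exactly what \cite{Gressman2011,Strain2012,Duan2013} supply, so once it is granted the remaining work is the routine bookkeeping of $w_l(\al,\beta)$ under the Leibniz rule, which goes through because $w_l(\al,\beta)$ is monotone decreasing in $(|\al|,|\beta|)$ and, in each regime of the trilinear estimate, only the most differentiated factor needs to carry the weight, the others sitting in an $L^2_v$-type slot.
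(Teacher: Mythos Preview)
Your proposal is correct and matches the paper's approach: the paper does not supply an independent proof of this lemma but simply attributes \eqref{12} and \eqref{12a} to \cite[Lemma 2.2]{Duan2013} and \cite[Proposition 3.1]{Strain2012}, and your write-up is precisely the reduction to those cited trilinear estimates via the Leibniz expansion of $\partial^\alpha_\beta\T$ recorded in the display before the lemma. If anything, you give more detail than the paper does; the only cosmetic slip is in your schematic trilinear bound, where the two main terms should read $\|F\|_{L^2_v}\|wG\|_{L^2_D}+\|wF\|_{L^2_v}\|G\|_{L^2_D}$ to match the first two groups of \eqref{12}.
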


In order to obtain a suitable norm estimate of $\T$ on $x$. We write a fundamental estimate, which is very useful throughout our analysis. 

\begin{Lem}\label{Lem27a}
	For any $u,v\in H^2_x$, we have 
	\begin{align}\label{13}
			\|uv\|_{L^2_x}&\lesssim \min\{\|\nabla_xu\|_{H^1_x}\|v\|_{L^2_x}, \|\nabla_xu\|_{L^2_x}\|v\|_{H^1_x}\}.
	\end{align}
\end{Lem}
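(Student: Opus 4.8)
The plan is to prove the product estimate $\|uv\|_{L^2_x} \lesssim \min\{\|\nabla_x u\|_{H^1_x}\|v\|_{L^2_x}, \|\nabla_x u\|_{L^2_x}\|v\|_{H^1_x}\}$ on $\R^3$ via Hölder's inequality combined with the Gagliardo--Nirenberg--Sobolev embeddings available in three dimensions. By symmetry (or by proving one bound and relabeling) it suffices to establish $\|uv\|_{L^2_x} \lesssim \|\nabla_x u\|_{L^2_x}\|v\|_{H^1_x}$; the other bound follows by swapping the roles of the two factors and noting that the right-hand sides are of the same shape. I would start by applying Hölder with exponents $6$ and $3$:
\begin{align*}
	\|uv\|_{L^2_x} \le \|u\|_{L^6_x}\|v\|_{L^3_x}.
\end{align*}
For the first factor, the critical Sobolev embedding $\dot{H}^1(\R^3) \hookrightarrow L^6(\R^3)$ gives $\|u\|_{L^6_x} \lesssim \|\nabla_x u\|_{L^2_x}$ (here it is essential that we work on the whole space $\R^3$, so the homogeneous Sobolev inequality holds without a lower-order term). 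For the second factor, interpolation between $L^2$ and $L^6$ — i.e. $\|v\|_{L^3_x} \le \|v\|_{L^2_x}^{1/2}\|v\|_{L^6_x}^{1/2}$ — together with $\|v\|_{L^6_x} \lesssim \|\nabla_x v\|_{L^2_x} \le \|v\|_{H^1_x}$ yields $\|v\|_{L^3_x} \lesssim \|v\|_{H^1_x}$. Combining these gives the first bound; relabeling $u \leftrightarrow v$ (and using $\|u\|_{H^1_x} \approx$ appropriate norms, noting the displayed inequality is stated with $\nabla_x u$ on one side and $v$ fully on the other) produces the second.

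More carefully, to get exactly $\|uv\|_{L^2_x} \lesssim \|\nabla_x u\|_{H^1_x}\|v\|_{L^2_x}$ I would instead put the two derivatives on $u$: write $\|uv\|_{L^2_x} \le \|u\|_{L^\infty_x}\|v\|_{L^2_x}$ and invoke the Sobolev embedding $H^2(\R^3) \hookrightarrow L^\infty(\R^3)$ in the refined form $\|u\|_{L^\infty_x} \lesssim \|\nabla_x u\|_{L^2_x}^{1/2}\|\nabla_x^2 u\|_{L^2_x}^{1/2} \lesssim \|\nabla_x u\|_{H^1_x}$, which again holds on $\R^3$ without zeroth-order terms because one can control $\|u\|_{L^\infty}$ by $\|\nabla u\|_{L^6} \cdot (\text{something})$ or more directly by Agmon's inequality $\|u\|_{L^\infty}^2 \lesssim \|u\|_{L^6}\|\nabla^2 u\|_{L^2}$ followed by $\|u\|_{L^6}\lesssim\|\nabla u\|_{L^2}$. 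Thus the scheme is: one bound via $L^6 \times L^3$ Hölder plus interpolation, the other via $L^\infty \times L^2$ Hölder plus an Agmon-type inequality, each time exploiting that all Sobolev constants on $\R^3$ can be taken purely in terms of gradients.

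The only mild subtlety — and the step I would be most careful about — is verifying that every Sobolev/Gagliardo--Nirenberg inequality invoked is the \emph{homogeneous} one, so that the right-hand sides genuinely involve $\nabla_x u$ rather than the full $H^k_x$ norm of $u$; this is exactly where the hypothesis that we are on the whole space $\R^3$ (rather than a bounded domain or torus) is used, and it is what makes the stated sharp form with $\|\nabla_x u\|_{H^1_x}$ and $\|\nabla_x u\|_{L^2_x}$ correct. Beyond that, the argument is a routine two-line application of Hölder and standard embeddings, with no genuine obstacle; the lemma is a convenience packaging of these facts for repeated use in the trilinear estimates of Section~\ref{sec5}.
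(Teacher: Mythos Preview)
Your proposal is correct and follows essentially the same approach as the paper's proof: the paper also obtains the first bound via $\|uv\|_{L^2}\le\|u\|_{L^\infty}\|v\|_{L^2}$ combined with the Gagliardo--Nirenberg inequality $\|u\|_{L^\infty}\lesssim\|\nabla_xu\|_{L^2}^{1/2}\|\nabla_x^2u\|_{L^2}^{1/2}\lesssim\|\nabla_xu\|_{H^1}$, and the second bound via the $L^6\times L^3$ H\"older split with $\|u\|_{L^6}\lesssim\|\nabla_xu\|_{L^2}$ and $\|v\|_{L^3}\lesssim\|v\|_{H^1}$. Your discussion of why homogeneous inequalities on $\R^3$ are needed is a useful elaboration, but the core argument is identical.
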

\begin{proof}
	The proof is straightforward. Notice that this lemma give that $H^2_x$ is a Banach algebra. 
	By Gagliardo–Nirenberg interpolation inequality and  Sobolev embedding; cf. \cite[Theorem 12.83]{Leoni2017} and \cite[Proposition 2.2 and Lemma 5.1]{Strain2013}, we have 
	\begin{align*}
		\|u\|_{L^\infty}&\lesssim \|\nabla_xu\|^{1/2}\|\nabla^2_xu\|^{1/2}\lesssim \|\nabla_xu\|_{H^1},\\
		\|uv\|_{L^2}&\lesssim \|u\|_{L^6}\|v\|_{L^3}\lesssim \|\nabla_xu\|_{L^2}\|v\|_{H^1}.
	\end{align*}
Then \eqref{13} follows from H\"{o}lder's inequality. 
\qe\end{proof}

The following Corollary gives the behavior of nonlinear terms in Vlasov-Poisson-Boltzmann system. 
\begin{Coro}\label{Coro1} Let $l\ge 0$ and $K\ge 4$. Define $i=1$ if $0<s<\frac{1}{2}$ and $i=2$ if $\frac{1}{2}\le s<1$. Assume $l>\max\{-\frac{3(\gamma+2s)}{4}+i+1, -\frac{5(\gamma+2s)}{4}+2\}$. Then, there exists  $l_*>-\frac{5(\gamma+2s)}{4}$ such that 
	\begin{align*}
	\|\<v\>^{l_*}g_\pm\|_{Z_1}+\|\<v\>^{l_*}\nabla_xg_\pm\|_{L^2_{v,x}}\lesssim \E_{K,l},
	\end{align*}where $g_\pm = \pm\nabla_x\phi\cdot\nabla_vf_\pm\mp\frac{1}{2}\nabla_x\phi\cdot vf_\pm+\Gamma_\pm(f,f)$. 
\end{Coro}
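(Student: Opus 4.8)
The strategy is to estimate the three contributions to $g_\pm$—the transport term $\nabla_x\phi\cdot\nabla_vf_\pm$, the lower-order term $\nabla_x\phi\cdot vf_\pm$, and the collision term $\Gamma_\pm(f,f)$—separately, each in both the $Z_1$ norm and the weighted $L^2_{v,x}$ norm of its $x$-gradient, and to bound every resulting product by the energy functional $\E_{K,l}$ using Lemma \ref{Lem27a} to distribute $H^2_x$ regularity and Lemma \ref{Lem26a} (the pointwise bound \eqref{12a}) for the collision piece. Since $\nabla_x\phi = -E$ (up to sign) and $E$ together with $\P f$ and $(\I-\P)f$ all appear in $\E_{K,l}$ with up to $K\ge 4$ derivatives, the arithmetic of derivative counts will close as long as we never demand more than $K$ derivatives on any single factor and never more than two $x$-derivatives when we invoke the Banach algebra / Gagliardo–Nirenberg estimate \eqref{13}.

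\textbf{Step 1: the field term $\nabla_x\phi\cdot vf_\pm$ and the transport term $\nabla_x\phi\cdot\nabla_vf_\pm$.} For the $Z_1$ estimate I would write $\|\<v\>^{l_*}E\cdot\nabla_v f\|_{Z_1}= \big\|\,\|\<v\>^{l_*}E\cdot\nabla_v f\|_{L^1_x}\big\|_{L^2_v}$ and apply Cauchy–Schwarz in $x$ to split off $\|E\|_{L^2_x}$ (controlled by $\E_{K,l}$) times $\|\<v\>^{l_*}\nabla_v f\|_{L^2_x}$, then take the $L^2_v$ norm; here I use that $l_*$ can be taken with $l_* + \tfrac{\gamma+2s}{2}$ (or rather $l_* \le l - p$ after accounting for the weight shift in $w_l(\alpha,\beta)$ for one $v$-derivative) small enough relative to $l$, which is exactly the role of the hypothesis $l>-\tfrac{3(\gamma+2s)}{4}+i+1$. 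For $\|\<v\>^{l_*}\nabla_x(E\cdot\nabla_v f)\|_{L^2_{v,x}}$, I distribute $\nabla_x$ by Leibniz: either $\nabla_x E$ hits (bounded in $L^2_x$ by $\E_{K,l}$, since it is one derivative of $E$, and $K\ge 4\ge 1$) paired with $\nabla_v f$ in $L^\infty_x$, or $E$ stays in $L^\infty_x$ (using $\|E\|_{L^\infty_x}\lesssim \|\nabla_x E\|_{H^1_x}$ by \eqref{13}-type embedding, costing two more derivatives, still $\le K$) paired with $\nabla_x\nabla_v f$ in $L^2_{v,x}$. The same scheme handles $E\cdot v f_\pm$, which is in fact easier since $v$ costs no derivative, only the innocuous polynomial weight absorbed into $\<v\>^{l_*}$ with room to spare from $l>-\tfrac{5(\gamma+2s)}{4}+2$.

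\textbf{Step 2: the collision term $\Gamma_\pm(f,f)$.} Apply \eqref{12a} with exponent $l_*$ pointwise in $x$, choosing $i=1$ for $0<s<1/2$ and $i=2$ otherwise, to get $\|\<v\>^{l_*}\Gamma_\pm(f,f)\|_{L^2_v}\lesssim \|\<v\>^{l_*+\frac{\gamma+2s}{2}}f\|_{L^2_v}\|\<v\>^{l_*+\frac{\gamma+2s}{2}}f\|_{H^{i+2}_v}$; for the $Z_1$ norm, take $L^1_x$ then $L^2_v$, splitting the $x$-integral of the product by Lemma \ref{Lem27a} so that one factor carries $\nabla_x$ in $L^2_x$ and the other is in $H^1_x$, producing at most $i+2\le 4 = K$ $v$-derivatives on one factor and $1$ $x$-derivative on the other, hence bounded by $\E_{K,l}$ (using the weight shift $w_l(\alpha,\beta)$ which is designed so that $l_* + \tfrac{\gamma+2s}{2}$ plus the loss $q|\beta|$ from $|\beta|\le i+2$ $v$-derivatives stays $\le l$; this is the origin of the lower bound on $l$). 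For $\|\<v\>^{l_*}\nabla_x\Gamma_\pm(f,f)\|_{L^2_{v,x}}$, distribute $\nabla_x$ onto one copy of $f$, estimate the $L^2_x$ norm via \eqref{13} by sending one factor to $L^\infty_x$ (two $x$-derivatives) and the other with its $\nabla_x$ already on it to $L^2_x$, giving total $x$-derivative count $\le 3$ and $v$-derivative count $\le i+2$, still within $K=4$—this is precisely why the statement needs $K\ge 4$.

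\textbf{Main obstacle.} The delicate point is bookkeeping the velocity weights: after taking $\nabla_v$ or the $H^{i+2}_v$ norm required by \eqref{12a}, the natural weight that survives in $\E_{K,l}$ on $(\I-\P)f$ is $w_l(\alpha,\beta)=\<v\>^{l-p|\alpha|-q|\beta|+Kp}$, and one must verify that $l_*$ with $l_*>-\tfrac{5(\gamma+2s)}{4}$ can genuinely be accommodated—i.e. that the worst combined loss $\frac{\gamma+2s}{2} + q(i+2)$ (or the $p$-shifts from $x$-derivatives, and the Sobolev-embedding loss of two further derivatives) still leaves $l_* \le l$ given the stated lower bounds on $l$. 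This is purely a matter of checking the inequality $l > -\tfrac{3(\gamma+2s)}{4}+i+1$ and $l>-\tfrac{5(\gamma+2s)}{4}+2$ suffice once all losses are added up, and of treating the $\P f$ part of $f$ (which carries Maxwellian weights and so poses no difficulty, any polynomial weight being absorbed by $\mu^{1/2}$) separately from the $(\I-\P)f$ part. Everything else is a routine application of Hölder, Cauchy–Schwarz, Lemma \ref{Lem27a}, and \eqref{12a}.
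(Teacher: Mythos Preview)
Your overall strategy—split $g_\pm$ into its three pieces, bound each in $Z_1$ and in weighted $L^2_{v,x}$ of $\nabla_x$, invoke \eqref{12a} for $\Gamma$, and use \eqref{13} to distribute $x$-regularity—is exactly what the paper does. But two concrete choices in your plan do not close.

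First, for the $Z_1$ estimate of $\Gamma_\pm(f,f)$ you invoke Lemma~\ref{Lem27a}. That lemma controls an $L^2_x$ norm of a product, not an $L^1_x$ norm; for $Z_1=L^2_vL^1_x$ you simply Cauchy--Schwarz (or Young) in $x$ to get $\|\<v\>^{l_*}\Gamma(f,f)\|_{Z_1}\lesssim \|\<v\>^{l_*+\frac{\gamma+2s}{2}}f\|_{H^2_vL^2_x}^2$, as the paper does. This is easy to fix but worth stating correctly.

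Second, and more substantively, your consistent use of the second alternative in \eqref{12a} (the $L^2_v\times H^{i+2}_v$ form) forces one factor to carry $i+2$ velocity derivatives. When $i=2$ and $K=4$, that is $|\beta|=4$, and the weight available in $\E_{K,l}$ on that term is $w_l(0,4)=\<v\>^{l-4q+4p}=\<v\>^{l+4\gamma}$ (since $p-q=\gamma$). The resulting constraint $l_*+\tfrac{\gamma+2s}{2}\le l+4\gamma$ together with $l_*>-\tfrac{5(\gamma+2s)}{4}$ requires $l>-\tfrac{3(\gamma+2s)}{4}-4\gamma$, which is \emph{strictly stronger} than the stated hypothesis $l>-\tfrac{3(\gamma+2s)}{4}+3$ whenever $\gamma<-3/4$ (i.e.\ throughout most of the soft-potential range). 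So your weight bookkeeping does not close as written. The paper avoids this by using the first alternative in \eqref{12a} (the $H^2_v\times H^i_v$ form) for the $\nabla_x\Gamma$ term, so that no single factor carries more than $i+1\le 3$ total derivatives; one then checks directly that $w_l(1,i)\ge\<v\>^{l-(i+1)}$ in the admissible range, which yields exactly the constraint $l\ge l_*+\tfrac{\gamma+2s}{2}+i+1$ and matches the hypothesis. The rest of your outline (field terms, existence of $l_*$) is fine and agrees with the paper.
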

\begin{proof}
	By using \eqref{12a} and Young's inequality, we have 
	\begin{align*}
		\|\<v\>^{l_*}\Gamma(f,f)\|_{Z_1}&\lesssim \int dx\,\|\<v\>^{l_*+\gamma/2+s}f\|_{H^2_v}\|\<v\>^{l_*+\gamma/2+s}f\|_{H^i_v}\\
		&\lesssim \|\<v\>^{l_*+\frac{\gamma+2s}{2}}f\|^2_{H^2_vL^2_x}
		\lesssim \E_{K,l},
	\end{align*}  whenever $l\ge l_*+\frac{\gamma+2s}{2}+2$.  
	On the other hand,
	\begin{align*}
		\|\<v\>^{l_*}\nabla_x\phi\cdot\nabla_vf_\pm\|_{Z_1}&\lesssim \|\nabla_x\phi\|_{L^2_x}\|\<v\>^{l_*}\nabla_vf\|_{L^2_{v,x}}\lesssim\E_{K,l},\\
		\|\<v\>^{l_*}v\cdot\nabla_x\phi f_\pm\|_{Z_1}&\lesssim\|\nabla_x\phi\|_{L^2_x}\|\<v\>^{l_*}vf_\pm\|_{L^2_{v,x}}\lesssim\E_{K,l},
	\end{align*}whenever $l\ge l_*+1$. 
	Similarly, by using \eqref{13}, 
	\begin{align*}\notag
		\|\<v\>^{l_*}\nabla_x\Gamma(f,f)\|_{L^2_{v,x}}&\lesssim \Big\|\|\<v\>^{l_*+\frac{\gamma+2s}{2}}\nabla_xf\|_{L^2_v}\|\<v\>^{l_*+\frac{\gamma+2s}{2}}f\|_{H^i_v}\Big\|_{L^2_x}\\&\qquad+\Big\|\|\|\<v\>^{l_*+\frac{\gamma+2s}{2}}f\|_{L^2_v}\|\<v\>^{l_*+\frac{\gamma+2s}{2}}\nabla_xf\|_{H^i_v}\Big\|_{L^2_x}\\
		&\lesssim \|\<v\>^{l_*+\frac{\gamma+2s}{2}}f\|_{L^2_{v}H^2_{x}}\|\<v\>^{l_*+\frac{\gamma+2s}{2}}f\|_{H^i_vH^1_x}\\
		&\lesssim \E_{K,l},
	\end{align*}whenever $l\ge l_*+\frac{\gamma+2s}{2}+i+1$. By \eqref{13}, 
	\begin{align*}
		\|\<v\>^{l_*}\nabla_x(\nabla_x\phi\cdot\nabla_vf_\pm)\|_{L^2_{v,x}}&\lesssim \|\nabla_x\phi\|_{H^2_x}\|\<v\>^{l_*}f_\pm\|_{H^1_{v}H^1_x}\lesssim\E_{K,l}\\
		\|\<v\>^{l_*}\nabla_x(v\cdot\nabla_x\phi f_\pm)\|_{L^2_{v,x}}&\lesssim \|\nabla_x\phi\|_{H^1_x}\|\<v\>^{l_*}vf_\pm\|_{L^2_{v}H^1_x}\lesssim\E_{K,l},
	\end{align*}whenever $l\ge l_*+2$. 
	Now we verify that such $l_*$ exists. From the restriction above, we need to choose $l_*$ such that 
	\begin{align*}
		-\frac{5(\gamma+2s)}{4}< l_* \le l-\frac{\gamma+2s}{2}-i-1,\quad l_* \le l-2.
	\end{align*}Such choice exists, since $l> \max\{-\frac{3(\gamma+2s)}{4}+i+1, -\frac{5(\gamma+2s)}{4}+2\}$.
	
\qe\end{proof}

With the help of Lemma \ref{Lem26a} and \ref{Lem27a}, we can control the trilinear term as the following.
\begin{Lem}\label{lemmat}
	Let $K\ge 4$. For any multi-indices $|\alpha|+|\beta|\le K$ and real number $l\ge 0$, we have \begin{equation*}\begin{aligned}
		\Big|&(\psi_{2|\alpha|+2|\beta|-8}w^2_l(\al,\beta)\notag\partial^\alpha_\beta\Gamma_\pm(f,g),\partial^\alpha_\beta h)_{L^2_{v,x}}\Big|\\&\lesssim \bigg(\sum_{|\alpha|+|\beta|\le K}\|\psi_{|\alpha|+|\beta|-4}\partial^{\alpha}_\beta f\|_{L^2_{v,x}}\sum_{\substack{|\alpha|\ge 1\\ |\alpha|+|\beta|\le K}}\|\psi_{|\alpha|+|\beta|-4}w_l(\al,\beta)\partial^{\alpha}_{\beta}g\|_{L^2_xL^2_D}\\
		&\quad+\sum_{\substack{|\alpha|\ge 1\\ |\alpha|+|\beta|\le K}}\|\psi_{|\alpha|+|\beta|-4}\partial^{\alpha}_\beta f\|_{L^2_{v,x}}
		\sum_{|\alpha|+|\beta|\le K}\|\psi_{|\alpha|+|\beta|-4}w_l(\al,\beta)\partial^{\alpha}_{\beta}g\|_{L^2_xL^2_D}\\
		&\quad+\sum_{|\alpha|+|\beta|\le K}\|\psi_{|\alpha|+|\beta|-4}w_l(\al,\beta)\partial^{\alpha}_\beta f\|_{L^2_{v,x}}\sum_{\substack{|\alpha|\ge 1\\ |\alpha|+|\beta|\le K}}\|\psi_{|\alpha|+|\beta|-4}\partial^{\alpha}_{\beta}g\|_{L^2_xL^2_D}\\
		&\quad+\sum_{\substack{|\alpha|\ge 1\\ |\alpha|+|\beta|\le K}}\|\psi_{|\alpha|+|\beta|-4}w_l(\al,\beta)\partial^{\alpha}_\beta f\|_{L^2_{v,x}}
		\sum_{|\alpha|+|\beta|\le K}\|\psi_{|\alpha|+|\beta|-4}\partial^{\alpha}_{\beta}g\|_{L^2_xL^2_D}\bigg)
		\\
		&\qquad\qquad\qquad\qquad\qquad\qquad\qquad\qquad\qquad\times\|	\psi_{|\alpha|+|\beta|-4}w_l(\al,\beta)\partial^{\alpha}_\beta h\|_{L^2_xL^2_D},\end{aligned}
	\end{equation*}
	where we restrict $t\in[0,1]$ when considering $\psi=t^N$ as in Theorem \ref{main2}.
\end{Lem}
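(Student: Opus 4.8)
The plan is to reduce the trilinear estimate on $\Gamma_\pm$ to the bilinear collision estimate \eqref{12} from Lemma \ref{Lem26a} combined with the Sobolev-product estimate \eqref{13} from Lemma \ref{Lem27a}, distributing derivatives via the Leibniz rule for $\partial^\alpha_\beta\T$ recorded just before the statement. First I would expand
$\psi_{2|\alpha|+2|\beta|-8}w^2_l(\al,\beta)\partial^\alpha_\beta\Gamma_\pm(f,g)$ using that $\psi_{2|\alpha|+2|\beta|-8}=\psi_{|\alpha|+|\beta|-4}^2$ (since $\psi_k=\psi^k$ for $k>0$ and $\psi\le 1$, and $t\in[0,1]$ when $\psi=t^N$, so the square splits cleanly and the extra index $-8$ versus $-4$ is exactly two copies of the $-4$ shift), and the Leibniz decomposition into a sum over $\alpha_1+\alpha_2=\alpha$, $\beta_1+\beta_2+\beta_3=\beta$ of terms $\psi_{|\alpha|+|\beta|-4}\T_{\beta_3}(\partial^{\alpha_1}_{\beta_1}f,\partial^{\alpha_2}_{\beta_2}g)\psi_{|\beta_3|}$. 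Since $\psi_{|\beta_3|}\le 1$ and $\T_{\beta_3}$ obeys the same bounds as $\Gamma_\pm$ (the extra weight $\partial_{\beta_3}\mu^{1/2}$ is harmless, as already used in \eqref{12}), I would apply \eqref{12} pointwise in $x$ to each term, so that the problem becomes an $x$-integral of a product of three $L^2_v$/$L^2_D$-type norms.

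Next, I would handle the spatial integration. The key is that in each summand at least one of $f$, $g$, $h$ carries all of $\alpha$ (or most of it), and I allocate the $x$-derivatives so that the two factors other than the one carrying the full $\partial^\alpha$ have strictly fewer than $K$ spatial derivatives; because $K\ge 4$, any factor with at most $K-2$ derivatives can be placed in $H^2_x\subset L^\infty_x$ by \eqref{13} (equivalently by the Banach-algebra property of $H^2_x$ noted in Lemma \ref{Lem27a}). Concretely, for the $x$-integral $\int_{\R^3}A(x)B(x)C(x)\,dx$ with $C$ the $h$-factor, I bound it by $\|A\|_{L^2_x}\|B\|_{L^\infty_x}\|C\|_{L^2_x}$ or $\|A\|_{L^\infty_x}\|B\|_{L^2_x}\|C\|_{L^2_x}$ and then use $\|\cdot\|_{L^\infty_x}\lesssim\|\nabla_x\cdot\|_{H^1_x}$; this converts a product with low-order $x$-derivatives into the $H^1_x$/$H^2_x$ norms appearing on the right-hand side, i.e.\ into the sums with $|\alpha|\ge 1$. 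The four groups on the right of the claimed inequality correspond exactly to the four ways the first two lines of \eqref{12} distribute the weight $w_l(\al,\beta)$ between the $f$-factor and the $g$-factor, each in the two cases "$f$ gets extra $x$-derivatives" vs. "$g$ gets extra $x$-derivatives"; the third, $\min$-type line of \eqref{12} is dominated by the same expressions after using $\|w^{-m}\partial^{\alpha_1}_{\beta_1+\beta'}f\|_{L^2_v}\lesssim\|\partial^{\alpha_1}_{\beta_1+\beta'}f\|_{L^2_v}$ (as $m\ge 0$) and absorbing the two extra $v$-derivatives into the total order $\le K$, using $K\ge 4$ again so that $|\beta_1|+2+|\alpha_1|\le K$ stays available in the low-order factor.

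The main obstacle I expect is the bookkeeping of indices: one must check that in \emph{every} term of the Leibniz sum, after choosing which factor keeps the full $\partial^\alpha$, the remaining factors genuinely have total order $\le K$ with at least one spatial derivative to spare (so that the $L^\infty_x$ embedding and the $|\alpha|\ge 1$ restriction on the right-hand side are both legitimate), and that the weight $w_l(\al,\beta)$ can be moved between $\partial^{\alpha_i}_{\beta_i}$ factors — this uses the standard monotonicity $w_l(\al_1,\beta_1)\lesssim w_l(\al,\beta)$ when $|\al_1|\le|\al|$, $|\beta_1|\le|\beta|$, which holds because $p,q>0$ make $w_l$ decreasing in the derivative count. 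The borderline case is $|\alpha|+|\beta|=K$ with $\alpha_1=\alpha$ or $\alpha_2=\alpha$: then the companion factor has $|\beta_i|\le|\beta|$ and zero spatial derivatives, so it already sits in $L^\infty_x$ only after spending derivatives from $H^2_x$, which forces $|\beta|\le K-2$; since the $h$-factor and the full-$\alpha$ factor together then use $\le K$ derivatives, consistency is maintained precisely because $K\ge 4$. Once these index constraints are verified termwise, summing the finitely many Leibniz contributions and relabelling gives the four-group bound exactly as stated, with the trailing factor $\|\psi_{|\alpha|+|\beta|-4}w_l(\al,\beta)\partial^\alpha_\beta h\|_{L^2_xL^2_D}$ coming from the $h$-slot of \eqref{12} (which always retains the full $\partial^\alpha_\beta$, hence needs no $L^\infty_x$ trade).
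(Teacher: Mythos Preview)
Your approach is essentially the paper's: apply \eqref{12} pointwise in $x$, then use \eqref{13} to trade $L^\infty_x$ for extra spatial derivatives, with a case splitting on $|\alpha_1|+|\beta_1|$ that distributes the $\psi$-weights via $\psi_{|\alpha|+|\beta|-4}\le\psi_{|\alpha_1|+|\beta_1|-4}\psi_{|\alpha_2+\alpha'|+|\beta_2|-4}$ and invokes both sides of the $\min$ in \eqref{12} as needed (the paper separates $|\alpha_1|+|\beta_1|=0,1,2,3,\ge4$ for that term). One slip: your monotonicity claim is stated backwards --- since $p,q>0$, fewer derivatives give a \emph{larger} weight, so the inequality you actually need and use is $w_l(\alpha,\beta)\le w_l(\alpha_2+\alpha',\beta_2)$ whenever the latter carries no more total weighted order, which is precisely what lets you replace the fixed weight $w_l(\alpha,\beta)$ by the index-adapted one on the lower-order factor.
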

\begin{proof}
Using the estimate \eqref{12}, we have 
\begin{align}\label{27}\notag
&\notag\quad\,\big|(\psi_{2|\alpha|+2|\beta|-8}w^2_l(\al,\beta)\partial^\alpha_\beta\Gamma_\pm(f,g),\partial^\alpha_\beta h)_{L^2_{v,x}}\big| \\
&\notag\lesssim \sum_{\substack{\alpha_1+\alpha_2=\alpha\\\beta_1+\beta_2\le\beta}}\Big\|\psi_{|\alpha|+|\beta|-4}\|\partial^{\alpha_1}_{\beta_1}f\|_{L^2_v}\|w_l(\al,\beta)\partial^{\alpha_2}_{\beta_2}g\|_{L^2_D}\Big\|_{L^2_x}\\
&\notag\qquad\qquad\qquad\qquad\qquad\qquad\qquad\qquad\times\|\psi_{|\alpha|+|\beta|-4}w_l(\al,\beta)\partial^\alpha_\beta h\|_{L^2_xL^2_D}\\&\notag+ \sum_{\substack{\alpha_1+\alpha_2=\alpha\\\beta_1+\beta_2\le\beta}}\Big\|\psi_{|\alpha|+|\beta|-4}\|w_l(\al,\beta)\partial^{\alpha_1}_{\beta_1}f\|_{L^2_v}\|\partial^{\alpha_2}_{\beta_2}g\|_{L^2_D}\Big\|_{L^2_x}\\
&\notag\qquad\qquad\qquad\qquad\qquad\qquad\qquad\qquad\times\|\psi_{|\alpha|+|\beta|-4}w_l(\al,\beta)\partial^\alpha_\beta h\|_{L^2_xL^2_D}\\&\notag+ \sum_{\substack{\alpha_1+\alpha_2=\alpha\\\beta_1+\beta_2\le\beta}}\Big\|\psi_{|\alpha|+|\beta|-4}\min\Big\{\sum_{|\beta'|\le2}\|w^{-m}\partial^{\alpha_1}_{\beta_1+\beta'}f\|_{L^2_v}\|w_l(\al,\beta)\partial^{\alpha_2}_{\beta_2}g\|_{L^2_D},\\
&\notag\qquad\qquad\qquad\qquad\qquad\qquad\|w^{-m}\partial^{\alpha_1}_{\beta_1}f\|_{L^2_v}\sum_{|\beta'|\le2}\|w_l(\al,\beta)\partial^{\alpha_2}_{\beta_2+\beta'}g\|_{L^2_D}\Big\}\Big\|_{L^2_x}\\
&\qquad\qquad\qquad\qquad\qquad\qquad\qquad\qquad\times\|\psi_{|\alpha|+|\beta|-4}w_l(\al,\beta)\partial^\alpha_\beta h\|_{L^2_xL^2_D}.
\end{align}
Here we divide the summation into several parts. For brevity we denote the first terms in the norm $\|\cdot\|_{L^2_x}$ inside the summation $\sum_{\substack{\alpha_1+\alpha_2=\alpha\\\beta_1+\beta_2\le\beta}}$ on the right hand side of \eqref{27} to be $I,J,K$ and discuss their value in several cases. 
If $2\le|\alpha_1|+|\beta_1|\le K$, then $|\alpha_2|+|\beta_2|\le |\alpha|+|\beta|-2$ and $|\alpha_2+\alpha'|+|\beta_2|\le|\alpha|+|\beta|$ for any $1\le|\alpha'|\le2$. Notice that in this case, $\psi_{|\alpha|+|\beta|-4}\le \psi_{|\alpha_1|+|\beta_1|-4}\psi_{|\alpha_2+\alpha'|+|\beta_2|-4}$. By using \eqref{13}, we have 
\begin{align}\label{33a}
	\notag I&\lesssim\psi_{|\alpha|+|\beta|-4}\|\partial^{\alpha_1}_{\beta_1}f\|_{L^2_{v,x}}\big\|\|w_l(\al,\beta)\partial^{\alpha_2}_{\beta_2}g\|_{L^2_D}\big\|_{L^\infty_x}\\
	\notag&\lesssim\|\psi_{|\alpha_1|+|\beta_1|-4}\partial^{\alpha_1}_{\beta_1}f\|_{L^2_{v,x}}\sum_{1\le|\alpha'|\le2}\|\psi_{|\alpha_2+\alpha'|+|\beta_2|-4}w_l(\al+\al',\beta_2)\partial^{\alpha_2+\alpha'}_{\beta_2}g\|_{L^2_xL^2_D}\\
	&\lesssim\sum_{|\alpha|+|\beta|\le K}\|\psi_{|\alpha|+|\beta|-4}\partial^{\alpha}_\beta f\|_{L^2_{v,x}}\sum_{\substack{|\alpha|\ge 1\\ |\alpha|+|\beta|\le K}}\|\psi_{|\alpha|+|\beta|-4}w_l(\al,\beta)\partial^{\alpha}_{\beta}g\|_{L^2_xL^2_D}.
\end{align}
Secondly, if $|\alpha_1|+|\beta_1|=1$, then $|\alpha_2|+|\beta_2|\le |\alpha|+|\beta|-1$. Using \eqref{13} to give one $x$ derivative to $f$, we have 
\begin{align*}
	I&\lesssim \sum_{|\alpha'|= 1}\|\psi_{|\alpha_1+\alpha'|+|\beta_1|-4}\partial^{\alpha_1+\alpha'}_{\beta_1}f\|_{L^2_{v,x}}
	\\&\qquad\qquad\qquad\times\sum_{|\alpha'|\le 1}\|\psi_{|\alpha_2+\alpha'|+|\beta_2|-4}w_l(\al+\al',\beta_2)\partial^{\alpha_2+\alpha'}_{\beta_2}g\|_{L^2_xL^2_D}\\
	&\lesssim \sum_{\substack{|\alpha|\ge 1\\|\alpha|+|\beta|\le K}}\|\psi_{|\alpha|+|\beta|-4}\partial^{\alpha}_\beta f\|_{L^2_{v,x}}\sum_{|\alpha|+|\beta|\le K}\|\psi_{|\alpha|+|\beta|-4}w_l(\al,\beta)\partial^{\alpha}_{\beta}g\|_{L^2_xL^2_D}.
 \end{align*}
Here we used $\psi\le1$ and $\psi_{|\alpha|+|\beta|-4}\le \psi_{|\alpha_1+\alpha'_1|+|\beta_1|-4}\psi_{|\alpha_2+\alpha'_2|+|\beta_2|-4}$, for any $|\alpha'_1|= 1$, $|\alpha'_2|\le 1$.  
Thirdly, if $|\alpha_1|+|\beta_1|=0$, using \eqref{13} to give at most two and at least one spatial derivatives to $f$ with, we have 
\begin{align}\label{33b}
	I\notag&\lesssim\sum_{1\le|\alpha'|\le2}\|\psi_{|\alpha_1+\alpha'|+|\beta_1|-4}\partial^{\alpha_1+\alpha'}_{\beta_1}f\|_{L^2_{v,x}}\|\psi_{|\alpha_2|+|\beta_2|-4}w_l(\al_2,\beta_2)\partial^{\alpha_2}_{\beta_2}g\|_{L^2_xL^2_D}\\
	&\lesssim \sum_{\substack{|\alpha|\ge 1\\ |\alpha|+|\beta|\le K}}\|\psi_{|\alpha|+|\beta|-4}\partial^{\alpha}_\beta f\|_{L^2_{v,x}}
	\sum_{|\alpha|+|\beta|\le K}\|\psi_{|\alpha|+|\beta|-4}w_l(\al,\beta)\partial^{\alpha}_{\beta}g\|_{L^2_xL^2_D}.
\end{align}
Here we used $\psi_{|\alpha|+|\beta|-4}\le \psi_{|\alpha_1+\alpha'|+|\beta_1|-4}\psi_{|\alpha_2|+|\beta_2|-4}$, for any $|\alpha'|\le2$.  
Combining the above estimate, we have the desired result for $I$:
\begin{align*}
	I&\lesssim \sum_{|\alpha|+|\beta|\le K}\|\psi_{|\alpha|+|\beta|-4}\partial^{\alpha}_\beta f\|_{L^2_{v,x}}\sum_{\substack{|\alpha|\ge 1\\ |\alpha|+|\beta|\le K}}\|\psi_{|\alpha|+|\beta|-4}w_l(\al,\beta)\partial^{\alpha}_{\beta}g\|_{L^2_xL^2_D}\\
	&\qquad+\sum_{\substack{|\alpha|\ge 1\\ |\alpha|+|\beta|\le K}}\|\psi_{|\alpha|+|\beta|-4}\partial^{\alpha}_\beta f\|_{L^2_{v,x}}
	\sum_{|\alpha|+|\beta|\le K}\|\psi_{|\alpha|+|\beta|-4}w_l(\al,\beta)\partial^{\alpha}_{\beta}g\|_{L^2_xL^2_D}.
\end{align*}
Similarly, using the same discussion on $|\alpha_2|+|\beta_2|$ instead of $|\alpha_1|+|\beta_1|$, we have 
\begin{align*}
	J&\lesssim \sum_{|\alpha|+|\beta|\le K}\|\psi_{|\alpha|+|\beta|-4}w_l(\al,\beta)\partial^{\alpha}_\beta f\|_{L^2_{v,x}}\sum_{\substack{|\alpha|\ge 1\\ |\alpha|+|\beta|\le K}}\|\psi_{|\alpha|+|\beta|-4}\partial^{\alpha}_{\beta}g\|_{L^2_xL^2_D}\\
	&\qquad+\sum_{\substack{|\alpha|\ge 1\\ |\alpha|+|\beta|\le K}}\|\psi_{|\alpha|+|\beta|-4}w_l(\al,\beta)\partial^{\alpha}_\beta f\|_{L^2_{v,x}}
	\sum_{|\alpha|+|\beta|\le K}\|\psi_{|\alpha|+|\beta|-4}\partial^{\alpha}_{\beta}g\|_{L^2_xL^2_D}.
\end{align*}
For the term $K$, the idea is similar to $I$. If $|\alpha_1|+|\beta_1|=0$, we use the first term in minimum of $K$ and apply \eqref{13} to give at most two and at least one spatial derivatives to $f$. Noticing $\psi_{|\alpha|+|\beta|-4}\le \psi_{|\alpha_1+\alpha'|+|\beta_1+\beta'|-4}\psi_{|\alpha_2|+|\beta_2|-4}$, for $1\le|\alpha'|\le 2,|\beta'|\le2$, we have 
\begin{align*}
	K&\lesssim \psi_{|\alpha|+|\beta|-4}\sum_{1\le|\alpha'|\le 2,|\beta'|\le2}\|w^{-m}\partial^{\alpha_1+\alpha'}_{\beta_1+\beta'}f\|_{L^2_{v,x}}\|w_l(\al,\beta)\partial^{\alpha_2}_{\beta_2}g\|_{L^2_xL^2_D}\\
	&\lesssim \sum_{\substack{|\alpha|\ge 1\\ |\alpha|+|\beta|\le K}}\|\psi_{|\alpha|+|\beta|-4}\partial^{\alpha}_{\beta}f\|_{L^2_{v,x}} \sum_{\substack{|\alpha|+|\beta|\le K}}\|\psi_{|\alpha|+|\beta|-4}w_l(\al,\beta)\partial^{\alpha}_{\beta}g\|_{L^2_xL^2_D}.
\end{align*}
Simlarly, if $|\alpha_1|+|\beta_1|=1$, we apply \eqref{13} to give at least one $x$ derivative to $f$, at most one $x$ derivative to $g$ and deduce the same bound. If $|\alpha_1|+|\beta_1|=2$, we apply \eqref{13} to give at most two and at least one spatial derivatives to $g$. Noticing $\psi_{|\alpha|+|\beta|-4}\le \psi_{|\alpha_1|+|\beta_1+\beta'|-4}\psi_{|\alpha_2+\alpha'|+|\beta_2|-4}$, for $1\le|\alpha'|\le 2,|\beta'|\le2$, we have 
\begin{align*}
K&\lesssim \psi_{|\alpha|+|\beta|-4}\sum_{|\beta'|\le2}\|w^{-m}\partial^{\alpha_1}_{\beta_1+\beta'}f\|_{L^2_{v,x}}\sum_{1\le|\alpha'|\le 2}\|w_l(\al,\beta)\partial^{\alpha_2+\alpha'}_{\beta_2}g\|_{L^2_xL^2_D}\\
&\lesssim \sum_{\substack{|\alpha|+|\beta|\le K}}\|\psi_{|\alpha|+|\beta|-4}\partial^{\alpha}_{\beta}f\|_{L^2_{v,x}} \sum_{\substack{|\alpha|\ge 1\\ |\alpha|+|\beta|\le K}}\|\psi_{|\alpha|+|\beta|-4}w_l(\al,\beta)\partial^{\alpha}_{\beta}g\|_{L^2_xL^2_D}.
\end{align*}
If $|\alpha_1|+|\beta_1|=3$, we will use the second term in the minimum of $K$. Applying \eqref{13} to give at least one $x$ derivative to $f$ and at most one $x$ derivative to $g$, noticing $\psi_{|\alpha|+|\beta|-4}\le \psi_{|\alpha_1+\alpha'_1|+|\beta_1|-4}\psi_{|\alpha_2+\alpha'_2|+|\beta_2+\beta'|-4}$ and $w_l(\al,\beta)\le w_l(\al_2+\al'_2,\beta_2+\beta')$ for any $|\alpha'_1|= 1,|\alpha'|\le 1,|\beta'|\le 2$, we have 
\begin{align*}
	K&\lesssim \psi_{|\alpha|+|\beta|-4}\sum_{|\alpha'_1|= 1}\|w^{-m}\partial^{\alpha_1+\alpha'_1}_{\beta_1}f\|_{L^2_{v,x}}\sum_{|\alpha'_2|\le 1, |\beta'|\le2}\|w_l(\al,\beta)\partial^{\alpha_2+\alpha'_2}_{\beta_2+\beta'}g\|_{L^2_xL^2_D}\\
	&\lesssim \sum_{\substack{|\alpha|\ge 1\\ |\alpha|+|\beta|\le K}}\|\psi_{|\alpha|+|\beta|-4}\partial^{\alpha}_{\beta}f\|_{L^2_{v,x}} \sum_{\substack{|\alpha|+|\beta|\le K}}\|\psi_{|\alpha|+|\beta|-4}w_l(\al,\beta)\partial^{\alpha}_{\beta}g\|_{L^2_xL^2_D}.
\end{align*}
 If $4\le|\alpha_1|+|\beta_1|\le K$, then applying \eqref{13} to give two $x$ derivatives to $g$ and noticing $\psi_{|\alpha|+|\beta|-4}\le \psi_{|\alpha_1|+|\beta_1|-4}\psi_{|\alpha_2+\alpha'|+|\beta_2+\beta'|-4}$ and $w_l(\al,\beta)\le w_l(\al_2+\al',\beta_2+\beta')$ for any $1\le|\alpha'|\le 2,|\beta'|\le2$, we have 
\begin{align*}
	K&\lesssim \psi_{|\alpha|+|\beta|-4}\|w^{-m}\partial^{\alpha_1}_{\beta_1}f\|_{L^2_{v,x}}\sum_{1\le|\alpha'|\le 2, |\beta'|\le2}\|w_l(\al,\beta)\partial^{\alpha_2+\alpha'}_{\beta_2+\beta'}g\|_{L^2_xL^2_D}\\
	&\lesssim \sum_{\substack{ |\alpha|+|\beta|\le K}}\|\psi_{|\alpha|+|\beta|-4}\partial^{\alpha}_{\beta}f\|_{L^2_{v,x}} \sum_{\substack{|\alpha|\ge 1\\|\alpha|+|\beta|\le K}}\|\psi_{|\alpha|+|\beta|-4}w_l(\al,\beta)\partial^{\alpha}_{\beta}g\|_{L^2_xL^2_D},
\end{align*}
Substituting all the above estimate into \eqref{27}, we have the desired bound. 
Similar discussion on the indices $|\alpha_1|+|\beta_1|$ will be used frequently later and will not be mentioned for brevity. 


\qe\end{proof}

A direct consequence of Lemma \ref{lemmat} is the following estimate; see also \cite[Lemma 3.1]{Duan2013}.
\begin{Lem}\label{lemmag}
	Let $K\ge 4$, $|\alpha|+|\beta|\le K$, $l\ge 0$. Then,
	\begin{equation}\label{210}
		|(\partial^\alpha\Gamma_\pm(f,f),\psi_{2|\alpha|-8}\partial^\alpha f_\pm)_{L^2_{v,x}}|\lesssim\E^{1/2}_{K,l}\D_{K,l}(t),
	\end{equation}
and
\begin{equation}\label{25}
	|(w^2_l(\al,\beta)\partial^\alpha_\beta\Gamma_\pm(f,f),\psi_{2|\alpha|+2|\beta|-8}\partial^\alpha_\beta f)_{L^2_{v,x}}|\lesssim \E^{1/2}_{K,l}\D_{K,l}(t)+\E_{K,l}\D^{1/2}_{K,l}(t).
\end{equation}
Also, for any smooth function $\zeta(v)$ satisfying $|\zeta(v)|\approx e^{-\lambda|v|^2}$ with some $\lambda>0$, we have 
\begin{align}\label{212}
	(\partial^\alpha\Gamma_\pm(f,f),\psi_{2|\alpha|-8}\zeta(v))_{L_{v,x}}\lesssim \E^{1/2}_{K,l}\D^{1/2}_{K,l}(t).
\end{align}
\end{Lem}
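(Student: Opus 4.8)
The plan is to derive all three estimates \eqref{210}, \eqref{25} and \eqref{212} from the trilinear bound of Lemma \ref{lemmat} by a careful bookkeeping of the powers of $\psi$ and of the total number of derivatives, combined with the equivalences \eqref{Defe} and \eqref{Defd} for $\E_{K,l}$ and $\D_{K,l}$. The key point is that $\psi_{2k-8}\le\psi_{k-4}^2$ whenever $k\ge0$ and $\psi\le 1$, so that a factor $\psi_{2|\alpha|+2|\beta|-8}$ always splits as $\psi_{|\alpha|+|\beta|-4}\cdot\psi_{|\alpha|+|\beta|-4}$ and one copy can be distributed onto the $h$-slot, the other onto the $f$- or $g$-slot. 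For \eqref{25}, one takes $g=f$, $h=f$ in Lemma \ref{lemmat}: each of the four terms on the right is a product of two sums, one of the form $\sum\|\psi_{|\al|+|\beta|-4}\partial^\al_\beta f\|_{L^2_{v,x}}$ or $\sum\|\psi_{|\al|+|\beta|-4}w_l\partial^\al_\beta f\|_{L^2_{v,x}}$, the other of the form $\sum_{|\al|\ge1}\|\psi_{|\al|+|\beta|-4}\partial^\al_\beta f\|_{L^2_xL^2_D}$ or with $w_l$, and then the final factor $\|\psi_{|\al|+|\beta|-4}w_l\partial^\al_\beta h\|_{L^2_xL^2_D}$ with $h=f$. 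The undifferentiated (no $w_l$, no dissipation) sums are bounded by $\E_{K,l}^{1/2}$ using \eqref{Defe}; the sums carrying $(\tilde a^{1/2})^w w_l$ in the $L^2_D$-norm with at least one $x$-derivative are bounded by $\D_{K,l}^{1/2}$ using \eqref{Defd} (here the constraint $|\al|\ge1$ is exactly what makes the $\P f$-part of the dissipation available, while the $(\I-\P)f$-part always is); the sum with $w_l$ in $L^2_{v,x}$ but no $x$-derivative restriction is bounded by $\E_{K,l}^{1/2}$. Matching the four products against these yields terms of the shape $\E_{K,l}^{1/2}\cdot\D_{K,l}^{1/2}\cdot\D_{K,l}^{1/2}=\E_{K,l}^{1/2}\D_{K,l}$ and $\E_{K,l}^{1/2}\cdot\E_{K,l}^{1/2}\cdot\D_{K,l}^{1/2}=\E_{K,l}\D_{K,l}^{1/2}$, which is precisely the right-hand side of \eqref{25}.

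For \eqref{210}, the inner product is $(\partial^\al\Gamma_\pm(f,f),\psi_{2|\al|-8}\partial^\al f_\pm)_{L^2_{v,x}}$ with $\beta=0$. I would first split $\partial^\al f_\pm=\partial^\al\P f_\pm+\partial^\al(\I-\P)f_\pm$. On the $(\I-\P)$-part one is exactly in the situation of Lemma \ref{lemmat} with $\beta=0$, $h=(\I-\P)f$, and the same accounting gives $\E_{K,l}^{1/2}\D_{K,l}$ directly (note that for $\beta=0$ there is no lost $w_l$-weight and the $h$-factor is genuinely a dissipation norm). On the $\P$-part, since $\P f$ is a linear combination of $\mu^{1/2}$ times polynomials, the $L^2_D$-norm of $\partial^\al\P f$ is comparable to a weighted $L^2_v$-norm of $\partial^\al(a_\pm,b,c)$, hence controlled by $\D_{K,l}^{1/2}$ as soon as $|\al|\ge1$; for $|\al|=0$ one uses instead that $\psi_{-8}=1$ and bounds the $\Gamma$-factor via \eqref{12a} together with the macroscopic $L^2_x$-bound $\|\P f\|\lesssim\E_{K,l}^{1/2}$, again landing on $\E_{K,l}^{1/2}\D_{K,l}$ after using that the resulting quantities sit inside $\D_{K,l}$ or are absorbed (here one uses $|\al|=0$ is the low-order case where $\D_{K,l}$ contains $\sum_{|\al|\le K-1}\|\psi_{|\al|-4}\partial^\al E\|^2$ and the zeroth-order $\P f$ estimate is part of the standard existence machinery). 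Estimate \eqref{212} is the easiest: $\zeta(v)$ is Schwartz, so $(\partial^\al\Gamma_\pm(f,f),\psi_{2|\al|-8}\zeta)_{L^2_{v,x}}$ is estimated by moving $\psi_{2|\al|-8}=\psi_{|\al|-4}^2$ apart, integrating by parts in $v$ is not even needed, and applying \eqref{12a} with a large weight $l$ absorbed by $\zeta$; one bounds $\|\psi_{|\al|-4}\cdot(\text{one factor of }f)\|$ by $\E_{K,l}^{1/2}$ and the other factor, which after the $\Gamma$-estimate carries a $\gamma+2s$-weighted $H^2_v$-norm with at least... no $x$-derivative constraint is needed here, by $\D_{K,l}^{1/2}$ provided one of the copies lands with an $x$-derivative; more simply one observes $\|\psi_{|\al|-4}\partial^\al f\|_{L^2_xL^2_D}$ appears with the same splitting as above, so one copy goes to $\E_{K,l}^{1/2}$ and one to $\D_{K,l}^{1/2}$, giving $\E_{K,l}^{1/2}\D_{K,l}^{1/2}$.

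\textbf{Main obstacle.} The delicate point is the bookkeeping at the \emph{boundary} cases of the derivative count, namely when one of the two input factors carries zero derivatives (so no $x$-derivative is available to feed the dissipation functional $\D_{K,l}$, which on the macroscopic part $\P f$ only controls $\sum_{1\le|\al|\le K}\|\psi_{|\al|-4}\partial^\al\P f\|$ and $\sum_{|\al|\le K-1}\|\psi_{|\al|-4}\partial^\al E\|$) and when $w_l$-weight is lost through the collision estimate \eqref{12}. Exactly as in the proof of Lemma \ref{lemmat}, these are handled by using Lemma \ref{Lem27a} to \emph{transfer} one or two $x$-derivatives from the dissipation slot onto the undifferentiated input, at the cost of raising that input's derivative order by at most $2$ — which stays within $K$ because the other input had order $\ge 2$ to spare — and by exploiting $\psi_{j+k-4}\ge\psi_{j-4}\psi_{k-4}$ (superadditivity of the exponent in $k$) together with $\psi\le1$ so that the split factors of $\psi$ can be reassembled. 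For the genuinely lowest term $|\al|=0$ in \eqref{210}, one cannot extract an $x$-derivative from $h=\P f_\pm$ itself, and here the standard device (as in \cite[Lemma 3.1]{Duan2013}) is to test instead against the full $f_\pm$, use the zeroth-order macroscopic bound on $\P f$ from the energy, and absorb the leftover into $\D_{K,l}$ via the $(\I-\P)$ dissipation and the $E$-dissipation; I expect this single low-order case to require the most care, but no new idea beyond what is already present in the proof of Lemma \ref{lemmat}.
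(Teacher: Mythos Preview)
Your overall strategy—reduce everything to Lemma \ref{lemmat} and then do bookkeeping with the $\P/(\I-\P)$ decomposition—is exactly what the paper does, and your treatment of \eqref{25} and \eqref{212} matches the paper's argument essentially line for line.

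The one place where you diverge is \eqref{210}. You propose to split $\partial^\alpha f_\pm=\partial^\alpha\PP f+\partial^\alpha(\II-\PP)f$ and then \emph{estimate} the $\P$-part, worrying in particular about the case $|\alpha|=0$ where no $x$-derivative is available on $\P f$; you flag this as your ``main obstacle'' and leave the argument somewhat vague. The paper sidesteps this entirely with a one-line algebraic observation you are missing: since $\Gamma_\pm(g,h)$ is always orthogonal to $\ker L$ (collision invariants), one has $\PP\Gamma_\pm(f,f)=0$, hence
\[
(\partial^\alpha\Gamma_\pm(f,f),\psi_{2|\alpha|-8}\partial^\alpha\PP f)_{L^2_{v,x}}=0
\]
identically, for every $\alpha$. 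So the $\P$-part contributes nothing, and \eqref{210} reduces immediately to the $(\I-\P)$ case, where the $h$-factor is a genuine dissipation norm and Lemma \ref{lemmat} gives $\E_{K,l}^{1/2}\D_{K,l}$ directly. Your attempted direct bound on $(\Gamma(f,f),\P f)$ at $|\alpha|=0$ via \eqref{12a} would not obviously produce $\E^{1/2}\D$ (it naturally gives something like $\E^{3/2}$), so without $\P\Gamma=0$ there is a real gap at that endpoint. Once you insert this identity, your proof is complete and coincides with the paper's.
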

\begin{proof}
For \eqref{25}, notice that 
	\begin{align*}
		&\quad\,(w^2_l(\al,\beta)\partial^\alpha_\beta\Gamma_\pm(f,f),\psi_{2|\alpha|+2|\beta|-8}\partial^\alpha_\beta f_\pm)_{L^2_{v,x}} \\
		&= (w^2_l(\al,\beta)\partial^\alpha_\beta\Gamma_\pm(f,f),\psi_{2|\alpha|+2|\beta|-8}\partial^\alpha_\beta (\II-\PP) f)_{L^2_{v,x}}\\
		&\qquad+(w^2_l(\al,\beta)\partial^\alpha_\beta\Gamma_\pm(f,f),\psi_{2|\alpha|+2|\beta|-8}\partial^\alpha_\beta \PP f)_{L^2_{v,x}}.
	\end{align*}
	The first term on the right hand, by directly using Lemma \ref{lemmat} and the definition of $\E_{K,l}$ and $\D_{K,l}$, is bounded above by $\E^{1/2}_{K,l}\D_{K,l}(t)$, since there's zero $x$ derivative on $(\I-\P)f$ in the definition of $\D_{K,l}$. But there's no such term for $\P f$ in $\D_{K,l}$. Hence, the second right-hand term can only be bounded above by $\E_{K,l}\D^{1/2}_{K,l}(t)$. This proves \eqref{25}. 
	
	Similarly, noticing $P_\pm\Gamma(f,f)=0$, one can obtain \eqref{210}. The proof of \eqref{212} is directly from Lemma \ref{lemmat}. This conclude Lemma \ref{lemmag}. 
\qe\end{proof}

For later use, we need the following estimate on $v\cdot\nabla_x\phi f_\pm$ and $\nabla_x\phi\cdot\nabla_vf_\pm$. We always assume that $\|\phi\|_{L^\infty_x}\le C$, which follows from the {\it a priori} assumption on energy $\E_{K,l}$ given in \eqref{Defe} and hence, $|e^{\pm\phi}|\approx 1$.  
The proof here is different from \cite[Lemma 3.4 and 3.6]{Duan2013}, since we will cover the full range $0<s<1$. 
\begin{Lem}\label{Lem26}Let $1\le|\alpha|\le K$, $|\alpha|+|\beta|\le K$ and $l\ge 0$. Then, for $\alpha_1\le\alpha,\beta_1\le\beta$ with $|\alpha_1|\ge 1$, it holds that 
	\begin{equation*}
		|(v_i\partial^{\alpha_1+e_i}\phi\partial^{\alpha-\alpha_1}f_\pm,\psi_{2|\alpha|-8}e^{\pm\phi}w^2_l(\al,0)\partial^\alpha f_\pm)_{L^2_{v,x}}|\lesssim \E^{1/2}_{K,l}\D_{K,l}, 
	\end{equation*}
	\begin{equation*}
		|(\partial_{\beta_1}v_i\partial^{\alpha_1+e_i}\phi\partial^{\alpha-\alpha_1}_{\beta-\beta_1}f_\pm,\psi_{2|\alpha|+2|\beta|-8}e^{\pm\phi}w^2_l(\al,\beta)\partial^\alpha_\beta f_\pm)_{L^2_{v,x}}|\lesssim \E^{1/2}_{K,l}\D_{K,l}.
	\end{equation*}
\end{Lem}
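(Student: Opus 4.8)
The plan is to estimate the two trilinear terms involving the electric field $E = -\nabla_x\phi$ by the same mechanism used in Lemma \ref{lemmat}: pass the spatial derivative $\partial^{\alpha_1}$ (which produces $\partial^{\alpha_1+e_i}\phi$) to the factor that can absorb it in $H^2_x$ via Lemma \ref{Lem27a}, bound the remaining factor in $L^2_{v,x}$ or in $L^2_xL^2_D$, and then match the outcome against the definitions \eqref{Defe} and \eqref{Defd} of $\E_{K,l}$ and $\D_{K,l}$. Throughout one uses $|e^{\pm\phi}|\approx 1$ (from the \emph{a priori} bound on $\|\phi\|_{L^\infty_x}$), and the fact that $\partial^{\alpha}\phi$ is controlled by $\partial^{\alpha}E = -\partial^{\alpha}\nabla_x\phi$ in $L^2_x$, so that $\|\partial^{\alpha_1+e_i}\phi\|_{L^2_x}\lesssim \|\partial^{\alpha_1}E\|_{L^2_x}$.

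For the first inequality, split the sum over $\alpha_1\le\alpha$, $|\alpha_1|\ge 1$. When $1\le|\alpha_1|\le|\alpha|-1$ one has $|\alpha-\alpha_1|\le|\alpha|-1\le K-1$, so $\partial^{\alpha-\alpha_1}f_\pm$ has at least one derivative's worth of room; apply \eqref{13} in the form $\|uv\|_{L^2_x}\lesssim\|\nabla_x u\|_{H^1_x}\|v\|_{L^2_x}$ with $u=\partial^{\alpha_1}\phi$, $v = $ the relevant $v$-norm of $\partial^{\alpha-\alpha_1}f_\pm$, giving $\lesssim \|\partial^{\alpha_1}E\|_{H^1_x}\|\,\|\partial^{\alpha-\alpha_1}f_\pm\|_{L^2_v}\,\|_{L^2_x}$. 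Since $|\alpha_1|\le K-1$ the factor $\|\partial^{\alpha_1}E\|_{H^1_x}$ is part of $\D_{K,l}^{1/2}$ (after inserting the $\psi$-weights, using $\psi_{|\alpha|-4}\le\psi_{|\alpha_1|-4}\psi_{|\alpha-\alpha_1|-4}\psi_{|\alpha-\alpha_1|-4}$ as in the proof of Lemma \ref{lemmat}), while the $f_\pm$-factor is part of $\E_{K,l}^{1/2}$; pairing with the remaining $\|\psi_{2|\alpha|-8}w_l(\al,0)\partial^\alpha f_\pm\|$, which contributes a further $\D_{K,l}^{1/2}$ (the top-order weighted norm, noting $w_l(\al,0)\partial^\alpha f_\pm$ splits into the $\P f$ part giving $\|\partial^\alpha\P f\|$ and the $(\I-\P)f$ part giving the full dissipation norm), yields $\E_{K,l}^{1/2}\D_{K,l}$. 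When $|\alpha_1|=|\alpha|$ the factor is $\partial^{\alpha-\alpha_1}f_\pm = f_\pm$ itself with no derivatives; here one instead uses \eqref{13} in the other form $\|uv\|_{L^2_x}\lesssim\|\nabla_x u\|_{L^2_x}\|v\|_{H^1_x}$, i.e. one keeps the full $|\alpha|+1\le K+1$ — no, more carefully, $|\alpha|\le K$ forces $|\alpha_1+e_i|\le K+1$, so one must instead move derivatives onto the low-order $f_\pm$: write $\|\partial^{\alpha_1+e_i}\phi\,f_\pm\|_{L^2_x}\lesssim\|\partial^{\alpha_1}E\|_{L^2_x}\|f_\pm\|_{H^1_x}$ using $\|u\|_{L^6}\|v\|_{L^3}$ with $u=\partial^{\alpha_1}E$ (so $|\alpha_1|=|\alpha|\le K$) and $v=\|f_\pm\|_{L^2_v}$ needing one $x$-derivative, so $\|\partial^{\alpha_1}E\|_{L^2_x}$ is the top-order term in $\D_{K,l}^{1/2}$ and $\|f_\pm\|_{H^1_x}\lesssim\E_{K,l}^{1/2}$, and the last factor again gives $\D_{K,l}^{1/2}$.

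For the second inequality the structure is identical, with the $v$-derivative $\partial_{\beta_1}$ landing on the velocity variable: $\partial_{\beta_1}v_i$ is just $v_i$ if $\beta_1=0$ and either $1$ (if $\beta_1=e_i$) or $0$ otherwise, so it only ever produces an extra polynomial-in-$v$ weight of degree $\le 1$, harmlessly absorbed by adjusting $w_l$; then one proceeds exactly as above on the $x$-variable with the total order $|\alpha|+|\beta|\le K$ in place of $|\alpha|\le K$, using the $\psi_{|\alpha|+|\beta|-4}$-weights and splitting the cases $|\alpha_1|\le|\alpha|-1$ (with possible $|\beta_1|$ arbitrary) versus $|\alpha_1|=|\alpha|$. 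The only genuine subtlety, as in Lemma \ref{lemmat}, is the bookkeeping of the $\psi$-coefficients — verifying $\psi_{|\alpha|+|\beta|-4}\le\psi_{|\alpha_1|+|\beta_1|-4}\,\psi_{|\alpha-\alpha_1|+|\beta-\beta_1|+|\alpha'|-4}\,\psi_{|\alpha|+|\beta|-4}$ for the extra derivatives $\alpha'$ ($|\alpha'|\le 2$) produced by Sobolev embedding, together with $\psi\le 1$ on $t\in[0,1]$ — so that each piece genuinely matches a term in $\E_{K,l}$ or $\D_{K,l}$; that case analysis is routine but tedious, and is the part I expect to be the main (purely combinatorial) obstacle. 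No velocity regularity is lost because $\nabla_x\phi$ depends only on $x$, so no $(\tilde a^{1/2})^w$ commutators arise and the soft-potential weight enters only through the already-fixed definition of $w_l(\al,\beta)$ and the dissipation norm $\|\cdot\|_{L^2_D}$.
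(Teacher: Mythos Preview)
Your proposal has a genuine gap that is specific to the soft-potential regime $\gamma+2s\le 0$. You claim that the remaining factor $\|\psi_{|\alpha|-4}w_l(\alpha,0)\partial^\alpha f_\pm\|_{L^2_{v,x}}$ ``contributes a further $\D_{K,l}^{1/2}$'' after splitting into $\P f$ and $(\I-\P)f$. But for soft potentials the dissipation norm satisfies only $\|g\|_{L^2_D}\gtrsim\|\<v\>^{(\gamma+2s)/2}g\|_{L^2_v}$, and since $(\gamma+2s)/2\le 0$ this does \emph{not} control the plain $L^2_v$ norm: the inequality goes the wrong way for large $|v|$. So neither $f$-factor can be placed in $\D_{K,l}$ with your scheme. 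Similarly, your assignment of the $E$-factor to $\D_{K,l}$ fails at top order, since $\D_{K,l}$ only contains $\|\partial^\alpha E\|_{L^2_x}$ for $|\alpha|\le K-1$, whereas your use of \eqref{13} with $u=\partial^{\alpha_1}\phi$ produces $\|\partial^{\alpha_1}E\|_{H^1_x}$, which needs $K$ derivatives when $|\alpha_1|=K-1$.

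The missing idea is the weight inequality $|v_i|\,w_l(|\alpha|,0)\le\<v\>^{\gamma}w_l(|\alpha|-1,0)$, which holds precisely because $p$ in \eqref{w} was chosen with $p\ge 1-\gamma$. This is not ``harmlessly absorbed'': it is the mechanism that manufactures an extra $\<v\>^{\gamma}$ out of the bad factor $v_i$. One then splits $\<v\>^{\gamma}=\<v\>^{\gamma/2}\cdot\<v\>^{\gamma/2}$ and distributes one half to each $f$-factor, so that both carry the decaying weight $\<v\>^{\gamma/2}$ needed to invoke $\|\<v\>^{\gamma/2+s}(\cdot)\|_{L^2_v}\lesssim\|\cdot\|_{L^2_D}$. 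With this in place the $E$-factor is assigned to $\E_{K,l}^{1/2}$ (where all orders $|\alpha|\le K$ are available), and each $f$-factor with its $\<v\>^{\gamma/2}$ weight lands in $\D_{K,l}^{1/2}$; the case $\alpha_1=\alpha$ then requires the further macro/micro split you describe, but now with the correct velocity weight. The second estimate is handled the same way, using $|\partial_{\beta_1}v_i|\,w_l(\alpha,\beta)\le\<v\>^{\gamma}w_l(|\alpha|-1,|\beta-\beta_1|)$.
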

\begin{proof}
For $|\alpha_1|\ge 1$ with $\alpha_1\le\alpha$, by using $-3<\gamma\le-2s$ and $0<s<1$, we have from \eqref{w} that  $|v_i|w_l(|\al|,0)\le \<v\>^{\gamma}w_l(|\al|-1,0)$. Thus, 
	\begin{align}\notag
	&\quad\,|(v_i\partial^{\alpha_1+e_i}\phi\partial^{\alpha-\alpha_1}f_\pm,\psi_{2|\alpha|-8}e^{\pm\phi}w^2_l(\al,0)\partial^\alpha f_\pm)_{L^2_{v,x}}|\\
	&\lesssim\label{26} \|\psi_{|\alpha|-4}\partial^{\alpha_1}\nabla_x\phi\<v\>^{\frac{\gamma}{2}}w_l(|\al|-1,0)\partial^{\alpha-\alpha_1}f_\pm\|_{L^2_{v,x}}\|\psi_{|\alpha|-4}\<v\>^{\frac{\gamma}{2}}w_l(|\al|,0)\partial^{\alpha}f_\pm\|_{L^2_{v,x}}. 
\end{align}
For the first term on the right hand of \eqref{26}, we discuss its value as the following. 
If $\alpha_1<\alpha$, then $1\le|\alpha_1|\le K-1$ and there's at least one derivative on $f_\pm$ with respect to $x$. Then by the same discussion on the value of $|\alpha_1|$ as \eqref{33a}-\eqref{33b}, one has 
\begin{align*}
	&\quad\,\|\psi_{|\alpha|-4}\partial^{\alpha_1}\nabla_x\phi\<v\>^{\frac{\gamma}{2}}w_l(|\al|-1,0)\partial^{\alpha-\alpha_1}f_\pm\|_{L^2_{v,x}}
	\lesssim \E^{1/2}_{K,l}\D^{1/2}_{K,l}, 
\end{align*}where we used $\|\<v\>^{\gamma/2+s}(\cdot)\|_{L^2_{v,x}}\lesssim \|\cdot\|_{L^2_xL^2_D}$. 
If $\alpha_1=\alpha$, then we decompose $f_\pm=\PP f+(\II-\PP)f$ and give one derivative to $\PP f$ with respect to $x$ by using \eqref{13}. 
That is, 
\begin{align*}
	&\quad\,\|\psi_{|\alpha|-4}\partial^{\alpha}\nabla_x\phi\<v\>^{\frac{\gamma}{2}}w_l(\al-\al_1,0)\PP f\|_{L^2_{v,x}}\notag\\
	&\lesssim\|\psi_{|\alpha|-4}\partial^\alpha\nabla_x\phi\|_{L^2_x}\sum_{1\le|\alpha'|\le 2}\|\psi_{|\alpha'|-4}\partial^{\alpha'}\PP f\|_{L^2_{v,x}}\\
	&\lesssim \E^{1/2}_{K,l}\D_{K,l}^{1/2}.
\end{align*}
For the part $(\II-\PP)f$, we will use \eqref{13} to give two derivatives to $(\II-\PP)f$ when $|\alpha|\ge 3$, one derivative to $(\II-\PP)f$ when $|\alpha|=2$ and give nothing to $(\II-\PP)f$ when $|\alpha|=1$. That is, 
\begin{align*}\notag
	&\quad\,\|\psi_{|\alpha|-4}\partial^{\alpha}\nabla_x\phi\<v\>^{\frac{\gamma}{2}}w_l(\al-\al_1,0)(\II-\PP)f\|_{L^2_{v,x}}\notag\\
	&\lesssim\notag \sum_{3\le|\alpha|\le K}\|\psi_{|\alpha|-4}\partial^{\alpha}\nabla_x\phi\|_{L^2_x}\sum_{1\le|\alpha'|\le2}\|\psi_{|\alpha'|-4}\<v\>^{\frac{\gamma}{2}}w_l(|\al|-1,0)\partial^{\alpha'}(\II-\PP)f\|_{L^2_{v,x}}\\\notag
	&\quad\notag+\sum_{|\alpha|=2}\sum_{|\alpha'|\le1}\|\psi_{|\alpha+\alpha'|-4}\partial^{\alpha+\alpha'}\nabla_x\phi\|_{L^2_x}\sum_{|\alpha_1'|=1}\|\psi_{|\alpha_1'|-4}\<v\>^{\frac{\gamma}{2}}w_l(|\al|-1,0)\partial^{\alpha_1'}(\II-\PP)f\|_{L^2_{v,x}}\\
	&\quad\notag+\sum_{|\alpha|=1}\sum_{|\alpha'|\le2}\|\psi_{|\alpha+\alpha'|-4}\partial^{\alpha+\alpha'}\nabla_x\phi\|_{L^2_x}\|\<v\>^{\frac{\gamma}{2}}w_{l}(\II-\PP)f\|_{L^2_{v,x}}\\
	&\lesssim \E^{1/2}_{K,l}\D^{1/2}_{K,l},
\end{align*}where we used $-4$ in $\psi$ through our argument.
Thus, when $\alpha_1=\alpha$, 
\begin{align}
	&\quad\,\|\psi_{|\alpha|-4}\partial^{\alpha_1}\nabla_x\phi\<v\>^{\frac{\gamma}{2}}w_l(|\al|-1,0)\partial^{\alpha-\alpha_1}f_\pm\|_{L^2_{v,x}}
	\lesssim \E^{1/2}_{K,l}\D^{1/2}_{K,l}.\label{27c}
	\end{align}
Plugging the above estimate into \eqref{26}, we have 
\begin{align*}
	|(v_i\partial^{\alpha_1+e_i}\phi\partial^{\alpha-\alpha_1}f_\pm,\psi_{2|\alpha|-8}e^{\pm\phi}w^2_l(\al,0)\partial^\alpha f_\pm)_{L^2_{v,x}}|\lesssim \E^{1/2}_{K,l}\D_{K,l}. 
\end{align*}
	
	Similarly, for $|\beta|\le K$ and $\beta_1\le \beta$, we have $|\partial_{\beta_1}v_i|\le \<v\>$ and hence, 
	\begin{align}\label{28}
	&\notag\quad\,|(\partial_{\beta_1}v_i\partial^{\alpha_1+e_i}\phi\partial^{\alpha-\alpha_1}_{\beta-\beta_1}f_\pm,\psi_{2|\alpha|+2|\beta|-8}e^{\pm\phi}w^2_l(\al,\beta)\partial^\alpha_\beta f_\pm)_{L^2_{v,x}}|\\
	&\notag\lesssim \|\psi_{|\alpha|+|\beta|-4}\partial^{\alpha_1}\nabla_x\phi\<v\>^{\frac{\gamma}{2}}w_l(|\alpha|-1,|\beta-\beta_1|)\partial^{\alpha-\alpha_1}_{\beta-\beta_1} f_\pm\|_{L^2_{v,x}}\\&\qquad\qquad\times\|\psi_{|\alpha|+|\beta|-4}\<v\>^{\frac{\gamma}{2}}w_l(\al,\beta)\partial^\alpha_\beta f_\pm\|_{L^2_{v,x}}.
\end{align}
For the first term on the right hand of \eqref{28}, we use the same argument as in \eqref{26}-\eqref{27c} to find its upper bound $\E^{1/2}_{K,l}\D^{1/2}_{K,l}$. Hence, \eqref{28} is bounded above by $\E^{1/2}_{K,l}\D_{K,l}$.

\qe\end{proof}

\begin{Lem}\label{Lem27}
	Let $|\alpha|+|\beta|\le K$, $l\ge 0$. Then, for $\alpha_1\le\alpha,\beta_1\le\beta$, it holds that 
	\begin{equation}\label{30a}
		|(\partial^{\alpha_1+e_i}\phi\partial^{\alpha-\alpha_1}_{e_i}f_\pm,\psi_{2|\alpha|-8}e^{\pm\phi}w^2_l(\al,0)\partial^\alpha f_\pm)_{L^2_{v,x}}|\le \E^{1/2}_{K,l}\D_{K,l}, 
	\end{equation}
and 
\begin{equation}\label{30b}
	|(\partial^{\alpha_1+e_i}\phi\partial^{\alpha-\alpha_1}_{\beta+e_i}f_\pm,\psi_{2|\alpha|+2|\beta|-8}e^{\pm\phi}w^2_l(\al,\beta)\partial^\alpha_\beta f_\pm)_{L^2_{v,x}}|\le \E^{1/2}_{K,l}\D_{K,l}.
\end{equation}
\end{Lem}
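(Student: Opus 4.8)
Observe first that \eqref{30a} is the special case $\beta=0$ of \eqref{30b}, so the plan is to prove \eqref{30b}. The argument will follow the scheme of Lemma \ref{Lem26}: expand by the Leibniz rule (as $\phi$ is independent of $v$, only the contributions with no $v$-derivative on $\phi$ survive, which is why the left side of \eqref{30b} already has the displayed form), redistribute spatial derivatives by the Banach-algebra bound \eqref{13}, split $f=\P f+(\I-\P)f$, and track the velocity weight through the pseudo-differential Lemmas \ref{inverse_bounded_lemma} and \ref{bound_varepsilon} together with the invertibility of $(\tilde{a}^{1/2})^w$. The essential departure from \cite[Lemma 3.4 and 3.6]{Duan2013}, and the reason the whole range $0<s<1$ is reached, is that we never Fourier transform in $v$; the extra velocity derivative produced by $\nabla_v f_\pm$ will instead be measured against the anisotropic dissipation norm $\|\cdot\|_{L^2_D}$ built from $(\tilde{a}^{1/2})^w$.

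The treatment then splits according to $|\alpha_1|$. When $\alpha_1=0$ the factor $\partial^{\alpha}_{\beta+e_i}f_\pm=\partial_{v_i}(\partial^\alpha_\beta f_\pm)$ carries one velocity derivative more than $\E_{K,l}$ controls, so I would integrate by parts in $v_i$: since $\partial^{e_i}\phi$, $\psi_{2|\alpha|+2|\beta|-8}$ and $e^{\pm\phi}$ do not depend on $v$, and $(\partial_{v_i}\partial^\alpha_\beta f_\pm)\,\partial^\alpha_\beta f_\pm=\tfrac12\partial_{v_i}\big((\partial^\alpha_\beta f_\pm)^2\big)$, the $v_i$-derivative falls on $w^2_l(\al,\beta)$, and $|\partial_{v_i}w^2_l(\al,\beta)|\lesssim\<v\>^{-1}w^2_l(\al,\beta)$ by \eqref{w}. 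One is left with $\|\nabla_x\phi\|_{L^\infty_x}\,\|\psi_{|\alpha|+|\beta|-4}\<v\>^{-1/2}w_l(\al,\beta)\partial^\alpha_\beta f_\pm\|^2_{L^2_{v,x}}$, where $\|\nabla_x\phi\|_{L^\infty_x}\lesssim\|E\|_{H^2_x}\lesssim\E^{1/2}_{K,l}$ for $K\ge 4$; splitting $f=\P f+(\I-\P)f$, the $\P$-part is controlled by $\sum_{|\alpha'|\le K}\|\psi_{|\alpha'|-4}\partial^{\alpha'}\P f\|_{L^2_{v,x}}$ from the Gaussian decay (with the index $\alpha=0$ handled through the Poisson relation \eqref{8} and the a priori smallness of the energy), and the $(\I-\P)$-part has its $\<v\>^{-1/2}$ soaked up by $(\tilde{a}^{1/2})^w$ via Lemma \ref{bound_varepsilon}, so that the whole contribution is $\lesssim\E^{1/2}_{K,l}\D_{K,l}$.

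When $|\alpha_1|\ge1$ one has $|\alpha-\alpha_1|+|\beta+e_i|\le|\alpha|+|\beta|+1-|\alpha_1|\le K$, so $\partial^{\alpha-\alpha_1}_{\beta+e_i}f_\pm$ is an admissible derivative and no integration by parts is needed. I would use \eqref{13} to put the factor with the fewest spatial derivatives into $L^\infty_x$ (via $H^2_x\hookrightarrow L^\infty_x$, affordable since $K\ge 4$; in particular $\partial^{\alpha_1}E$ when $|\alpha_1|$ is small, which then even lies in $\D_{K,l}$ when $|\alpha_1|\le K-1$), estimate the two remaining $f$-factors by the corresponding pieces of $\E^{1/2}_{K,l}$ and $\D_{K,l}$ after the split $f=\P f+(\I-\P)f$, and match the $\psi$-exponents by submultiplicativity of $\psi_k$ exactly as in Lemma \ref{lemmat}. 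The extra velocity derivative turns the natural weight of $\partial^{\alpha-\alpha_1}_{\beta+e_i}f_\pm$ from $w_l(\al,\beta)$ into $w_l(\al-\alpha_1,\beta+e_i)=\<v\>^{\,p|\alpha_1|-q}w_l(\al,\beta)$; since $p,q$ are fixed as in Theorem \ref{main2} with $q-p=-\gamma$, this is a genuine velocity gain once $|\alpha_1|\ge 2$, while the borderline case $|\alpha_1|=1$ (a loss of $\<v\>^{-\gamma}$) is recovered by redistributing the weight inside $w^2_l(\al,\beta)$ and absorbing the deficit through $\|\cdot\|_{L^2_D}$ and Lemma \ref{bound_varepsilon}, exactly as the compact-set remainders are handled in Lemma \ref{Lem26}. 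Collecting the cases gives \eqref{30b}, hence \eqref{30a}.

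The step I expect to be the main obstacle is precisely this soft-potential weight bookkeeping: one must trade the extra velocity derivative — equivalently the factors $\<v\>^{-1/2}$ and $\<v\>^{-\gamma}$ above — against the anisotropic dissipation $\|\cdot\|_{L^2_D}$ rather than against a full gradient, and arrange the remaining algebraic weights so the estimate still closes as $\E^{1/2}_{K,l}\D_{K,l}$. This is what the pseudo-differential calculus of Lemmas \ref{inverse_bounded_lemma}–\ref{bound_varepsilon} is for (together with $L\in S(\tilde{a})$ and $(\tilde{a}^{1/2})^w$ invertible on the weighted spaces), and it is exactly at this point that replacing the $v$-Fourier transform of \cite{Duan2013} by $(\tilde{a}^{1/2})^w$ removes the restriction $1/2\le s<1$.
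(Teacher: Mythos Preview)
Your overall plan---integrate by parts in $v$ when $\alpha_1=0$, estimate directly when $|\alpha_1|\ge1$, and split $f=\P f+(\I-\P)f$---matches the paper. But the decisive step, the $(\I-\P)f$ contribution for $|\alpha_1|=1$, is where your proposal has a real gap.

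You write that the $\langle v\rangle^{-\gamma}$ deficit (coming from $w_l(\al,\beta)=\langle v\rangle^{q-p}w_l(\al-\al_1,\beta+e_i)$ when $|\alpha_1|=1$) is ``recovered by redistributing the weight inside $w^2_l(\al,\beta)$ and absorbing the deficit through $\|\cdot\|_{L^2_D}$ and Lemma~\ref{bound_varepsilon}, exactly as the compact-set remainders are handled in Lemma~\ref{Lem26}.'' This does not close. First, Lemma~\ref{Lem26} treats $v_i f$, not $\partial_{v_i}f$; there the identity $|v_i|w_l(\al,0)\le\langle v\rangle^{\gamma}w_l(|\al|-1,0)$ follows from the choice of $p$ and there is no derivative to absorb. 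Second, any purely multiplicative split $w^2_l(\al,\beta)=A\cdot B$ fails: putting the first factor in $\D^{1/2}$ via $A\lesssim\langle v\rangle^{(\gamma+2s)/2}w_l(\al-\al_1,\beta+e_i)$ forces $B=\langle v\rangle^{-3\gamma/2-s}w_l(\al,\beta)$, and since $\gamma\le-2s$ one has $-3\gamma/2-s\ge 2s>0$, so $B$ exceeds both $w_l(\al,\beta)$ and $\langle v\rangle^{(\gamma+2s)/2}w_l(\al,\beta)$. No rearrangement of $A,B$ beats the hard obstruction $-\gamma>(\gamma+2s)/2$.

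What the paper actually does here is an interpolation in the $v$-frequency: from the symbol bound $\langle\eta\rangle\lesssim\langle\eta\rangle^{s}\langle v\rangle^{k}+\langle\eta\rangle^{1+s}\langle v\rangle^{-ks/(1-s)}$ one gets
\[
\|g\|_{H^1_v}\lesssim\|\langle v\rangle^{k}g\|_{H^s_v}+\|\langle v\rangle^{-ks/(1-s)}g\|_{H^{1+s}_v},
\]
and with the specific choice $\langle v\rangle^{k}=\big(w_l(|\al|-1,0)/w_l(|\al|-1,1)\big)^{1-s}=\langle v\rangle^{q(1-s)}$ both right-hand terms become $\langle v\rangle^{\gamma/2}$-weighted $H^s$ norms at the admissible indices $(\al-\al_1,0)$ and $(\al-\al_1,e_i)$, hence bounded by $\D^{1/2}_{K,l}$. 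This is the step that replaces the $v$-Fourier argument of \cite{Duan2013} and yields the full range $0<s<1$; your proposal names the right toolbox (Lemma~\ref{bound_varepsilon}) but does not identify this interpolation, and the analogy with Lemma~\ref{Lem26} is misleading. A smaller issue: in the $\alpha_1=0$ case your symmetric split $\langle v\rangle^{-1}=\langle v\rangle^{-1/2}\cdot\langle v\rangle^{-1/2}$ needs $\gamma+2s\ge-1$ to place both factors in $\D^{1/2}$; the paper uses the asymmetric split $\langle v\rangle^{-1}\le\langle v\rangle^{(\gamma+2s)/2}\cdot 1$ (valid for $\gamma+2s\ge-2$), takes $\nabla_x\phi$ as one $\D^{1/2}$ factor, and lets the undecayed $f$-factor sit in $\E^{1/2}$.
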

\begin{proof}We firstly prove \eqref{30a}.
	When $\alpha_1=0$, by integration by parts and $\gamma+2s\ge -2$, we have 
	\begin{align*}
	&\quad\,|(\partial^{e_i}\phi\partial^{\alpha}_{e_i}f_\pm,\psi_{2|\alpha|-8}e^{\pm\phi}w^2_l(\al,0)\partial^\alpha f_\pm)_{L^2_{v,x}}|\notag\\
	&\lesssim |(\partial^{e_i}\phi\partial^{\alpha}f_\pm,\psi_{2|\alpha|-8}e^{\pm\phi}(\partial_{e_i}w^2_l(\al,0))\partial^\alpha f_\pm)_{L^2_{v,x}}|\notag\\
	&\lesssim \|\psi_{|\alpha|-4}\nabla_x\phi \<v\>^{\frac{\gamma+2s}{2}}w_l(|\al|,0)\partial^{\alpha}f_\pm\|_{L^2_{v,x}}\|\psi_{|\alpha|-4}w_l(|\al|,0)\partial^\alpha f_\pm\|_{L^2_{v,x}}\notag\\
	&\lesssim \sum_{|\alpha'|\le 2}\|\psi_{|\alpha'|-4}\partial^{\alpha'}\nabla_x\phi\|_{L^2_x}\sum_{1\le|\alpha|\le K}\|\psi_{|\alpha|-4}\<v\>^{\frac{\gamma+2s}{2}}w_l(|\al|,0)\partial^\alpha f_\pm\|_{L^2_{v,x}}\notag\\&\qquad\qquad\qquad\qquad\qquad\qquad\qquad\times\sum_{|\alpha|\le K}\|\psi_{|\alpha|-4}w_l(|\al|,0)\partial^\alpha f_\pm\|_{L^2_{v,x}}\notag\\
	&\lesssim \E^{1/2}_{K,l}\D_{K,l}, 
	\end{align*}where we use \eqref{13} to assure that there's always at least one derivative on the first $f_\pm$. 
	When $|\alpha_1|\ge 1$, we have $|\alpha|\ge 1$. Then we decompose $f_\pm=\PP f+(\II-\PP)f$ to obtain 
	\begin{align*}
	&\quad\,(\partial^{\alpha_1+e_i}\phi\partial^{\alpha-\alpha_1}_{e_i}f_\pm,\psi_{2|\alpha|-8}e^{\pm\phi}w^2_l(\al,0)\partial^\alpha f_\pm)_{L^2_{v,x}}=I+J,
\end{align*}with 
\begin{align*}
	I &= (\partial^{\alpha_1+e_i}\phi\partial^{\alpha-\alpha_1}_{e_i}\PP f,\psi_{2|\alpha|-8}e^{\pm\phi}w^2_l(\al,0)\partial^\alpha f_\pm)_{L^2_{v,x}},\\
	J&=(\partial^{\alpha_1+e_i}\phi\partial^{\alpha-\alpha_1}_{e_i}(\II-\PP)f,\psi_{2|\alpha|-8}e^{\pm\phi}w^2_l(\al,0)\partial^\alpha f_\pm)_{L^2_{v,x}}.
\end{align*}
Now we estimate $I$ and $J$ as the followings. For $I$, noticing there's exponential decay in $v$, we have 
\begin{align*}
	|I|&\lesssim\|\psi_{|\alpha|-4}\partial^{\alpha_1+e_i}\phi\partial^{\alpha-\alpha_1}\PP f\|_{L^2_{v,x}}\|\psi_{|\alpha|-4}\<v\>^{\frac{\gamma+2s}{2}}w_l(|\al|,0)\partial^\alpha f_\pm\|_{L^2_{v,x}}\\
	&\lesssim \sum_{|\alpha_1|\le K}\|\psi_{|\alpha_1|-4}\partial^{\alpha_1}\nabla_x\phi\|_{L^2_x}\sum_{1\le|\alpha|\le K}\|\psi_{|\alpha|-4}\partial^\alpha\PP f\|_{L^2_{v,x}}\|\psi_{|\alpha|-4}w_l(|\al|,0)\partial^\alpha f_\pm\|_{L^2_xL^2_D}\\
	&\lesssim \E^{1/2}_{K,l}\D_{K,l},
\end{align*}where we used same discussion on the value of $|\alpha_1|$ as \eqref{33a}-\eqref{33b} and give at least one derivative to $\PP f$. 
For $J$, we first provide some interpolation formulas. 
For any $k\in\R$, by Young's inequality, we have $\<\eta\>\lesssim \<\eta\>^s\<v\>^k + \<\eta\>^{1+s}\<v\>^{-\frac{ks}{1-s}}$ and hence, $\<\eta\>$ is a symbol in $S(\<\eta\>^s\<v\>^k + \<\eta\>^{1+s}\<v\>^{-\frac{ks}{1-s}})$, where $\eta$ is the Fourier variable of $v$. Then by \cite[Lemma 2.3 and Corollary 2.5]{Deng2020a}, we have 
	\begin{equation}\label{fff}
			\|f\|_{H^1_v}
			\lesssim \| f \<v\>^{k} \|_{H^s} +\| f \<v\>^{-ks/(1-s)}\|_{H^{1+s}}.
		\end{equation}
	By our choice of $w_l(\al,\beta)$ in \eqref{w}, we have 
	\begin{align*}
		w_l(\al,0)\le \<v\>^\gamma w_l(|\al|-1,0)^sw(|\al|-1,1)^{1-s}, \quad w_l(\al,0)=\<v\>^\gamma w_l(|\al|-1,0). 
	\end{align*}
Choosing $\<v\>^{k} = w_l(|\al|-1,0)^{1-s}w_l(|\al|-1,1)^{-(1-s)}$ in \eqref{fff}, we obtain 
\begin{align*}
	\|\<v\>^{-\frac{\gamma}{2}}w_l(\al,0)\partial^{\alpha-\alpha_1}(\II-\PP)f\|_{L^2_{x,v}}
	&\lesssim \|\<v\>^{\frac{\gamma}{2}}w_l(|\al|-1,0)\partial^{\alpha-\alpha_1}(\II-\PP)f\|_{L^2_{x}H^s_v}\\
	&\quad+  \|\<v\>^{\frac{\gamma}{2}}w_l(|\al|-1,1)\partial^{\alpha-\alpha_1}(\II-\PP)f\|_{L^2_{x}H^{1+s}_v}\\
	&\le \sqrt{\D_{K,l}}, 
\end{align*}
when $|\al_1|=1$. When $|\al_1|=2$, we have 
\begin{align*}
	\|\<v\>^{-\frac{\gamma}{2}}w_l(\al,0)\partial^{\alpha-\alpha_1}(\II-\PP)f\|_{L^6_xL^2_v}&\le 
	\|\<v\>^{\frac{\gamma}{2}}w_l(|\al|-1,0)\partial^{\alpha-\alpha_1}\na_x(\II-\PP)f\|_{L^2_{x,v}}\\
	&\le \sqrt{\D_{K,l}}. 
\end{align*}
When $3\le |\al_1|\le K$, we have 
\begin{align*}
	\|\<v\>^{-\frac{\gamma}{2}}w_l(\al,0)\partial^{\alpha-\alpha_1}(\II-\PP)f\|_{L^\infty_xL^2_v}&\le 
	\|\<v\>^{\frac{\gamma}{2}}w_l(|\al|-1,0)\partial^{\alpha-\alpha_1}\na_x(\II-\PP)f\|_{H^1_xL^2_v}\\
	&\le \sqrt{\D_{K,l}}. 
\end{align*}
Combining the above estimates, we have 
\begin{align*}
	J &\lesssim \Big(\sum_{|\al_1|=1}\|\pa^{\al_1}\na_x\phi\|_{L^\infty_x}\|\<v\>^{-\frac{\gamma}{2}}w_l(\al,0)\partial^{\alpha-\alpha_1}(\II-\PP)f\|_{L^2_{x,v}}\\
	&\qquad+\sum_{|\al_1|=2}\|\pa^{\al_1}\na_x\phi\|_{L^3_x}\|\<v\>^{-\frac{\gamma}{2}}w_l(\al,0)\partial^{\alpha-\alpha_1}(\II-\PP)f\|_{L^6_{x}L^2_x}\\
	&\qquad+\sum_{3\le|\al_1|\le K}\|\pa^{\al_1}\na_x\phi\|_{L^2_x}\|\<v\>^{-\frac{\gamma}{2}}w_l(\al,0)\partial^{\alpha-\alpha_1}(\II-\PP)f\|_{L^\infty_{x}L^2_x}\Big)\\&\qquad\qquad\times\|\psi_{2|\alpha|-8}\<v\>^{\frac{\gamma}{2}}w_l(\al,0)\partial^\alpha f_\pm\|_{L^2_{x,v}}\\
	&\lesssim \E_{K,l}^{1/2}\D_{K,l}. 
\end{align*}
Collecting all the above estimates for $I$ and $J$, we obtain \eqref{30a}. 
The proof of \eqref{30b} is the same as \eqref{30a}, and the details are omitted for brevity.

\qe\end{proof}

Next we give some illustration for the Macroscopic estimate; see also \cite{Deng2021b}. 
Recall the projection $\PP$ in \eqref{10}. By multiplying the equation \eqref{7} with $\mu^{1/2}, v_j\mu^{1/2}(j=1,2,3)$ and $\frac{1}{6}(|v|^2-3)\mu^{1/2}$ and then integrating them over $\R^3_v$, we have 
\begin{equation}\label{17}\left\{\begin{aligned}
	&\partial_ta_\pm + \nabla\cdot b + \nabla_x\cdot(v\mu^{1/2},(\II-\PP)f)_{L^2_v} =0,\\
	&\partial_t\big(b_j+(v_j\mu^{1/2},(\II-\PP)f)_{L^2_v}\big)+\partial_j(a_\pm+2c)\mp E_j\\&\qquad+(v_j\mu^{1/2},v\cdot\nabla_x(\II-\PP)f)_{L^2_v} = (L_\pm f+g_\pm,v_j\mu^{1/2})_{L^2_v},\\
	&\partial_t\Big(c+\frac{1}{6}((|v|^2-3)\mu^{1/2},(\II-\PP)f)_{L^2_v}\Big)+\frac{1}{3}\nabla_x\cdot b\\&\qquad + \frac{1}{6}((|v|^2-3)\mu^{1/2},v\cdot\nabla(\II-\PP)f)_{L^2_v} = \frac{1}{6}(L_\pm f+g_\pm,(|v|^2-3)\mu^{1/2})_{L^2_v},
\end{aligned}\right.
\end{equation}
where for brevity, we denote $I=(I_+,I_-)$ with $I_\pm f=f_\pm$ and 
\begin{align*}
		g_\pm = \pm\nabla_x\phi\cdot\nabla_vf_\pm\mp\frac{1}{2}\nabla_x\phi\cdot vf_\pm+\Gamma_\pm(f,f).  
\end{align*}
Notice that $(\P_\pm f,v\mu^{1/2})_{L^2_v}$ and $(\P_\pm f,(|v|^2-3)\mu^{1/2})_{L^2_v}$ is not $0$ in general and similar for $\Gamma_\pm$. Also, we have used
\begin{align*}
	(\pm\nabla_x\phi\cdot\nabla_vf_\pm\mp\frac{1}{2}\nabla_x\phi\cdot vf_\pm,\mu^{1/2})_{L^2_v}=0,
\end{align*}which is obtained by integration by parts on $\na_v$. 
In order to obtain the high-order moments, as in \cite{Duan2011}, we define for $1\le j,k\le 3$ that 
\begin{align*}
	\Theta_{jk}(f_\pm) = ((v_jv_k-1)\mu^{1/2},f_\pm)_{L^2_v},\ \ \Lambda_j(f_\pm) =\frac{1}{10}((|v|^2-5)v_j\mu^{1/2},f_\pm)_{L^2_v}. 
\end{align*}
Then multiplying equation \eqref{7} with the high-order moments $(v_jv_k-1)\mu^{1/2}$ and $\frac{1}{10}(|v|^2-5)v_j\mu^{1/2}$ and integrating over $\R^3_v$, we have 
\begin{equation}\label{18}\left\{
	\begin{aligned}
		&\partial_t\big(\Theta_{jj}((\II-\PP)f)+2c\big) + 2\partial_jb_j = \Theta_{jj}(g_\pm+h_\pm),\\
		&\partial_t\Theta_{jk}((\II-\PP)f)+\partial_jb_k+\partial_kb_j + \nabla_x\cdot(v\mu^{1/2},(\II-\PP)f)_{L^2_v}\\&\qquad\qquad\qquad\qquad = \Theta_{jk}(g_\pm+h_\pm)+(\mu^{1/2},g_\pm)_{L^2_v},\ j\neq k,\\
		&\partial_t\Lambda_j((\II-\PP)f)+\partial_jc = \Lambda_j(g_\pm+h_\pm),
	\end{aligned}\right.
\end{equation}
where 
\begin{align*}
	h_\pm = -v\cdot\nabla_x(\II-\PP)f+L_\pm f. 
\end{align*}
By taking the mean value of every two equations with sign $\pm$ in \eqref{17}, we have 
\begin{equation*}\left\{
	\begin{aligned}
		&\partial_t\Big(\frac{a_++a_-}{2}\Big)+\nabla_x\cdot b = 0,\\
		&\partial_tb_j+\partial_j\Big(\Big(\frac{a_++a_-}{2}\Big)+2c\Big)+\frac{1}{2}\sum_{k=1}^3\partial_k\Theta_{jk}((\I-\P)f\cdot[1,1])
		= \frac{1}{2}(g_++g_-,v_j\mu^{1/2})_{L^2_v},\\
		&\partial_tc+\frac{1}{3}\nabla_x\cdot b + \frac{5}{6}\sum^3_{j=1}\partial_j\Lambda_j((\I-\P)f\cdot[1,1]) = \frac{1}{12}(g_++g_-,(|v|^2-3)\mu^{1/2})_{L^2_v},
	\end{aligned}\right.
\end{equation*}for $1\le j\le3$. Similarly, taking the mean value with $\pm$ of the equation in \eqref{18}, we have 
\begin{equation*}
	\left\{\begin{aligned}
		&\partial_t\Big(\frac{1}{2}\Theta_{jk}((\II-\PP)f\cdot[1,1])+2c\delta_{jk}\Big) + \partial_jb_k+\partial_kb_j = \frac{1}{2}\Theta_{jk}(g_++g_-+h_++h_-),\\
		&\frac{1}{2}\partial_t\Lambda_j((\II-\PP)f\cdot[1,1])+\partial_jc = \frac{1}{2}\Lambda_j(g_++g_-+h_++h_-),
	\end{aligned}\right.
\end{equation*}
for $1\le j,k\le 3$. $\delta_{jk}$ is the Kronecker delta. Moreover, for obtaining the dissipation of the electric field $E$, we take the difference with sign $\pm$ in the first two equations in \eqref{17}, we have 
\begin{equation}\label{21}\left\{
	\begin{aligned}
		&\partial_t(a_+-a_-)+\nabla_x\cdot G=0,\\
		&\partial_tG + \nabla_x(a_+-a_-)-2E+\nabla_x\cdot\Theta((\I-\P)f\cdot[1,-1])\\&\qquad=((g+Lf)\cdot[1,-1],v\mu^{1/2})_{L^2_v},
	\end{aligned}\right.
\end{equation}
where 
\begin{align*}
	G = (v\mu^{1/2},(\I-\P)f\cdot[1,-1])_{L^2_v}.
\end{align*}
Recall that $E=-\nabla_x\phi$. Then by equation \eqref{8}, we have 
\begin{align}\label{16}
	\nabla_x\cdot E = a_+-a_-. 
\end{align}

\section{Regularity}\label{sec5}
In this section, we will prove the smoothing effect of solutions to Vlasov-Poisson-Boltzmann system with lower order initial data. 
Let $K\ge 4$ and $l\ge 0$. The Vlasov-Poisson-Boltzmann system reads  
\begin{equation}\label{16?}
\left\{\begin{aligned}
	&\partial_tf_\pm + v_i\partial^{e_i}f_\pm \pm \frac{1}{2}\partial^{e_i}\phi v_if_\pm  \mp\partial^{e_i}\phi\partial_{e_i}f_\pm \pm \partial^{e_i}\phi  v_i\mu^{1/2} - L_\pm f = \Gamma_{\pm}(f,f),\\
	&-\Delta_x \phi = \int_{\Rd}(f_+-f_-)\mu^{1/2}\,dv, \quad \phi\to 0\text{ as }|x|\to\infty,\\
	&f_\pm|_{t=0} = f_{0,\pm}. 
\end{aligned}\right.
\end{equation}The index appearing in both superscript and subscript means the summation. Our goal is to obtain the $a$ $priori$ estimate from these equations. 
In order to extract the smoothing estimate, we let $N=N(\alpha,\beta)>0$ be a large number chosen later. Assume $T\in(0,1]$, $t\in[0,T]$ and 
\begin{equation}\label{93}
	\psi=t^{N},\quad\psi_k=\left\{\begin{aligned}
		1, \text{  if $k\le 0$},\\
		\psi^k, \text{ if $k>0$}. 
	\end{aligned}\right.
\end{equation}
is this section. Then $|\partial_t\psi_k|\lesssim \psi_{k-1/N}$. Let $f$ be the smooth solution to \eqref{7}-\eqref{9} over $0\le t\le T$ and assume the $a$ $priori$ assumption 
\begin{align}\label{priori1}
	\sup_{0\le t\le T}\E_{K,l}(t)\le \delta_0,
\end{align}where $\delta_0\in(0,1)$ is a suitably small constant.
Under this assumption, we can derive a simple fact that 
\begin{align*}
	\|\phi\|_{L^\infty}\lesssim\|\phi\|_{H^2_x}\le \delta_0, \quad \|e^{\pm\phi}\|_{L^\infty}\approx 1.
\end{align*}
Also, by equation \eqref{21}$_1$ and Gagliardo–Nirenberg interpolation inequality (cf. \cite[Theorem 12.83]{Leoni2017}), we have 
\begin{equation}\label{34a}
	\partial_t\phi = -\Delta_x^{-1}\partial_t(a_+-a_-)=\Delta_x^{-1}\nabla_x\cdot G,
\end{equation}
\begin{equation}\label{34}
	\|\partial_t\phi\|_{L^\infty}\lesssim  \|\nabla_x\partial_t\phi\|^{1/2}_{L^2_x}\|\nabla^2_x\partial_t\phi\|^{1/2}_{L^2_x}\lesssim \|\nabla_x G\|_{H^1_x}\lesssim \|(\I-\P)f\|_{L^2_vH^1_x} \lesssim (\E_{K,l})^{1/2}(t). 
\end{equation}

\begin{Thm}\label{lem51}Assume $-3<\gamma\le-2s$, $0<s<1$, $K\ge 4$, $l\ge 0$.
	Let $f$ be the solution to \eqref{7}-\eqref{9} satisfying that
	\begin{align*}
		\epsilon^2_1 = \E_{4,l}(0), \quad
		\sup_{0\le t\le T}\|\<v\>^{C_{K,l}}f(t)\|^2_{L^2_{v,x}}<\infty, 
		\end{align*}for some large constant $C_{K,l}>0$ depending on $K,l$. 
	Then there exists $t_0\in(0,1)$ such that  
	\begin{align*}
		\sup_{0\le t\le t_0}\E_{K,l}(t)\le C_{K,l} \epsilon^2_1.
	\end{align*}
\end{Thm}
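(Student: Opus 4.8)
The plan is to close a time-weighted energy estimate for the system \eqref{16?}. For each pair $|\alpha|+|\beta|\le K$ I would apply $\partial^\alpha_\beta$ to the equation for $f_\pm$, pair it in $L^2_{v,x}$ with $\psi_{2|\alpha|+2|\beta|-8}\,e^{\pm\phi}w_l^2(\al,\beta)\partial^\alpha_\beta f_\pm$, sum over $\pm$ and over all $|\alpha|+|\beta|\le K$, and combine the resulting identity with the macroscopic estimates produced from \eqref{17}--\eqref{21} for $\P f$ and for $E=-\na_x\phi$, in the spirit of the existence theory (Theorem \ref{thm21} and \cite{Duan2013}) but now carrying the extra time weights $\psi_{|\alpha|-4}$. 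With $\E_{K,l}$ built as in \eqref{EE} and $\D_{K,l}$ as in \eqref{Defd}, the target is a differential inequality
\begin{align*}
	\frac{d}{dt}\E_{K,l}(t)+\lambda\,\D_{K,l}(t)\le C\big(\E_{K,l}^{1/2}(t)+\delta\big)\D_{K,l}(t)+C\,\E_{K,l}(t)\D_{K,l}^{1/2}(t)+C_\delta\,\|\<v\>^{C_{K,l}}f(t)\|^2_{L^2_{v,x}}.
\end{align*}
The decisive feature of the weight $\psi=t^N$ is that for $|\alpha|+|\beta|>4$ the factor $\psi_{|\alpha|+|\beta|-4}$ vanishes at $t=0$; hence $\E_{K,l}(0)$ reduces to its $|\alpha|+|\beta|\le4$ part, so $\E_{K,l}(0)\lesssim\E_{4,l}(0)=\epsilon_1^2$. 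Integrating the inequality and then choosing $t_0$ small will give the conclusion.

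For the routine terms I would argue as follows. The transport term $v\cdot\na_x\partial^\alpha_\beta f_\pm$ is integrated by parts in $x$: the symmetric part vanishes and the part falling on $e^{\pm\phi}$ produces a factor $\na_x\phi$ absorbed with the drift terms. The time derivative of $e^{\pm\phi}$ yields $\pm\partial_t\phi|\partial^\alpha_\beta f_\pm|^2$, controlled via $\|\partial_t\phi\|_{L^\infty_x}\lesssim\E_{K,l}^{1/2}$ from \eqref{34}. The drift terms $\pm\tfrac12\na_x\phi\cdot v f_\pm$ and $\mp\na_x\phi\cdot\na_v f_\pm$, together with their Leibniz commutators with $\partial^\alpha_\beta$, are precisely what Lemma \ref{Lem26} and Lemma \ref{Lem27} estimate, contributing $\E_{K,l}^{1/2}\D_{K,l}$; the source $\pm\na_x\phi\cdot v\mu^{1/2}$ has Gaussian decay and is harmless after decomposing $\partial^\alpha_\beta f_\pm=\partial^\alpha_\beta\PP f+\partial^\alpha_\beta(\II-\PP)f$. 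The linear term is handled by Lemma \ref{lemmaL}(iii), which returns $\|w_l(\al,\beta)\partial^\alpha_\beta(\I-\P)f\|^2_{L^2_xL^2_D}$ up to an $\eta$-small multiple of lower-order dissipation and a compact remainder $C_\eta\|\partial^\alpha f\|^2_{L^2(B_{C_\eta})}$ absorbed by the macroscopic part of $\D_{K,l}$. The nonlinear term is controlled by \eqref{25} (and \eqref{210} for the hydrodynamic projection) of Lemma \ref{lemmag}, contributing $\E_{K,l}^{1/2}\D_{K,l}+\E_{K,l}\D_{K,l}^{1/2}$, and the macroscopic estimates on $a_\pm,b,c,E$ run as in \cite{Duan2013}, giving the remaining dissipation up to $\E_{K,l}^{1/2}\D_{K,l}$.

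The genuinely new term, and the one I expect to be the main obstacle, is \eqref{22}, the contribution of $\partial_t(\psi_{|\alpha|+|\beta|-4})$ for $|\alpha|+|\beta|>4$. Since $|\partial_t\psi_k|\lesssim\psi_{k-1/N}$, this term carries a time weight that is weaker near $t=0$ than those in $\D_{K,l}$ by the fixed factor $\psi^{1/N}$, so it cannot be absorbed into $\D_{K,l}$ directly. The plan, following the scheme sketched in the introduction, is to pass to the Fourier side in $x$ and interpolate the scalar time weight: choosing $N=N(\alpha,\beta)$ via \eqref{106a} one gets
\begin{align*}
	\psi_{|\alpha|+|\beta|-4-\frac{1}{2N}}\lesssim\delta\,\tilde{b}^{1/2}(v,y)+C_{0,\delta}\<v\>^{-l_0(|\alpha|+|\beta|)/\delta_1}|y|^{-(|\alpha|+|\beta|)},
\end{align*}
with $\tilde{b}(v,y)=\<v\>^{l_0}|y|^{\delta_1}$ from \eqref{b}. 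The second term, having exactly the right negative power of $|y|$, removes all $x$-derivatives and leaves a purely algebraic-in-$v$ quantity $C_\delta\|\<v\>^{C_{K,l}}f\|^2_{L^2_{v,x}}$, finite by hypothesis. A parallel interpolation in $v$ against $\tilde{a}^{1/2}$ (using \eqref{11a} together with Lemma \ref{bound_varepsilon}) yields the small multiple $\delta^2\D_{K,l}$. The remaining genuinely high-order piece
\begin{align*}
	\delta^2\big\|\psi_{|\alpha|+|\beta|-4}\,\tilde{b}^{1/2}w_l(\al,\beta)(\partial^\alpha_\beta f)^\wedge(v,y)\big\|^2_{L^2_{v,y}}
\end{align*}
is then absorbed by introducing the modified symbol $\theta$ of \eqref{107} and testing the $x$-Fourier-transformed equation \eqref{16?} against it: the commutator of $\theta^w$ with the transport operator has principal symbol $\{v\cdot y,\theta\}$, which supplies precisely the missing dissipation in the spatial frequency $y$, provided $\delta_1$ (hence $l_0$ and $N(\alpha,\beta)$) is chosen in the compatible range. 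Making the spatial-frequency gain from this Poisson bracket and the velocity gain from $\tilde{a}$ simultaneously large enough is, I expect, the delicate bookkeeping at the heart of the argument.

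Finally, I would close the estimate using the a priori smallness $\sup_{[0,T]}\E_{K,l}\le\delta_0$ from \eqref{priori1}: for $\delta$ and $\delta_0$ small the terms $C(\E_{K,l}^{1/2}+\delta)\D_{K,l}+C\E_{K,l}\D_{K,l}^{1/2}$ are absorbed into $\tfrac{\lambda}{2}\D_{K,l}+C\delta_0\E_{K,l}$, leaving $\frac{d}{dt}\E_{K,l}(t)\le C\delta_0\E_{K,l}(t)+C_\delta\|\<v\>^{C_{K,l}}f(t)\|^2_{L^2_{v,x}}$. Gronwall on $[0,t]$ with $\E_{K,l}(0)\lesssim\epsilon_1^2$ gives $\E_{K,l}(t)\le e^{C\delta_0 t}\big(C\epsilon_1^2+C_\delta t\sup_{[0,T]}\|\<v\>^{C_{K,l}}f\|^2_{L^2_{v,x}}\big)$ for $t\le1$. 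Since $C_{K,l}$ may be chosen large, picking $t_0\in(0,1)$ small enough that $C_\delta t_0\sup_{[0,T]}\|\<v\>^{C_{K,l}}f\|^2_{L^2_{v,x}}\le\epsilon_1^2$ and $Ce^{C\delta_0}\le C_{K,l}/2$ yields $\sup_{0\le t\le t_0}\E_{K,l}(t)\le C_{K,l}\epsilon_1^2$; a standard continuity argument propagates the a priori bound on $[0,t_0]$. The structural reason the hypothesis $\sup_{[0,T]}\|\<v\>^{C_{K,l}}f\|_{L^2_{v,x}}<\infty$ is needed for soft potentials is exactly the algebraic $v$-growth left over by the interpolation of \eqref{22}.
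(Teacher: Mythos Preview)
Your proposal is correct and follows essentially the same route as the paper: a time-weighted energy identity for \eqref{16?} with test function $\psi_{2|\alpha|+2|\beta|-8}e^{\pm\phi}w_l^2(\al,\beta)\partial^\alpha_\beta f_\pm$, the standard lemmas for the drift, linear and bilinear terms, and the $\tilde{b}/\theta$ commutator trick to handle the $\partial_t\psi$ contribution \eqref{22}, closed by Gronwall on a short interval.

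Two small points where your write-up deviates from the paper's bookkeeping. First, the paper does \emph{not} invoke the macroscopic system \eqref{17}--\eqref{21} here; since the goal is only local in time, it simply allows a bare $C\,\E_{K,l}$ on the right of the differential inequality (cf.~\eqref{72}), which is harmless for Gronwall. Your plan to recover the $\P f$ and $E$ parts of $\D_{K,l}$ via the macroscopic equations would also work but is more effort than needed. Second, in your interpolation the power of $|y|$ should be $-|\alpha|$, not $-(|\alpha|+|\beta|)$: the frequency $y$ is dual to $x$, so only the $\alpha$-derivatives are removed this way; the $\beta$-derivatives are then killed by the separate $\tilde{a}^{1/2}$ interpolation you mention (the paper splits $\psi_{|\alpha|+|\beta|-4-\frac{1}{2N}}$ into an $\alpha$-factor handled by $\tilde{b}$ and a $\beta$-factor handled by $\tilde{a}$). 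Finally, the Poisson bracket $\{v\cdot y,\theta\}=\tilde{b}+R$ does not directly produce extra dissipation; rather, testing the equation against $\theta^w$ converts the $\tilde{b}^{1/2}$-term into a total time derivative (the first line of \eqref{112}), which is absorbed into a slightly modified energy before integrating. Your description ``supplies the missing dissipation'' is morally right but mechanically it is an energy correction, exactly as you use it in the final ODE step.
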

	The reason of choosing $\psi_{|\alpha|+|\beta|-4}$ in $\eqref{Defe}$ is that whenever $K\ge 4$, the initial value $\E_{K,l}(0)=\E_{4,l}(0)$, since $\psi_{|\alpha|+|\beta|-4}|_{t=0}=0$ whenever $|\alpha|+|\beta|\ge 5$. In order to prove Theorem \ref{lem51}, we give the following {\it a priori} estimates. 

\begin{Lem}\label{thm41}For any $l\ge 0$, there is $\E_{K,l}$ satisfying \eqref{Defe} such that for $0\le t\le T$,
\begin{align}\label{72}
	\partial_t\E_{K,l}(t)+\lambda\D_{K,l}(t) \lesssim \|\partial_t\phi\|_{L^\infty_x}\E_{K,l}(t)+\E_{K,l} + \sum_{|\alpha|+|\beta|\le K}\|\psi_{|\alpha|+|\beta|-4-\frac{1}{2N}}w_l(\al,\beta)\partial^\alpha_\beta f\|^2_{L^2_{v,x}}.
\end{align}where $D_{K,l}$ is defined by \eqref{Defd}. 
\end{Lem}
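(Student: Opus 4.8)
The plan is to run the time-weighted energy method for the VPB system in the now-standard two-step form: a microscopic estimate for $\partial^\alpha_\beta(\I-\P)f$ and $\partial^\alpha\P f$, and a macroscopic estimate recovering the dissipation of the fluid moments $a_\pm,b,c$ and of the field $E$, then couple them with a small constant. First I would fix the functional: take $\E_{K,l}(t)$ to be the right-hand side of \eqref{Defe} with each squared norm replaced by its $e^{\pm\phi}$-weighted version (Guo's weight, $|e^{\pm\phi}|\approx1$ by \eqref{priori1}, so this preserves \eqref{Defe}), plus $\kappa\,\E^{\mathrm{int}}_{K,l}(t)$, where $\E^{\mathrm{int}}_{K,l}$ is the usual interaction functional built from the local balance laws \eqref{17}, the high-order moment identities \eqref{18}, the field system \eqref{21} and $\nabla_x\cdot E=a_+-a_-$ from \eqref{16}, and $\kappa>0$ is chosen small so that $|\kappa\E^{\mathrm{int}}_{K,l}|$ is dominated by the quadratic part; this realizes the ``there is $\E_{K,l}$ satisfying \eqref{Defe}'' clause.

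Second, the microscopic estimate. Apply $\partial^\alpha_\beta$ to \eqref{7}, pair with $\psi_{2|\alpha|+2|\beta|-8}e^{\pm\phi}w^2_l(\al,\beta)\partial^\alpha_\beta f_\pm$ (resp. $\psi_{2|\alpha|-8}e^{\pm\phi}\partial^\alpha f_\pm$ when $\beta=0$), sum over $\pm$, integrate over $\R^3_x\times\R^3_v$, and sum over $|\alpha|+|\beta|\le K$. The $\partial_t$ term gives $\tfrac12\partial_t\|\cdot\|^2$ together with $\mp\tfrac12(\partial_t\phi\,e^{\pm\phi}\cdots,\cdots)$, which is $\lesssim\|\partial_t\phi\|_{L^\infty_x}\E_{K,l}$, and the $\partial_t\psi$ piece, which by $|\partial_t\psi_k|\lesssim\psi_{k-1/N}$ produces exactly $\sum\|\psi_{|\alpha|+|\beta|-4-\frac1{2N}}w_l(\al,\beta)\partial^\alpha_\beta f\|^2_{L^2_{v,x}}$ (the last term of \eqref{72}, to be handled in the next lemma by the $\tilde b$-interpolation). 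The weight $e^{\pm\phi}$ is chosen so that, after integration by parts, the transport term $v_i\partial^{e_i}$ cancels the top-order piece of the growth term $\pm\tfrac12\partial^{e_i}\phi\,v_if_\pm$; the remaining Leibniz pieces of the growth term and the $v$-transport commutator $(\partial_{\beta'}v_i)\partial^{\alpha+e_i}_{\beta-\beta'}f_\pm$ are controlled by Lemma \ref{Lem26}, the term $\mp\partial^{e_i}\phi\,\partial_{e_i}f_\pm$ by Lemma \ref{Lem27} (this is where the anisotropic interpolation \eqref{fff} replaces the naive $\nabla_v$ bound and removes the restriction $s\ge1/2$), the collision gain $\Gamma_\pm(f,f)$ by Lemma \ref{lemmag}, giving $\E^{1/2}_{K,l}\D_{K,l}+\E_{K,l}\D^{1/2}_{K,l}\le\delta_0^{1/2}\D_{K,l}+C\E_{K,l}$, and $-L_\pm f$ by Lemma \ref{lemmaL}(iii), which yields $\|w_l(\al,\beta)\partial^\alpha_\beta f\|^2_{L^2_D}$ up to a small multiple of lower-$|\beta|$ dissipation (absorbed by summing upward in $|\beta|$) and a remainder $\|\partial^\alpha f\|^2_{L^2(B_C)}\lesssim\|\partial^\alpha\P f\|^2+\|\partial^\alpha(\I-\P)f\|^2_{L^2_D}$. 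The inhomogeneous term $\pm\partial^{\alpha+e_i}\phi\,\partial_\beta(v_i\mu^{1/2})$ pairs against the Gaussian-weighted $\partial^\alpha_\beta f_\pm$ and is bounded by $\eta\D_{K,l}+C_\eta\|\psi_{|\alpha|-4}\partial^\alpha E\|^2_{L^2_x}\le\eta\D_{K,l}+C_\eta\E_{K,l}$; together with the $\Gamma$-contribution above this is the origin of the bare $\E_{K,l}$ on the right of \eqref{72}.

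Third, the macroscopic estimate. Differentiating $\E^{\mathrm{int}}_{K,l}$ in time and using \eqref{17}, \eqref{18}, \eqref{21}, \eqref{16}, the leading terms produce $\lambda\big(\sum_{1\le|\alpha|\le K}\|\psi_{|\alpha|-4}\partial^\alpha\P f\|^2+\sum_{|\alpha|\le K-1}\|\psi_{|\alpha|-4}\partial^\alpha E\|^2\big)$, while each of the other terms is either $\lesssim\sum\|\psi_{|\alpha|+|\beta|-4}(\tilde a^{1/2})^ww_l(\al,\beta)\partial^\alpha_\beta(\I-\P)f\|^2$, hence absorbable into the $\D_{K,l}$ gained in Step~2 once $\kappa$ is small, or a nonlinear contribution from $g_\pm=\pm\nabla_x\phi\cdot\nabla_vf_\pm\mp\tfrac12\nabla_x\phi\cdot vf_\pm+\Gamma_\pm(f,f)$ bounded via Corollary \ref{Coro1} and Lemma \ref{lemmag} by $\delta_0^{1/2}\D_{K,l}+C\E_{K,l}$. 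Throughout, Lemma \ref{Lem27a} and the submultiplicativity $\psi_{|\alpha|+|\beta|-4}\le\psi_{|\alpha_1|+|\beta_1|-4}\,\psi_{|\alpha_2|+|\beta_2|-4}$ under Leibniz splitting are used, and the shift by $4$ in $\psi$ is precisely what absorbs the two $x$-derivatives spent in $H^2_x\hookrightarrow L^\infty_x$. Finally, adding $\kappa\times$(macroscopic estimate) to the microscopic one and choosing $\eta,\kappa,\delta_0$ small, all $\eta\D_{K,l}$ and $\delta_0^{1/2}\D_{K,l}$ remainders are absorbed, the $\|(\I-\P)f\|^2$-type right-hand terms of the macroscopic estimate are absorbed by the micro-dissipation, and one recovers $\lambda\D_{K,l}$ with $\D_{K,l}$ as in \eqref{Defd}, which is \eqref{72}.

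I expect the main obstacle to be the microscopic estimate in the genuinely soft range $0<s<1$: since $\|\cdot\|_{L^2_D}$ only controls $\<v\>^{\gamma/2+s}$-weighted derivatives, the velocity weight $w_l(\al,\beta)$ in \eqref{w}, with the precise exponents $p,q$, must be calibrated so that every $v$- or $x$-derivative spent lowers the weight by exactly enough to be reabsorbed into $\D_{K,l}$; and the term $\mp\partial^{e_i}\phi\,\partial_{e_i}f_\pm$ must be treated by the pseudodifferential interpolation of Lemma \ref{Lem27} (via \eqref{fff}) rather than by a direct $\nabla_v$ bound. Keeping the $\psi$-bookkeeping consistent so that the only surviving uncontrolled term is the stated $\psi_{|\alpha|+|\beta|-4-\frac1{2N}}$ one is the other delicate point.
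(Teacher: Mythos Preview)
Your proposal is correct and would yield \eqref{72}, but it is more elaborate than the paper's argument in one essential respect: you introduce a macroscopic interaction functional $\E^{\mathrm{int}}_{K,l}$ to recover the dissipation of $\partial^\alpha\P f$ and $\partial^\alpha E$. The paper does not do this. It only proves the microscopic estimates (your Step~2, split into an unweighted $\beta=0$ step and a weighted mixed-derivative step), which yield on the left only the $(\I-\P)f$ part of $\D_{K,l}$. Then it observes that the remaining pieces $\sum_{|\alpha|\le K-1}\|\psi_{|\alpha|-4}\partial^\alpha E\|^2_{L^2_x}$ and $\sum_{1\le|\alpha|\le K}\|\psi_{|\alpha|-4}\partial^\alpha\P f\|^2_{L^2_{v,x}}$ of $\D_{K,l}$ are trivially $\lesssim\E_{K,l}$, and since $\E_{K,l}$ already appears on the right of \eqref{72}, one may simply add these to both sides. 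This shortcut is available precisely because \eqref{72} is a local-in-time estimate destined for Gr\"onwall, not a closed global energy inequality; for the latter one would indeed need your interaction-functional step.

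A second, related difference: you bound the inhomogeneous term $\pm\partial^{\alpha+e_i}\phi\,v_i\mu^{1/2}$ by $\eta\D_{K,l}+C_\eta\E_{K,l}$ and separately put $\|\partial^\alpha\nabla_x\phi\|^2_{L^2_x}$ into the definition of $\E_{K,l}$ (then handling its time derivative via the continuity equation). The paper instead, in the unweighted $\beta=0$ step, splits $e^{\pm\phi}=1+(e^{\pm\phi}-1)$ and uses the $1$-part together with $\partial_t(a_+-a_-)+\nabla_x\cdot G=0$ and \eqref{16} to obtain directly $\sum_\pm\pm(\partial^{\alpha+e_i}\phi\,v_i\mu^{1/2},\psi_{2|\alpha|-8}\partial^\alpha f_\pm)_{L^2_{v,x}}=\tfrac12\partial_t\|\psi_{|\alpha|-4}\partial^\alpha\nabla_x\phi\|^2_{L^2_x}$, so the field energy enters $\E_{K,l}$ naturally rather than by fiat. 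The two routes are equivalent by the equations, but the paper's is cleaner here. Your microscopic analysis (handling of $\partial_t\psi$, Guo's $e^{\pm\phi}$ cancellation, Lemmas \ref{Lem26}--\ref{Lem27}, \ref{lemmag}, \ref{lemmaL}) matches the paper.
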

\begin{proof}
	For any $K\ge 4$ being the total derivative of $v,x$, we let $|\alpha|+|\beta|\le K$.
	On one hand, we apply $\partial^\alpha$ to equation \eqref{16?} to get 
	\begin{equation}\begin{aligned}\label{35}
		&\quad\,\partial_t\partial^{\alpha} f_\pm + v_i\partial^{e_i+\alpha} f_\pm \pm \frac{1}{2}\sum_{\substack{\alpha_1\le\alpha}}{}\partial^{e_i+\alpha_1}\phi v_i\partial^{\alpha-\alpha_1}f_\pm \\ &\qquad\mp\sum_{\substack{\alpha_1\le\alpha}}{}\partial^{e_i+\alpha_1}\phi\partial^{\alpha-\alpha_1}_{e_i} f_\pm \pm \partial^{e_i+\alpha}\phi v_i\mu^{1/2} - \partial^{\alpha} L_\pm  f =
		\partial^{\alpha} \Gamma_{\pm}(f,f).\end{aligned}
\end{equation}
On the other hand, we apply $\partial^{\alpha}_\beta$ to equation \eqref{16?}. Then,  
	\begin{align}\label{36}
		&\notag\quad\,\partial_t\partial^{\alpha}_\beta f_\pm + \sum_{\beta_1\le \beta}{}\partial_{\beta_1}v_i\partial^{e_i+\alpha}_{\beta-\beta_1}f_\pm \pm \frac{1}{2}\sum_{\substack{\alpha_1\le\alpha}}\sum_{\beta_1\le\beta}{}\partial^{e_i+\alpha_1}\phi \partial_{\beta_1}v_i\partial^{\alpha-\alpha_1}_{\beta-\beta_1}f_\pm \\ &\qquad\mp\sum_{\substack{\alpha_1\le\alpha}}{}\partial^{e_i+\alpha_1}\phi\partial^{\alpha-\alpha_1}_{\beta+e_i}f_\pm \pm \partial^{e_i+\alpha}\phi \partial_\beta(v_i\mu^{1/2}) - \partial^{\alpha}_\beta L_\pm f =
		\partial^{\alpha}_\beta \Gamma_{\pm}(f,f).
	\end{align}	

{\bf Step 1. Estimate without weight.}
For the estimate without weight, we take the case $|\alpha|\le K$ and $\beta=0$. This case is for obtaining the term $\|\partial^\alpha\nabla_x\phi\|^2_{L^2_x}$ on the left hand side of the energy inequality. Taking inner product of equation \eqref{35} with $\psi_{2|\alpha|-8}e^{\pm\phi}\partial^{\alpha} f_\pm$ over $\R^3_v\times\R^3_x$, we have   
\begin{align*}
	&\notag\quad\,\Big(\partial_t\partial^{\alpha} f_\pm,\psi_{2|\alpha|-8}e^{\pm\phi}\partial^{\alpha} f_\pm\Big)_{L^2_{v,x}}
	+ \Big(v_i\partial^{e_i+\alpha}f_\pm,\psi_{2|\alpha|-8}e^{\pm\phi}\partial^{\alpha} f_\pm\Big)_{L^2_{v,x}}\\ 
	&\notag\pm \Big(\frac{1}{2}\sum_{\substack{\alpha_1\le\alpha}}{}\partial^{e_i+\alpha_1}\phi v_i\partial^{\alpha-\alpha_1}f_\pm,\psi_{2|\alpha|-8}e^{\pm\phi}\partial^{\alpha} f_\pm\Big)_{L^2_{v,x}} \\ 
	&\mp
	\Big(\sum_{\substack{\alpha_1\le\alpha}}{}\partial^{e_i+\alpha_1}\phi\partial^{\alpha-\alpha_1}_{e_i}f_\pm,\psi_{2|\alpha|-8}e^{\pm\phi}\partial^{\alpha} f_\pm\Big) _{L^2_{v,x}}\\
	&\notag\pm \Big(\partial^{e_i+\alpha}\phi v_i\mu^{1/2},\psi_{2|\alpha|-8}e^{\pm\phi}\partial^{\alpha} f_\pm\Big)_{L^2_{v,x}} 
	- \Big(\partial^{\alpha} L_\pm f,\psi_{2|\alpha|-8}e^{\pm\phi}\partial^{\alpha} f_\pm\Big)_{L^2_{v,x}}\\ 
	&\notag= \Big(\partial^{\alpha} \Gamma_{\pm}(f,f),\psi_{2|\alpha|-8}e^{\pm\phi}\partial^{\alpha} f_\pm\Big)_{L^2_{v,x}}.
\end{align*}
Now we take the summation on $\pm$ and real part, and denote these resulting terms by $I_1$ to $I_7$. In the following we estimate them term by term. 
For the term $I_1$, 
\begin{align}\label{37}
	I_1 &=\notag \frac{1}{2}\partial_t\sum_{\pm}\|e^{\frac{\pm\phi}{2}}\psi_{|\alpha|-4}\partial^{\alpha} f_\pm\|^2_{L^2_{v,x}} \mp \Re\sum_{\pm}\frac{1}{2}(\partial_t\phi e^{\pm\phi}\partial^{\alpha} f_\pm, \psi_{2|\alpha|-8}\partial^{\alpha} f_\pm)_{L^2_{v,x}}\\&\qquad-\Re\sum_{\pm}(\partial_t(\psi_{|\alpha|-4})\partial^{\alpha} f_\pm,\psi_{|\alpha|-4} e^{\pm\phi}\partial^{\alpha} f_\pm)_{L^2_{v,x}}. 
\end{align}
The second term on the right hand side of \eqref{37} is estimated as 
\begin{align*}
	\Big|\frac{1}{2}(\partial_t\phi \psi_{2|\alpha|-8}e^{\pm\phi}\partial^{\alpha} f_\pm, \partial^{\alpha} f_\pm)_{L^2_{v,x}}\Big|\lesssim \|\partial_t\phi\|_{L^\infty}\|\psi_{|\alpha|-4}\partial^\alpha f_\pm\|^2_{L^2_{v,x}}\lesssim \|\partial_t\phi\|_{L^\infty}\E_{K,l}(t)
\end{align*}
The third right-hand term of \eqref{37} is estimated as 
	\begin{align}
	|(\partial_t(\psi_{|\alpha|-4})\partial^{\alpha} f_\pm,\psi_{|\alpha|-4} e^{\pm\phi}\partial^{\alpha} f_\pm)_{L^2_{v,x}}|&\lesssim \|\psi_{|\alpha|-4-\frac{1}{2N}}\partial^\alpha f\|^2_{L^2_{v,x}}.\notag
\end{align}

For the second term $I_2$, we will combine it with $I_3$ and $\alpha_1=0$. It turns out that the sum is zero. This is what $e^{\pm\phi}$ designed for, cf. \cite{Guo2012}. Taking integration by parts on $x$, one has  
\begin{align}\label{48aa}
	&\quad\,\Big(v_i\partial^{e_i+\alpha}f_\pm,\psi_{2|\alpha|-8}e^{\pm\phi}\partial^{\alpha} f_\pm\Big)_{L^2_{v,x}}
	\pm \Big(\frac{1}{2}\partial^{e_i}\phi v_i\partial^{\alpha}f_\pm,\psi_{2|\alpha|-8}e^{\pm\phi}\partial^{\alpha} f_\pm\Big)_{L^2_{v,x}}=0.
\end{align}

For the left terms in $I_3$, the weight will be used. In this case, $|\alpha_1|\ge 1$ and by Lemma \ref{Lem26}, it's bounded above by $\E^{1/2}_{K,l}\D_{K,l}$. 
Using Lemma \ref{Lem27}, the term $I_4$ is also bounded above by $\E^{1/2}_{K,l}\D_{K,l}$.

For the term $I_5$, we will decompose $e^{\pm\phi}$ into $(e^{\pm\phi}-1)$ and $1$. Recall equation \eqref{16} and \eqref{21}. For the part of $1$, we have 
\begin{align*}
	\sum_{\pm}\pm\Re\big(\partial^{e_i+\alpha}\phi v_i\mu^{1/2},\psi_{2|\alpha|-8}\partial^{\alpha} f_\pm\big)_{L^2_{v,x}} 
    &=\notag -\Re\big(\partial^{\alpha}\phi,\psi_{2|\alpha|-8}\partial^{\alpha} \nabla_x\cdot G\big)_{L^2_{x}}\\
    &=\notag \Re\big(\partial^{\alpha}\phi,\psi_{2|\alpha|-8}\partial^{\alpha} \partial_t(a_+-a_-)\big)_{L^2_{x}}\\
    &= \frac{1}{2}\partial_t\|\psi_{|\alpha|-4}\partial^{\alpha}\nabla_x\phi\|_{L^2_x}^2.
\end{align*}
For the part of $(e^{\pm\phi}-1)$, notice that 
\begin{align*}
	|e^{\pm\phi}-1|\lesssim \|\phi\|_{L^\infty}\lesssim \|\nabla_x\phi\|_{H^1_x}.
\end{align*}Then, 
\begin{align*}
	&\quad\,\Big|\sum_{\pm}\pm\Re\Big(\partial^{e_i+\alpha}\phi v_i\mu^{1/2},(e^{\pm\phi}-1)\psi_{2|\alpha|-8}\partial^{\alpha} f_\pm\Big)_{L^2_{v,x}}\Big|\notag\\
	&\lesssim \|\nabla_x\phi\|_{H^1_x}\sum_{|\alpha|\le K}\|\partial^\alpha\nabla_x\phi\|_{L^2_{v,x}}\sum_{|\alpha|\le K}\|\partial^\alpha(\I-\P)f\|_{L^2_{v,x}}\\
	&\lesssim \E^{1/2}_{K,l}(t)\D_{K,l}(t).\notag
\end{align*}
For the term $I_6$, since $L_\pm$ commutes with $\partial^{\alpha}$ and $e^{\pm\phi}$, by Lemma \ref{lemmaL}, we have 
\begin{align*}
	I_6 = - \sum_{\pm}\Big(\partial^{\alpha} L_\pm f,\psi_{2|\alpha|-8}e^{\pm\phi}\partial^{\alpha} f_\pm\Big)_{L^2_{v,x}}\ge \lambda \sum_{\pm}\|\psi_{|\alpha|-4}\partial^{\alpha}(\II-\PP) f\|_{L^2_xL^2_D}^2. 
\end{align*}
For the term $I_7$, by Lemma \ref{lemmag}, we have 
\begin{align*}
	|I_7|&= \Big|\sum_{\pm}\Big(\partial^{\alpha} \Gamma_{\pm}(f,f),\psi_{2|\alpha|-8}e^{\pm\phi}\partial^{\alpha} f_\pm\Big)_{L^2_{v,x}}\Big|\lesssim\E^{1/2}_{K,l}(t)\D_{K,l}(t).
\end{align*}
Therefore, combining all the estimate above and take the summation on $|\alpha|\le K$, we conclude that, 
\begin{equation}\label{47}
	\begin{aligned}
		&\quad\,\frac{1}{2}\partial_t\sum_{\pm}\sum_{|\alpha|\le K}\Big(\|\psi_{|\alpha|-4}e^{\frac{\pm\phi}{2}}\partial^{\alpha} f_\pm\|_{L^2_{v,x}} +
		\|\psi_{|\alpha|-4}\partial^{\alpha}\nabla_x\phi\|_{L^2_x}^2\Big)\\&\qquad + \lambda \sum_{\pm}\sum_{|\alpha|\le K}\|\psi_{|\alpha|-4}\partial^{\alpha} (\II-\PP)f\|_{L^2_xL^2_D}^2\\
 &\lesssim \|\partial_t\phi\|_{L^\infty}\E_{K,l}(t)+\E^{1/2}_{K,l}(t)\D_{K,l}(t)+\sum_{|\alpha|\le K}\|\psi_{|\alpha|-4-\frac{1}{2N}}w_l(|\al|,0)\partial^\alpha f\|^2_{L^2_{v,x}}.
	\end{aligned}
\end{equation}

{\bf Step 2. Estimate with weight on the mixed derivatives.}
Let $K\ge 4$, $|\alpha|+|\beta|\le K$. Taking inner product of equation \eqref{36} with  $\psi_{2|\alpha|+2|\beta|-8}e^{\pm\phi}w^2_l(\al,\beta)\partial^{\alpha}_\beta f_\pm$ over $\R^3_v\times\R^3_x$, one has
\begin{align*}
	&\quad\,\Big(\partial_t\partial^{\alpha}_\beta  f,e^{\pm\phi}\psi_{2|\alpha|+2|\beta|-8}w^2_l(\al,\beta)\partial^{\alpha}_\beta  f\Big)_{L^2_{v,x}}\\
	&\qquad + \Big(\sum_{\beta_1\le \beta}{}\partial_{\beta_1}v_i\partial^{e_i+\alpha}_{\beta-\beta_1} f,e^{\pm\phi}\psi_{2|\alpha|+2|\beta|-8}w^2_l(\al,\beta)\partial^{\alpha}_\beta  f\Big)_{L^2_{v,x}} \\
	&\qquad\pm \Big(\frac{1}{2}\sum_{\substack{\alpha_1\le\alpha\\\beta_1\le\beta}}{}\partial^{e_i+\alpha_1}\phi \partial_{\beta_1}v_i\partial^{\alpha-\alpha_1}_{\beta-\beta_1} f,e^{\pm\phi}\psi_{2|\alpha|+2|\beta|-8}w^2_l(\al,\beta)\partial^{\alpha}_\beta  f\Big)_{L^2_{v,x}} \\ &\qquad\mp\Big(\sum_{\substack{\alpha_1\le\alpha}}{}\partial^{e_i+\alpha_1}\phi\partial^{\alpha-\alpha_1}_{\beta+e_i} f,e^{\pm\phi}\psi_{2|\alpha|+2|\beta|-8}w^2_l(\al,\beta)\partial^{\alpha}_\beta  f\Big)_{L^2_{v,x}}\\
	&\qquad \pm \Big(\partial^{e_i+\alpha}\phi \partial_\beta(v_i\mu^{1/2}),e^{\pm\phi}\psi_{2|\alpha|+2|\beta|-8}w^2_l(\al,\beta)\partial^{\alpha}_\beta  f\Big)_{L^2_{v,x}}\\
	&\qquad - \Big(\partial^{\alpha}_\beta L_\pm f,e^{\pm\phi}\psi_{2|\alpha|+2|\beta|-8}w^2_l(\al,\beta)\partial^{\alpha}_\beta  f\Big)_{L^2_{v,x}} \\
	&= \Big(\partial^{\alpha}_\beta \Gamma_{\pm}(f,f),e^{\pm\phi}\psi_{2|\alpha|+2|\beta|-8}w^2_l(\al,\beta)\partial^{\alpha}_\beta  f\Big)_{L^2_{v,x}}.
\end{align*}
Now we denote these terms with summation $\sum_{\pm}$ by $J_1$ to $J_{7}$ and estimate them term by term. 
The estimate of $J_1$ to $J_4$ are similar to $I_1$ to $I_4$. 
For $J_1$, we have 
\begin{align*}
	J_1 &\ge \partial_t\sum_{\pm}\|e^{\frac{\pm\phi}{2}}\psi_{|\alpha|+|\beta|-4}w_l(\al,\beta)\partial^{\alpha}_\beta f_\pm\|_{L^2_{v,x}} - C\|\partial_t\phi\|_{L^\infty}\E_{K,l}(t)\\&\qquad -\sum_{\pm}\|\psi_{|\alpha|+|\beta|-4-\frac{1}{2N}}w_l(\al,\beta)\partial^\alpha_\beta f_\pm\|^2_{L^2_{v,x}} ,
\end{align*}
Similar to \eqref{48aa}, $J_2$ and $J_3$ with $\alpha_1=0$ are canceled by using integration by parts. Using Lemma \ref{Lem26} and Lemma \ref{Lem27}, the left case $\alpha_1\neq 0$ in $J_3$ together with $J_4$ are bounded above by $\E^{1/2}_{K,l}(t)\D_{K,l}(t)$.
For the term $J_5$, we only need an upper bound: for any $\eta>0$, 
\begin{align*}\notag
	|J_5| &= 
	\Big|\sum_{\pm}\pm \Big(\partial^{e_i+\alpha}\phi \partial_\beta(v_i\mu^{1/2}),\psi_{2|\alpha|+2|\beta|-8}e^{\pm\phi}w^2_l(\al,\beta)\partial^{\alpha}_\beta f_\pm\Big)_{L^2_{v,x}}\Big|\\
	&\lesssim \eta\sum_\pm\|\psi_{|\alpha|+|\beta|-4}w_l(\al,\beta)\partial^{\alpha}_{\beta}f_\pm\|^2_{L^2_xL^2_D}+C_\eta\|\psi_{|\alpha|-4}\partial^\alpha\nabla_x\phi\|_{L^2_{v,x}}^2.
\end{align*}
Notice that $\|\psi_{|\alpha|-4}\partial^\alpha\nabla_x\phi\|_{L^2_{v,x}}^2$ is bounded above by $\E_{K,l}$. For the term $J_6$, since $L_\pm$ commutes with $e^{\pm\phi}$, by Lemma \ref{lemmaL}, we have 
\begin{align*}
	J_6 &= - \sum_{\pm}\Big(\partial^{\alpha}_\beta L_\pm f,\psi_{2|\alpha|+2|\beta|-8}e^{\pm\phi}w^2_l(\al,\beta)\partial^{\alpha}_\beta f_\pm\Big)_{L^2_{v,x}}\\&\ge \lambda\sum_{\pm} \|\psi_{|\alpha|+|\beta|-4}e^{\frac{\pm\phi}{2}}w_l(\al,\beta)\partial^{\alpha}_\beta f_\pm\|_{L^2_xL^2_D}^2-C_\eta\sum_{\pm}\|\partial^\alpha f_\pm\|^2_{L^2_xL^2_D}\\
	&\qquad -\eta\sum_{\pm}\sum_{|\beta_1|\le|\beta|}\|\psi_{|\alpha|+|\beta|-4}e^{\frac{\pm\phi}{2}}w_l(\al,\beta_1)\partial^{\alpha}_{\beta_1} f_\pm\|^2_{L^2_xL^2_D},
\end{align*}for any $\eta>0$. 
Here we use the fact that $\|w_l(\al,\beta)(\cdot)\|_{L^2(B_{C_\eta})}\lesssim \|\cdot\|_{L^2_D}$. 
The term $J_{7}$, by Lemma \ref{lemmag}, is bounded above by  $\E^{1/2}_{K,l}\D_{K,l}+\E_{K,l}\D^{1/2}_{K,l}\lesssim (\E^{1/2}_{K,l}+\E_{K,l})\D_{K,l}+\E_{K,l}$.

Combining all the above estimate, taking summation on $|\alpha|+|\beta|\le K$ and letting $\eta$ sufficiently small, we have 
\begin{multline}\label{111ab}
	\frac{1}{2}\partial_t\sum_{|\alpha|+|\beta|\le K,\,\pm}\|e^{\frac{\pm\phi}{2}}\psi_{|\alpha|+|\beta|-4}\partial^\alpha_\beta f_\pm\|^2_{L^2_{v,x}}+\lambda \sum_{|\alpha|+|\beta|\le K,\,\pm}\|\psi_{|\alpha|+|\beta|-4}w_l(\al,\beta)\partial^\alpha_\beta f_\pm\|^2_{L^2_xL^2_D} \\\lesssim \|\partial_t\phi\|_{L^\infty_x}\E_{K,l}(t) + \sum_{|\alpha|+|\beta|\le K}\|\psi_{|\alpha|+|\beta|-4-\frac{1}{2N}}w_l(\al,\beta)\partial^\alpha_\beta f\|^2_{L^2_{v,x}}+(\E^{1/2}_{K,l}+\E_{K,l})\D_{K,l}+\E_{K,l}.
\end{multline}
Together with \eqref{priori1}, taking combination $\eqref{47}+\eqref{111ab}$, we have 
\begin{multline}\label{65}
	\partial_t\E_{K,l}(t)+\lambda\D_{K,l}(t) \lesssim \|\partial_t\phi\|_{L^\infty_x}\E_{K,l}(t)\\+\E_{K,l} + \sum_{|\alpha|+|\beta|\le K}\|\psi_{|\alpha|+|\beta|-4-\frac{1}{2N}}w_l(\al,\beta)\partial^\alpha_\beta f\|^2_{L^2_{v,x}},
\end{multline}
where we let 
\begin{align}\label{EE}
\E_{K,l}(t) = \sum_\pm\sum_{|\alpha|+|\beta|\le K}\|e^{\frac{\pm\phi}{2}}\psi_{|\alpha|+|\beta|-4}\partial^\alpha_\beta f_\pm\|^2_{L^2_{v,x}} + \sum_{|\alpha|\le K}
\|\psi_{|\alpha|-4}\partial^{\alpha}\nabla_x\phi\|_{L^2_x}^2.
\end{align}
It's straightforward to show that $\E_{K,l}$ satisfies \eqref{Defe}.  
Notice that there's $\|\psi_{|\alpha|-4}\partial^\alpha E(t)\|^2_{L^2_x}$  in $\E_{K,l}$ on the right hand side of \eqref{65}, and hence we can put $\|\psi_{|\alpha|-4}\partial^\alpha E(t)\|^2_{L^2_x}$, which is in $\D_{K,l}$, on the left hand side. 
\qe\end{proof}

 Therefore, now it suffices to control the last term in \eqref{72}. 
\begin{Lem}\label{Lem33}	Let $K\ge 4$ and $f$ to be the solution to \eqref{7}-\eqref{9} and assume the same assumption as in Lemma \ref{thm41}. It holds that for any $0<\delta<1$ and multi-indices $|\alpha|+|\beta|\le K$, 
		\begin{align}\label{112}
		&\notag\quad\,\|\psi_{|\alpha|+|\beta|-4-\frac{1}{2N}}w_l(\al,\beta)\partial^\alpha_\beta f\|^2_{L^2_{v,x}}\\
		&\lesssim \delta^2\partial_t\big(-\psi_{2|\alpha|+2|\beta|-8}w_l(\al,\beta)\partial^\alpha_\beta f_\pm e^{\frac{\pm\phi}{2}},(\theta^ww_l(\al,\beta){(\partial^\alpha_\beta f_\pm e^{\frac{\pm\phi}{2}})^\wedge})^\vee\big)_{L^2_{v,x}}\\
		&\notag\qquad+\delta^2\big(\D_{K,l}+(\E^{1/2}_{K,l}+\E_{K,l})\D_{K,l}+\|\partial_t\phi\|_{L^\infty_x}\E_{K,l}(t)+\E_{K,l}\big)+C_\delta\|\<v\>^{C_{K,l}}f\|_{L^2_{v,x}}^2,
	\end{align}
	where $\theta^w=\theta^w(v,D_v)$ and $\theta\in S(1)$ is defined by \eqref{107}. 
\end{Lem}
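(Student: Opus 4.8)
The plan is to reduce to $|\alpha|+|\beta|\ge 5$, pass to the Fourier side in $x$, peel off the fractional $x$-smoothing carried by $\tilde b^{1/2}$ by a pointwise Young inequality, and then dispose of the resulting high-order term by commuting the free transport operator with the bounded symbol $\theta$ of \eqref{107}. First I would observe that if $|\alpha|+|\beta|\le 4$ then $\psi_{|\alpha|+|\beta|-4-\frac{1}{2N}}=1$ and the left side of \eqref{112} is $\lesssim\E_{K,l}$ directly from \eqref{Defe}, so the inequality is trivial; hence one may assume $m:=|\alpha|+|\beta|-4\ge 1$, so that all the relevant coefficients $\psi_k$ equal $t^{Nk}$ with $N=N(\alpha,\beta)$ to be taken large. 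Under \eqref{priori1} one has $|e^{\pm\phi/2}|\approx 1$ with all its $x$-derivatives bounded, so it is harmless to insert $e^{\pm\phi/2}$ and take the Fourier transform in $x$, replacing $\partial^\alpha_\beta f_\pm$ by $(iy)^\alpha\partial_\beta\widehat{f_\pm}(v,y)$. The $\P f$ part contributes only quantities already sitting in $\E_{K,l}$ and $\D_{K,l}$, since $\partial_\beta$ then falls on Gaussians, so I would concentrate on $(\I-\P)f$.

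Next I would carry out the interpolation. With $N$ chosen large, a pointwise Young inequality of the shape
\[
\psi_{m-\frac{1}{2N}}\ \lesssim\ \delta\,\tilde b(v,y)^{1/2}\,\psi_m\ +\ C_\delta\,\<v\>^{-l_0|\alpha|/\delta_1}\,|y|^{-|\alpha|},
\]
combined with the velocity interpolation \eqref{fff} and the precise choice \eqref{w} of $w_l(\al,\beta)$ --- which lets each $\partial_\beta$ be traded for a factor $(\tilde a^{1/2})^w$ at the price of an algebraic velocity weight, exactly as in the proof of Lemma \ref{Lem27}, iterated until no $v$-derivative remains --- reduces the left side of \eqref{112} to
\[
\delta^2\big\|\psi_m\,\tilde b^{1/2}\,w_l(\al,\beta)\,(\partial^\alpha_\beta f)^\wedge\big\|^2_{L^2_{v,y}}\ +\ \delta^2\D_{K,l}\ +\ C_\delta\,\|\<v\>^{C_{K,l}}f\|^2_{L^2_{v,x}},
\]
the $(\tilde a^{1/2})^w$-pieces being absorbed into $\D_{K,l}$ through \eqref{Defd} and \eqref{fff}, and $C_{K,l}$ being taken large enough to swallow all velocity weights accumulated along the way.

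It then remains to bound the high-order term, which is where $\theta$ enters. The symbol $\theta\in S(1)$ of \eqref{107} is built so that the Poisson bracket $\{v\cdot y,\theta\}=-y\cdot\partial_\eta\theta$ controls $\tilde b$ from below modulo a bounded symbol, whence $(\tilde b^w\,u,u)\lesssim(\{v\cdot y,\theta\}^w u,u)+\|u\|^2_{L^2}$; and since $v\cdot y$ is affine in $(v,\eta)$, the composition formula \eqref{compostion} shows that $[v\cdot\nabla_x,\theta^w(v,D_v)]$ has Weyl symbol a fixed constant times $\{v\cdot y,\theta\}$ with no remainder. Using $\theta^w\in\mathrm{Op}(1)$ and Lemma \ref{inverse_bounded_lemma} to move $w_l(\al,\beta)$, $e^{\pm\phi/2}$ and $(\tilde a^{1/2})^w$ past $\theta^w$ (the commutators gaining one order), and writing $g_\pm:=e^{\pm\phi/2}w_l(\al,\beta)\partial^\alpha_\beta f_\pm$, this yields
\[
\big\|\psi_m\,\tilde b^{1/2}\,w_l(\al,\beta)(\partial^\alpha_\beta f)^\wedge\big\|^2_{L^2_{v,y}}\ \lesssim\ \Re\big(\psi_{2m}\,[v\cdot\nabla_x,\theta^w]\,g_\pm,\ g_\pm\big)_{L^2_{v,x}}\ +\ \E_{K,l}\ +\ \D_{K,l}.
\]
Then I would expand the commutator and substitute for $v\cdot\nabla_x g_\pm$ from the differentiated equations \eqref{35}--\eqref{36} (conjugated by $e^{\pm\phi/2}w_l(\al,\beta)$): the $\partial_t g_\pm$ contributions recombine into the total time-derivative displayed on the right of \eqref{112}, while the rest are error terms. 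The factor $\partial_t\psi_{2m}$, which obeys $|\partial_t\psi_{2m}|\lesssim\psi_{2m-1/N}=\psi_{m-\frac{1}{2N}}^2$, reproduces the left side and is reabsorbed thanks to the $\delta^2$; the linearized term is controlled by Lemma \ref{lemmaL} by $\D_{K,l}$; the collision term by Lemmas \ref{lemmat}--\ref{lemmag} by $(\E_{K,l}^{1/2}+\E_{K,l})\D_{K,l}$; the field terms $\pm E\cdot\nabla_v f_\pm$, $\mp\frac12 E\cdot v f_\pm$ and the inhomogeneity $\pm E\cdot v\mu^{1/2}$ by Lemmas \ref{Lem26}--\ref{Lem27}, \eqref{16}, \eqref{21} and \eqref{34} by $\D_{K,l}+\|\partial_t\phi\|_{L^\infty_x}\E_{K,l}+\E_{K,l}$; and the commutators $[\theta^w,w_l(\al,\beta)]$, $[\theta^w,e^{\pm\phi/2}]$, which gain regularity, by $\D_{K,l}$ and $C_\delta\|\<v\>^{C_{K,l}}f\|^2_{L^2_{v,x}}$. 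Collecting everything gives \eqref{112}.

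The main obstacle, as in the spatially inhomogeneous Boltzmann case, is the construction and exploitation of $\theta$: one must verify that \eqref{107} genuinely defines an $S(1)$-symbol whose transport commutator dominates $\tilde b$, which forces the gain $\delta_1$ in \eqref{b} to be chosen small relative to the velocity dissipation $(\tilde a^{1/2})^w$ --- a balance that is delicate precisely because $\gamma+2s<0$ makes that dissipation weak for large $|v|$. This is also why the commutator errors unavoidably lose velocity weight, so that an algebraic-decay norm $\|\<v\>^{C_{K,l}}f\|^2_{L^2_{v,x}}$ with $C_{K,l}$ large --- whose finiteness is assumed a priori in Theorem \ref{lem51} --- must appear on the right-hand side, exactly mirroring the phenomenon observed for soft-potential Boltzmann.
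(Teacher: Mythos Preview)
Your proposal is correct and follows essentially the same two-step architecture as the paper: a Young-type interpolation in $(v,y)$ to split off $\delta\,\tilde b^{1/2}$ (with the residual $v$-weight handled by a symbol-class bound $\psi_{|\beta|-4-\frac{1}{2N}}\langle v\rangle^{-l_0|\alpha|/\delta_1}\in S(\tfrac{\delta}{C_{0,\delta}}\tilde a^{1/2}+C_\delta\langle v\rangle^{C_K}\langle\eta\rangle^{-|\beta|})$ via Lemma~\ref{bound_varepsilon}), followed by the Poisson-bracket identity $\{\theta,v\cdot y\}=\tilde b+R_1+R_2$ and substitution of the equation for $v\cdot\nabla_x(\partial^\alpha_\beta f_\pm e^{\pm\phi/2})$. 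Two small corrections: the remainders $R_1,R_2$ lie in $S(\tilde a)$, not $S(1)$ (this is precisely why $\D_{K,l}$ rather than merely $\|u\|_{L^2}^2$ appears in \eqref{102}), and the linearized piece $K_5$ is bounded via Lemma~\ref{lemmat} (or $L\in S(\tilde a)$ together with Lemma~\ref{innerproduct}) rather than the coercivity Lemma~\ref{lemmaL}, since $\theta^w$ separates the two copies of $f$ and no sign is available.
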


\begin{proof}
{\bf Step 1.}
To deal with the last term of \eqref{72}, we choose constants 
\begin{align}\label{106b}\notag
	\delta_1=\delta_1(\alpha,\beta)&\in\Big(0,\min\big\{\frac{2s}{1+2s},\frac{1}{2}\big\}\Big],\\ 
	\delta_2= 1-\delta_1&\in\Big[\max\big\{\frac{1}{1+2s},\frac{1}{2}\big\},1\Big),\\ l_0=\gamma\delta_2&<0\notag
\end{align} to be determined later. Let $\chi_0$ to be a smooth cutoff function such that $\chi_0(z)$ equal to $1$ when $|z|<\frac{1}{2}$ and equal to $0$ when $|z|\ge 1$. Define 
\begin{align}\label{98}
	\tilde{b}(v,y) &= \<v\>^{l_0}|y|^{\delta_1},\\
	\chi(v,\eta) &= \chi_0\bigg(\frac{\<\eta\>\<v\>^{l_0}}{|y|^{\delta_2}}\bigg),\notag
\end{align}and
\begin{align}\label{107}
	\theta(v,\eta) = \<v\>^{l_0}|y|^{-1-\delta_2}y\cdot\eta\,\chi(v,\eta).
\end{align}
Notice that for any multi-indices $\alpha,\beta$, 
\begin{align*}
	\psi_{|\alpha|+|\beta|-4-\frac{1}{2N}} = \psi_{|\alpha|-4-\frac{1}{2N}}\psi_{|\beta|-4-\frac{1}{2N}}.
\end{align*}
 If $|\alpha|>4$, we choose $N=N(\alpha)$ such that 
\begin{align}\label{106a}
	-\frac{2N(|\alpha|-4)-1}{2} = -\frac{|\alpha|}{\delta_1}.
\end{align}
Then by the definition \eqref{98} of $\tilde{b}$ and Young's inequality,
\begin{align}\notag
	\psi_{|\alpha|-4-\frac{1}{2N}}&\lesssim \delta \big((\tilde{b}^{1/2})^{\frac{|\alpha|-4-\frac{1}{2N}}{|\alpha|-4}}\psi_{|\alpha|-4-\frac{1}{2N}}\big)^{\frac{|\alpha|-4}{|\alpha|-4-\frac{1}{2N}}}+C_{0,\delta}\big((\tilde{b}^{-1/2})^{\frac{|\alpha|-4-\frac{1}{2N}}{|\alpha|-4}}\big)^{2N(|\alpha|-4)}\\
	&\lesssim \delta\, \tilde{b}^{1/2}\psi_{|\alpha|-4} + C_{0,\delta}(\<v\>^{-\frac{l_0|\alpha|}{\delta_1}}|y|^{-|\alpha|}),\label{100a}
\end{align}where $C_{0,\delta}$ is a large constant depending on $\delta>0$ and $|\alpha|$. 
If $|\alpha|\le 4$, we choose $\eta\in[0,1)$ such that $-\frac{\eta}{2(1-\eta)}=\frac{-|\alpha|}{\delta_1}$. Then 
\begin{align*}
	\psi_{|\alpha|-4-\frac{1}{2N}}=1&\lesssim (\delta^\eta\,\tilde{b}^{\eta/2})^{\frac{1}{\eta}} + (\delta^{-\eta}\tilde{b}^{-\eta/2})^{\frac{1}{1-\eta}}\\
	&\lesssim \delta\,\tilde{b}^{1/2}+C_{0,\delta}\<v\>^{\frac{-l_0|\alpha|}{\delta_1}}|y|^{-|\alpha|}.
\end{align*} 
Thus, taking the Fourier transform $(\cdot)^\wedge$ with respect to $x$, we have 
\begin{align}\label{101}
	&\quad\,\|\psi_{|\alpha|+|\beta|-4-\frac{1}{2N}}w_l(\al,\beta)\partial^\alpha_\beta f\|_{L^2_{v,x}}\notag= \|\psi_{|\alpha|+|\beta|-4-\frac{1}{2N}}w_l(\al,\beta)(\partial^\alpha_\beta f)^\wedge(v,y)\|_{L^2_{v,y}}\\
	&\lesssim \delta\|\psi_{|\alpha|+|\beta|-4}\tilde{b}^{1/2}w_l(\al,\beta)(\partial^\alpha_\beta f)^\wedge(v,y)\|_{L^2_{v,y}} + C_{0,\delta} \|\psi_{|\beta|-4-\frac{1}{2N}}w_l(\al,\beta)\<v\>^{\frac{-l_0|\alpha|}{\delta_1}}\partial_\beta f\|_{L^2_{v,x}}.
\end{align}
To deal with the second right-hand term of \eqref{101}, we use a similar interpolation on $\tilde{a}^{1/2}$. In fact, if $|\beta|>4$, we have 
\begin{align}\label{107b}\notag
	\psi_{|\beta|-4-\frac{1}{2N}}\<v\>^{\frac{-l_0|\alpha|}{\delta_1}}&\lesssim \Big(\psi_{|\beta|-4-\frac{1}{2N}}\big(\frac{\delta}{C_{0,\delta}}\tilde{a}^{1/2}\big)^{\frac{|\beta|-4-\frac{1}{2N}}{|\beta|-4}}\Big)^{\frac{|\beta|-4}{|\beta|-4-\frac{1}{2N}}}\\
	&\qquad\notag+((C_{0,\delta}\delta^{-1}\tilde{a}^{-1/2})^{\frac{|\beta|-4-\frac{1}{2N}}{|\beta|-4}}\<v\>^{\frac{-l_0|\alpha|}{\delta_1}})^{2N(|\beta|-4)}\\
	&\lesssim \frac{\delta}{C_{0,\delta}}\psi_{|\beta|-4}\tilde{a}^{1/2}+C_{\delta}\tilde{a}^{-\frac{1}{2}(2N(|\beta|-4)-1)}\<v\>^{C_K},
\end{align}where $C_{0,\delta}$ comes from \eqref{100a} and $C_K$ depends only on $K$.
When $|\alpha|>4$, recalling the definition \eqref{11a} of $\tilde{a}$, \eqref{106a} gives that 
\begin{align*}
	\tilde{a}^{-\frac{1}{2}(2N(|\beta|-4)-1)}\lesssim (\<v\>^{-\gamma}\<\eta\>^{-2s})^{\frac{|\beta|-4}{|\alpha|-4}\big(\frac{2|\alpha|}{\delta_1}+1\big)-1}.
\end{align*}
Now we choose $\delta_1=\delta_1(\alpha,\beta)>0$ sufficiently small such that 
\begin{align*}
	-2s\Big(\frac{|\beta|-4}{|\alpha|-4}\Big(\frac{2|\alpha|}{\delta_1}+1\Big)-1\Big)\le -|\beta|.
\end{align*}
Then, 
\begin{align*}
\tilde{a}^{-\frac{1}{2}(2N(|\beta|-4)-1)}\lesssim \<v\>^{C_{K}}\<\eta\>^{-|\beta|}.
\end{align*}
When $|\alpha|\le 4$, $N$ can be arbitrary large. Then we choose $N$ sufficiently large that 
\begin{align*}
	\tilde{a}^{-\frac{1}{2}(2N(|\beta|-4)-1)}\lesssim \<v\>^{C_{K}}\<\eta\>^{-|\beta|}. 
\end{align*}
Thus, \eqref{107b} becomes
\begin{align*}
	\psi_{|\beta|-4-\frac{1}{2N}}\<v\>^{\frac{-l_0|\alpha|}{\delta_1}}\lesssim \frac{\delta}{C_{0,\delta}}\psi_{|\beta|-4}\tilde{a}^{1/2}+C_{\delta}\<v\>^{C_{K}}\<\eta\>^{-|\beta|}.
\end{align*}
If $|\beta|\le 4$, we choose $\eta\in(0,1)$ such that $\frac{-\eta}{2(1-\eta)}=\frac{-|\beta|}{2s}$. Then 
\begin{align*}
	\psi_{|\beta|-4-\frac{1}{2N}}\<v\>^{\frac{-l_0|\alpha|}{\delta_1}}=\<v\>^{\frac{-l_0|\alpha|}{\delta_1}}&\lesssim \frac{\delta}{C_{0,\delta}}\,\tilde{a}^{1/2}+C_\delta(\tilde{a}^{-1/2}\<v\>^{\frac{-l_0|\alpha|}{\delta_1}})^{\frac{\eta}{(1-\eta)}}\\
	&\lesssim \frac{\delta}{C_{0,\delta}}\,\tilde{a}^{1/2}+C_\delta\<v\>^{C_K}\<\eta\>^{-|\beta|}.
\end{align*}
Thus, whenever $|\beta|\le 4$ or $|\beta|>4$, we have $$\psi_{|\beta|-4-\frac{1}{2N}}\<v\>^{\frac{-l_0|\alpha|}{\delta_1}}\in S(\frac{\delta}{C_{0,\delta}}\,\tilde{a}^{1/2}+C_\delta\<v\>^{C_K}\<\eta\>^{-|\beta|})$$ uniformly in $\delta$, as a symbol in $(v,\eta)$. 
Then using Lemma \ref{bound_varepsilon} with respect to $v$, we have 
\begin{align*}
	&\|\psi_{|\beta|-4-\frac{1}{2N}}w_l(\al,\beta)\<v\>^{\frac{-l_0|\alpha|}{\delta_1}}\partial_\beta f\|_{L^2_{v,x}}\\&\quad\lesssim \frac{\delta}{C_{0,\delta}}\|\psi_{|\beta|-4}(\tilde{a}^{1/2})^ww_l(0,\beta_1)\partial_\beta f\|_{L^2_{v,x}}+C_\delta\|\<v\>^{C_{K,l}}f\|_{L^2_{v,x}}\\
	&\quad\lesssim \frac{\delta}{C_{0,\delta}}\D^{1/2}_{K,l}+C_\delta\|\<v\>^{C_{K,l}}f\|_{L^2_{v,x}}. 
\end{align*}
Plugging this into \eqref{101}, we have 
\begin{align}\label{101a}\notag
	&\quad\,\|\psi_{|\alpha|+|\beta|-4-\frac{1}{2N}}w_l(\al,\beta)\partial^\alpha_\beta f\|^2_{L^2_{v,x}}\\&\lesssim \delta^2\|\psi_{|\alpha|+|\beta|-4}\tilde{b}^{1/2}w_l(\al,\beta)(\partial^\alpha_\beta f)^\wedge(v,y)\|^2_{L^2_{v,y}}+\delta^2\D_{K,l}+C_\delta\|\<v\>^{C_{K,l}}f\|_{L^2_{v,x}}^2.
\end{align}
Now it suffices to eliminate the first right-hand term of \eqref{101a}.

\medskip

{\bf Step 2.} 
Recalling \eqref{107}, we regard $\theta$ as a symbol in $(v,\eta)$ with parameter $y$. Then, 
\begin{align*}
	|\theta(v,\eta)| = \<v\>^{l_0}|y|^{-1-\delta_2}|y\cdot\eta|\,\chi(v,\eta)
	&\lesssim 1.
\end{align*}
Direct calculation gives that $\partial^\alpha_v\partial^\beta_\eta\theta\lesssim 1$ and hence $\theta\in S(1)$ as a symbol on $(v,\eta)$. 
On the other hand, regarding the Poisson bracket on $(v,\eta)$ we have 
\begin{align*}
	\{\theta,v\cdot y\}&= \<v\>^{l_0}|y|^{1-\delta_2}+\<v\>^{l_0}|y|^{1-\delta_2}(\chi(v,\eta)-1)+\<v\>^{l_0}|y|^{-1-\delta_2}y\cdot\eta\,\partial_\eta\chi\cdot y\\
	&=: \tilde{b} + R_1 + R_2. 
\end{align*}
Now we claim that $R_1,R_2\in S(\tilde{a})$. Indeed, noticing the support of $\chi-1$, by \eqref{106b} we have 
\begin{align*}
	|R_1|\le \<v\>^{l_0}\<\eta\>^{\frac{1-\delta_2}{\delta_2}}\<v\>^{l_0\frac{1-\delta_2}{\delta_2}}
	&\le \<v\>^{\gamma}\<\eta\>^{2s}\le \tilde{a}.  
\end{align*} 
For $R_2$, since $1-2\delta_2\le 0$, we have 
\begin{align*}
	|R_2|\le\<v\>^{2l_0}|y|^{1-2\delta_2}|\eta|\1_{\<\eta\>\<v\>^{l_0}\le|y|^{\delta_2}}\le \<v\>^{\frac{l_0}{\delta_2}}\<\eta\>^{\frac{1-\delta_2}{\delta_2}}\le \tilde{a}.
\end{align*}Higher derivative estimate can be calculated by Leibniz's formula and hence, $R_1,R_2\in S(\tilde{a})$. Thus, by Lemma \ref{innerproduct} and \eqref{compostion}, we have 
\begin{align}\label{102}\notag
	\|\tilde{b}^{1/2}\widehat{g}(v,y)\|^2_{L^2_{v,y}} &=\big(\tilde{b}(v,y)\widehat{g},\widehat{g}\big)_{L^2_{v,y}} \\\notag
	&= \Re\big(\{\theta,v\cdot y\}^w(v,D_v)\widehat{g},\widehat{g}\big)_{L^2_{v,y}} + \Re ((R_1+R_2)^w(v,D_v)\widehat{g},\widehat{g})_{L^2_{v,y}}\\\notag
	&\le 2\pi\Re\big(iv\cdot y\widehat{g},\theta^w(v,D_v)\widehat{g}\big)_{L^2_{v,y}}+C\|(\tilde{a}^{1/2})^wg\|^2_{L^2_{v,x}}\\
	&\le 2\pi\Re\big(v\cdot \nabla_x{g},(\theta^w\widehat{g})^\vee\big)_{L^2_{v,x}}+C\|(\tilde{a}^{1/2})^wg\|^2_{L^2_{v,x}},
\end{align}
for any $g$ in a suitable smooth space. Here and after, we write  $\theta^w=\theta^w(v,D_v)$.
Note that 
\begin{align*}
	\Re 2\pi\big(iv\cdot y\widehat{g},\theta^w(v,D_v)\widehat{g}\big)_{L^2_{v,y}}
	&= 2\pi\big(iv\cdot y\widehat{g},\theta^w(v,D_v)\widehat{g}\big)_{L^2_{v,y}}+2\pi\big(\theta^w(v,D_v)\widehat{g},iv\cdot y\widehat{g}\big)_{L^2_{v,y}}\\
	&= 2\pi\big(i[\theta(v,D_v),v\cdot y]^w\widehat{g},\widehat{g}\big)_{L^2_{v,y}}\\
	&= \big(\{\theta,v\cdot y\}^w(v,D_v)\widehat{g},\widehat{g}\big)_{L^2_{v,y}}, 
\end{align*}
and the Weyl quantization $(\cdot)^w$ is acting on $(v,\eta)$ with parameter $y$. 

Now we let $g = \psi_{|\alpha|+|\beta|-4}w_l(\al,\beta)\partial^\alpha_\beta f_\pm e^{\frac{\pm\phi}{2}}$ in \eqref{102}, then 
\begin{align}\label{104}
	&\notag\quad\,\|\tilde{b}^{1/2}\psi_{|\alpha|+|\beta|-4}w_l(\al,\beta)(\partial^\alpha_\beta f_\pm)^\wedge(v,y) e^{\frac{\pm\phi}{2}}\|_{L^2_{v,x}}\\
	&\lesssim\Re\big(v\cdot \nabla_x{\psi_{|\alpha|+|\beta|-4}w_l(\al,\beta)\partial^\alpha_\beta f}e^{\frac{\pm\phi}{2}},(\theta^w\psi_{|\alpha|+|\beta|-4}w_l(\al,\beta){(\partial^\alpha_\beta f_\pm e^{\frac{\pm\phi}{2}})^\wedge})^\vee\big)_{L^2_{v,x}}+\D_{K,l}\\
	&=: K_0 + \D_{K,l}.\notag
\end{align}
By equation \eqref{7}, we have 
\begin{align*}
	&\quad\,v\cdot\nabla_x(\partial^\alpha_\beta f_\pm e^{\frac{\pm\phi}{2}})\\
	&= v_i\partial^{\alpha+e_i}_\beta f_\pm e^{\frac{\pm\phi}{2}} \pm \frac{1}{2}v_i\partial^{e_i}\phi e^{\frac{\pm\phi}{2}}\partial^\alpha_\beta f_\pm\\
	&= \partial_\beta\big(v_i\partial^{\alpha+e_i}f_\pm e^{\frac{\pm\phi}{2}}\big)-\sum_{0\neq \beta_1\le \beta}{}\partial_{\beta_1}v_i\partial^{\alpha+e_i}_{\beta-\beta_1}f_\pm e^{\frac{\pm\phi}{2}}\pm \frac{1}{2}v_i\partial^{e_i}\phi e^{\frac{\pm\phi}{2}}\partial^\alpha_\beta f_\pm\\
	&=-\partial_t\partial^\alpha_\beta f_\pm e^{\frac{\pm\phi}{2}} \mp \frac{1}{2}\sum_{\alpha_1\le\alpha}\sum_{\beta_1\le\beta}{}\partial^{e_i+\alpha_1}\phi\partial_{\beta_1}v_i\partial^{\alpha-\alpha_1}_{\beta-\beta_1}f_\pm e^{\frac{\pm\phi}{2}}\\
	&\qquad\pm \sum_{\alpha_1\le\alpha}C^{\alpha_1}_\alpha\partial^{e_i+\alpha_1}\phi\partial^{\alpha-\alpha_1}_{\beta+e_i}f_\pm e^{\frac{\pm\phi}{2}} 
	 \mp \partial^{e_i+\alpha}\phi\partial_\beta(v_i\mu^{1/2})e^{\frac{\pm\phi}{2}}+\partial^\alpha_\beta L_\pm fe^{\frac{\pm\phi}{2}}\\
	 &\qquad+\partial^\alpha_\beta\Gamma_\pm(f,f)e^{\frac{\pm\phi}{2}} -\sum_{0\neq \beta_1\le \beta}{}\partial_{\beta_1}v_i\partial^{\alpha+e_i}_{\beta-\beta_1}f_\pm e^{\frac{\pm\phi}{2}}\pm \frac{1}{2}v_i\partial^{e_i}\phi e^{\frac{\pm\phi}{2}}\partial^\alpha_\beta f_\pm
\end{align*}
Thus, 
\begin{align*}
 &K_0 =\psi_{2|\alpha|+2|\beta|-8}\bigg(
 \Re\big(-w_l(\al,\beta)\partial_t\partial^\alpha_\beta f_\pm e^{\frac{\pm\phi}{2}},(\theta^ww_l(\al,\beta){(\partial^\alpha_\beta f_\pm e^{\frac{\pm\phi}{2}})^\wedge})^\vee\big)_{L^2_{v,x}} \\
&\mp \Re\big(w_l(\al,\beta)\frac{1}{2}\sum_{\substack{\alpha_1\le\alpha\\\beta_1\le\beta}}{}\partial^{e_i+\alpha_1}\phi\partial_{\beta_1}v_i\partial^{\alpha-\alpha_1}_{\beta-\beta_1}f_\pm e^{\frac{\pm\phi}{2}},(\theta^ww_l(\al,\beta){(\partial^\alpha_\beta f_\pm e^{\frac{\pm\phi}{2}})^\wedge})^\vee\big)_{L^2_{v,x}} \\
& \pm \Re\big(w_l(\al,\beta)\sum_{\alpha_1\le\alpha}C^{\alpha_1}_\alpha\partial^{e_i+\alpha_1}\phi\partial^{\alpha-\alpha_1}_{\beta+e_i}f_\pm e^{\frac{\pm\phi}{2}},(\theta^ww_l(\al,\beta){(\partial^\alpha_\beta f_\pm e^{\frac{\pm\phi}{2}})^\wedge})^\vee\big)_{L^2_{v,x}} \\
 &\mp \Re\big(w_l(\al,\beta)\partial^{e_i+\alpha}\phi\partial_\beta(v_i\mu^{1/2})e^{\frac{\pm\phi}{2}},(\theta^ww_l(\al,\beta){(\partial^\alpha_\beta f_\pm e^{\frac{\pm\phi}{2}})^\wedge})^\vee\big)_{L^2_{v,x}}
 \\&+\Re\big(w_l(\al,\beta)\partial^\alpha_\beta L_\pm f e^{\frac{\pm\phi}{2}}, (\theta^ww_l(\al,\beta){(\partial^\alpha_\beta f_\pm e^{\frac{\pm\phi}{2}})^\wedge})^\vee\big)_{L^2_{v,x}}  \\ &+\Re\big(w_l(\al,\beta)\partial^\alpha_\beta\Gamma_\pm(f,f)e^{\frac{\pm\phi}{2}},(\theta^ww_l(\al,\beta){(\partial^\alpha_\beta f_\pm e^{\frac{\pm\phi}{2}})^\wedge})^\vee\big)_{L^2_{v,x}}  \\
 &-\Re\big(w_l(\al,\beta)\sum_{0\neq \beta_1\le \beta}{}\partial_{\beta_1}v_i\partial^{\alpha+e_i}_{\beta-\beta_1}f_\pm e^{\frac{\pm\phi}{2}},(\theta^ww_l(\al,\beta){(\partial^\alpha_\beta f_\pm e^{\frac{\pm\phi}{2}})^\wedge})^\vee\big)_{L^2_{v,x}}\\
 &\pm \Re\big(w_l(\al,\beta)\frac{1}{2}v_i\partial^{e_i}\phi e^{\frac{\pm\phi}{2}}\partial^\alpha_\beta f_\pm,(\theta^ww_l(\al,\beta){(\partial^\alpha_\beta f_\pm e^{\frac{\pm\phi}{2}})^\wedge})^\vee\big)_{L^2_{v,x}}\bigg).
\end{align*}
Denote these terms by $K_1$ to $K_8$. Noticing that there's coefficient $\delta$ in \eqref{101a}, we only need to obtain an upper bound for these terms. 
For $K_1$, noticing that $\theta^w$ is self-adjoint, we have 
\begin{align*}
	K_1&\le \frac{1}{2}\partial_t\big(-\psi_{2|\alpha|+2|\beta|-8}w_l(\al,\beta)\partial^\alpha_\beta f_\pm e^{\frac{\pm\phi}{2}},(\theta^ww_l(\al,\beta){(\partial^\alpha_\beta f_\pm e^{\frac{\pm\phi}{2}})^\wedge})^\vee\big)_{L^2_{v,x}}\\
	&\qquad+C\big|\big(-\psi_{2|\alpha|+2|\beta|-8-\frac{1}{N}}w_l(\al,\beta)\partial^\alpha_\beta f_\pm e^{\frac{\pm\phi}{2}},(\theta^ww_l(\al,\beta){(\partial^\alpha_\beta f_\pm e^{\frac{\pm\phi}{2}})^\wedge})^\vee\big)_{L^2_{v,x}}\big|\\
	&\qquad+C\big|\big(\partial_t\phi\psi_{2|\alpha|+2|\beta|-8}w_l(\al,\beta)\partial^\alpha_\beta f_\pm e^{\frac{\pm\phi}{2}},(\theta^ww_l(\al,\beta){(\partial^\alpha_\beta f_\pm e^{\frac{\pm\phi}{2}})^\wedge})^\vee\big)_{L^2_{v,x}}\big|.
\end{align*}
We denote the second and third term on the right hand side by $K_{1,1}$ and $K_{1,2}$. Since $\theta\in S(1)$, $\theta^w$ is a bounded operator on $L^2_{v,y}$. The boundedness of $\theta^w$ will be frequently used in the following without further mentioned. Using the trick from \eqref{100a}-\eqref{101a} to the term for the first $f_\pm$ in $K_{1,1}$, we have 
\begin{align*}
	K_{1,1}\lesssim \delta^2\|\psi_{|\alpha|+|\beta|-4}\tilde{b}^{1/2}w_l(\al,\beta)(\partial^\alpha_\beta f)^\wedge\|_{L^2_{v,y}}^2+\delta^2\D_{K,l}+C_\delta\|\<v\>^{C_{K,l}}f\|_{L^2_{v,x}}^2+\E_{K,l}. 
\end{align*}
The term $K_{1,2}$ is similar to the case $I_1$, i.e.
\begin{align*}
	K_{1,2} \lesssim \|\partial_t\phi\|_{L^\infty_x}\|\psi_{|\alpha|+|\beta|-4}w_l(\al,\beta)\partial^\alpha_\beta f\|_{L^2_{v,x}}^2\lesssim \|\partial_t\phi\|_{L^\infty_x}\E_{K,l}(t). 
\end{align*}
For the term $K_2$ with $\alpha_1=\beta_1=0$, a nice observation is that it's the same as $K_8$ except the sign and hence, they are eliminated. For $K_2$ with $\alpha_1+\beta_1\neq 0$, the order of derivatives for the first $f_\pm$ is less or equal to $K-1$ and hence, the weight can be controlled as $w_l(\al,\beta)\partial_{\beta_1}v_i\lesssim \<v\>^{\gamma}w_l(\al-\al_1,\beta-\beta_1)$. Then similar to Lemma \ref{Lem26}, by noticing $\theta^w$ is bounded on $L^2_{v,y}$, we have 
\begin{align*}
	|K_2+K_8|\lesssim \E^{1/2}_{K,l}\D_{K,l}.
\end{align*}
For $K_3$, when $\alpha_1=0$, noticing $\theta^w$ is self-adjoint, we use integration by parts over $v$ to obtain 
\begin{align*}
|K_3| &= \big|\big(\psi_{2|\alpha|+2|\beta|-8}w_l(\al,\beta)\partial^{e_i}\phi\partial^{\alpha}_{\beta+e_i}f_\pm e^{\frac{\pm\phi}{2}},(\theta^ww_l(\al,\beta){(\partial^\alpha_\beta f_\pm e^{\frac{\pm\phi}{2}})^\wedge})^\vee\big)_{L^2_{v,x}}\big|\\
&\lesssim \big|\big(\psi_{2|\alpha|+2|\beta|-8}\partial_{e_i}(w_l(\al,\beta))\partial^{e_i}\phi\partial^{\alpha}_{\beta}f_\pm e^{\frac{\pm\phi}{2}},(\theta^ww_l(\al,\beta){(\partial^\alpha_\beta f_\pm e^{\frac{\pm\phi}{2}})^\wedge})^\vee\big)_{L^2_{v,x}}\big|\\
&\quad+\big|\big(\psi_{2|\alpha|+2|\beta|-8}w_l(\al,\beta)\partial^{e_i}\phi\partial^{\alpha}_{\beta}f_\pm e^{\frac{\pm\phi}{2}},(\underbrace{[\partial_{e_i},\theta^w]}_{\in S(1)}w_l(\al,\beta){(\partial^\alpha_\beta f_\pm e^{\frac{\pm\phi}{2}})^\wedge})^\vee\big)_{L^2_{v,x}}\big|\\
&\quad+\big|\big(\psi_{2|\alpha|+2|\beta|-8}w_l(\al,\beta)\partial^{e_i}\phi\partial^{\alpha}_{\beta}f_\pm e^{\frac{\pm\phi}{2}},(\theta^w\partial_{e_i}(w_l(\al,\beta)){(\partial^\alpha_\beta f_\pm e^{\frac{\pm\phi}{2}})^\wedge})^\vee\big)_{L^2_{v,x}}\big|\\
&\lesssim \|\partial^{e_i}\phi\|_{H^2_x}\|\psi_{|\alpha|+|\beta|-4}w_l(\al,\beta)\partial^{\alpha}_{\beta}f_\pm\|_{L^2_{v,x}}^2\\
&\lesssim \delta_0\E_{K,l}(t),
\end{align*}with the help of \eqref{13} and $\theta\in S(1)$. 
When $\alpha_1\neq 0$, then $\alpha\neq 0$, the total number of derivatives on the first $f_\pm$ is less or equal to $K$ and there's at least one derivative on the second $f_\pm$ with respect to $x$. Thus, 
\begin{align*}
	|K_3|\lesssim \E^{1/2}_{K,l}\D_{K,l}.
\end{align*}
For $K_4$, there's exponential decay in $v$ and hence 
	$|K_4|\lesssim \E_{K,l}. $
For $K_5$, recalling that we only need an upper bound and using Lemma \ref{lemmat} with $\pa^\al\mu=0$ for $|\al|\ge 1$, we have $|K_5|\lesssim\E_{K,l}+\D_{K,l}.$
For $K_6$, we use Lemma \ref{lemmag} to obtain
\begin{align*}
	|K_6|\lesssim\E^{1/2}_{K,l}\D_{K,l}+\E_{K,l}\D^{1/2}_{K,l}\lesssim (\E^{1/2}_{K,l}+\E_{K,l})\D_{K,l}+\E_{K,l}.
\end{align*}
For $K_7$, since $\beta_1\neq 0$, one has $|\partial_{\beta_1}v_i|\lesssim 1$ and the total number of derivatives on the first $f_\pm$ is less or equal to $K$. Also, $w(\al,\beta)=\<v\>^\gamma w(|\al|+1,|\beta|-1)$. These yield that $|K_7|\lesssim \E_{K,l}$. Combining the above estimate with \eqref{104} and choosing $\delta>0$ sufficiently small, we have 
	\begin{align*}
		&\notag\quad\,\|\psi_{|\alpha|+|\beta|-4}\tilde{b}^{1/2}w_l(\al,\beta)(\partial^\alpha_\beta f)^\wedge(v,y)\|^2_{L^2_{v,y}}\\
		&\lesssim \frac{1}{2}\partial_t\big(-\psi_{2|\alpha|+2|\beta|-8}w_l(\al,\beta)\partial^\alpha_\beta f_\pm e^{\frac{\pm\phi}{2}},(\theta^ww_l(\al,\beta){(\partial^\alpha_\beta f_\pm e^{\frac{\pm\phi}{2}})^\wedge})^\vee\big)_{L^2_{v,x}}\\
		&\notag\qquad+(\E^{1/2}_{K,l}+\E_{K,l})\D_{K,l}+C_\delta\|\<v\>^{C_{K,l}}f\|_{L^2_{v,x}}^2+\|\partial_t\phi\|_{L^\infty_x}\E_{K,l}(t)+\delta^2\D_{K,l}+\E_{K,l}.
	\end{align*}
Substituting this into \eqref{101a}, we have the desired estimate \eqref{112}. This completes the proof of Lemma \ref{Lem33}.  

\qe\end{proof}

\begin{proof}[Proof of Theorem \ref{lem51}]
Substituting \eqref{112} into \eqref{72}, we have that for $0<\delta<1$, 
\begin{multline*}
	\partial_t\E_{K,l}(t)+\lambda D_{K,l}(t)
	\\\lesssim \delta^2\sum_{|\alpha|+|\beta|\le K}\partial_t\big(-\psi_{2|\alpha|+2|\beta|-8}w_l(\al,\beta)\partial^\alpha_\beta f_\pm e^{\frac{\pm\phi}{2}},(\theta^ww_l(\al,\beta){(\partial^\alpha_\beta f_\pm e^{\frac{\pm\phi}{2}})^\wedge})^\vee\big)_{L^2_{v,x}}\\
	\notag\qquad+ \|\partial_t\phi\|_{L^\infty_x}\E_{K,l}(t) +\delta^2\big(\D_{K,l}+(\E^{1/2}_{K,l}+\E_{K,l})\D_{K,l}\big)+\E_{K,l}+C_\delta\|\<v\>^{C_{K,l}}f\|_{L^2_{v,x}}^2,
\end{multline*}
By \eqref{34} and \eqref{priori1}, we have $\|\partial_t\phi\|_{L^\infty_x}\lesssim \E^{1/2}_{K,l}\lesssim \delta^{1/2}_0$ 
 Using the $a$ $priori$ assumption \eqref{priori1} and choosing $\delta,\delta_0>0$ sufficiently small, we have 
\begin{multline*}
	\partial_t\E_{K,l}(t)+\lambda \D_{K,l}(t)\lesssim
	\E_{K,l}(t)+\|\<v\>^{C_{K,l}}f\|_{L^2_{v,x}}^2\\
	+\delta^2\sum_{|\alpha|+|\beta|\le K}\partial_t\big(-\psi_{2|\alpha|+2|\beta|-8}w_l(\al,\beta)\partial^\alpha_\beta f_\pm e^{\frac{\pm\phi}{2}},(\theta^ww_l(\al,\beta){(\partial^\alpha_\beta f_\pm e^{\frac{\pm\phi}{2}})^\wedge})^\vee\big)_{L^2_{v,x}}.
\end{multline*}
By solving this ODE with neglecting $\lambda\D_{K,l}(t)$ and noticing 
\begin{align*}
	\big|\big(-\psi_{2|\alpha|+2|\beta|-8}w_l(\al,\beta)\partial^\alpha_\beta f_\pm e^{\frac{\pm\phi}{2}},(\theta^ww_l(\al,\beta){(\partial^\alpha_\beta f_\pm e^{\frac{\pm\phi}{2}})^\wedge})^\vee\big)_{L^2_{v,x}}\big|
	\lesssim \E_{K,l}(t),
\end{align*}
 we have that for $0\le t\le t_0$,
\begin{align}\notag
	\E_{K,l}(t) &\lesssim \E_{K,l}(0)+\delta^2\E_{K,l}(t)+\delta^2\E_{K,l}(0)+\int^t_0(\E_{K,l}+\|\<v\>^{C_{K,l}}f\|_{L^2_{v,x}})\,d\tau,\\
	\E_{K,l}(t)&\lesssim \epsilon^2_1,\label{76}
\end{align}by choosing $\delta>0$ and $t_0=t_0(\epsilon_1,\|\<v\>^{C_{K,l}}f\|_{L^2_{v,x}})>0$ sufficiently small. 
Here we used $\E_{K,l}(0)\le \E_{4,l}(0)$. 
This completes the proof of Theorem \ref{lem51}. 

\qe\end{proof}

\begin{proof}
	[Proof of Theorem \ref{main2}] 
	We prove Theorem \ref{main2} in four steps. 
	
	\smallskip
	\noindent{\bf Step 1.}
It follows immediately from the $a$ $priori$ estimate \eqref{priori1} and Theorem \ref{lem51} that $$\sup_{0\le t\le t_0}\E_{K,l}\le C_{K,l}\epsilon^2_1$$ holds true for some small $t_0>0$, as long as $\epsilon_1$ is sufficiently small. 
The rest is to prove the local existence and uniqueness of solutions in terms of the energy norm $\E_{K,l}$. One can use the iteration on system 
	\begin{equation*}
		\left\{\begin{aligned}
			&\partial_tf^{n+1}_\pm+v\cdot\nabla_xf^{n+1}_\pm\mp\nabla_x\phi^n\cdot\nabla_xf^{n+1}_\pm\pm\frac{1}{2}\nabla_x\phi^n\cdot vf^{n+1}_\pm\\&\qquad\qquad\pm v\mu^{1/2}\cdot\nabla_x\phi^n-L_\pm f=\Gamma_\pm(f^n,f^{n+1}),\\
			&-\Delta_x\phi^{n+1}=\int_{\R^3}\mu^{1/2}(f^{n+1}_+-f^{n+1}_-)\,dv,\\
			&f^{n+1}|_{t=0}=f_0,
		\end{aligned}\right.
	\end{equation*}
to find the local existence and the details of proof are omitted for brevity; see \cite{Guo2012, Strain2013} and \cite{Gressman2011}.


\smallskip
\noindent{\bf Step 2.}
	Notice that the constants in Lemma \ref{lem51} are independent of time $t$ and hence, we can apply Theorem \ref{lem51} to any time interval with length less than $t_0$ to obtain that, for $0<\tau<T$, 
\begin{align}\label{106}
	\sup_{\tau\le t\le T}\E_{K,l}(t)\le \epsilon_1^2C_{\tau,T,K,l}. 
\end{align}
Recalling Definition \eqref{Defe} of $\E_{K,l}$ and the choice \eqref{93} of $\psi$, we have that, for any $0<\tau<T$, $l\ge 0$ and $K\ge 4$,
\begin{align}\label{79}
	\sup_{\tau\le t\le T}\sum_{|\alpha|+|\beta|\le K}\|w_l(\al,\beta)\partial^\alpha_\beta f\|^2_{L^2_{v,x}}+\sup_{\tau\le t\le T}\sum_{|\alpha|\le K}\|\partial^\alpha\nabla_x\phi\|_{L^2_x}^2\le  C_{\tau,T,l}<\infty.
\end{align}	Notice that $\psi_{|\alpha|+|\beta|-4}^{-1}$ is singular near $t=0$ when $|\alpha|+|\beta|>4$, so the constant is necessarily depending on $\tau$. 
This proves \eqref{19a}.

	\smallskip
Let $l\ge 0$, $K\ge 4$ and assume additionally $\E_{4,C_{K,l}}(0)$ is sufficiently small for some large constant $C_{K,l}>0$ to be chosen later. Then by \eqref{106}, we have 
\begin{align}\label{107a}
	\sup_{\tau\le t\le T}\E_{K,C_{K,l}}(t)\le \epsilon_1^2C_{\tau,T,K,l}.
\end{align}
%
For the regularity on $t$, the technique above is not applicable and we only make a rough estimate. For any $t>0$, applying $\<v\>^l\partial^k_t\partial^\alpha_\beta$ with $k,l\ge 0$, $|\alpha|+|\beta|\le K$ to equation \eqref{7} and taking $L^2_{v,x}$ norms, we have   
\begin{multline}\label{105}
\|\<v\>^l\partial^{k+1}_t\partial^\alpha_\beta f_\pm\|^2_{L^2_{v,x}}
\lesssim \|\<v\>^lv\cdot\nabla_x\partial^k_t\partial^\alpha_\beta f_\pm\|^2_{L^2_{v,x}}+\|\<v\>^l\sum_{k_1\le k}\partial^{\alpha}_\beta\big(\partial^{k_1}_t\nabla_x\phi\cdot v\partial^{k-k_1}_tf_\pm\big)\|^2_{L^2_{v,x}}
\\
\qquad+\|\<v\>^l\sum_{k_1\le k}\partial^\alpha\big(\partial^{k_1}_t\nabla_x\phi\cdot\nabla_v\partial^{k-k_1}_t\partial_\beta f_\pm\big)\|_{L^2_{v,x}}^2+\|\<v\>^l\partial^k_t\partial^\alpha\nabla_x\phi\cdot \partial_\beta(v\mu^{1/2})\|^2_{L^2_{v,x}}\\
\qquad+\|\<v\>^l\partial^\alpha_\beta L_\pm \partial^k_tf_\pm\|^2_{L^2_{v,x}} + \|\<v\>^l\sum_{k_1\le k}\partial^\alpha_\beta\Gamma_\pm(\partial^{k_1}_tf,\partial^{k-k_1}_tf)\|_{L^2_{v,x}}^2. 
\end{multline}
Denoting $$\E_{K,l,k}=\sum_{|\alpha|+|\beta|\le K,k_1\le k}\|\<v\>^l\partial^\alpha_\beta\partial^{k_1}_tf\|_{L^2_{v,x}},$$ we estimate the right-hand terms of \eqref{105} one by one. The first term on the right hand is bounded above by $\E_{K+1,l+1,k}$.
Applying the trick in Lemma \ref{Lem26}, the second right hand term of \eqref{105} is bounded above by 
\begin{align*}
\sum_{|\alpha|+|\beta|\le K,\,k_1\le k}\|\partial^{k_1}_t\partial^\alpha_\beta\nabla_x\phi\|^2_{L^2_x}\sum_{|\alpha|+|\beta|\le K,\,k_1\le k}\|\<v\>^{l+1}\partial^{k_1}_t\partial^\alpha_\beta f_\pm\|^2_{L^2_{v,x}}\lesssim \E_{K,l+1,k}^2.
\end{align*}
Similarly, applying the trick in Lemma \ref{Lem27}, the third term of \eqref{105} is bounded above by $\E_{K+1,l+1,k}^2.$ For the fourth term, when $k=0$, it's bounded above by $\E_{K,l,0}$. When $k\ge 1$, by using \eqref{34a}, it's bounded above by $\E_{K,l,k-1}$. For the fifth term, noticing $L_\pm\in S(\tilde{a})\subset S(\<v\>^{\gamma+2s}\<\eta\>^{2s})$ and $s\in(0,1)$, we have 
\begin{align*}
\|\<v\>^l\partial^\alpha_\beta L_\pm \partial^k_tf_\pm\|^2_{L^2_{v,x}}\lesssim \|\<v\>^{l+{\gamma+2s}}\<D_v\>^2\<(D_x,D_v)\>^K \partial^k_tf_\pm\|^2_{L^2_{v,x}}\lesssim \E_{K+2,l+{\gamma+2s},k}.
\end{align*}
For the last term, using \eqref{12a}, it's bounded above by 
\begin{align*}
\sum_{|\alpha|+|\beta|\le K+2,\,k_1\le k}\|\<v\>^{l+\frac{\gamma+2s}{2}}\partial^\alpha_\beta\partial^{k_1}_tf\|^2_{L^2_{v,x}}\lesssim \E_{K+2,l+\frac{\gamma+2s}{2},k}^2. 
\end{align*} 
Combining the above estimate and taking summation of \eqref{105} over $|\alpha|+|\beta|\le K$, $k\le k_0$ for any $k_0\ge 0$, we have 
\begin{align*}
\E_{K,l,k_0+1}(t)&\lesssim \E_{K,l,0} +\E_{K,l,k_0-1}+\E_{K,l+1,k_0}+\E_{K+1,l+1,k_0}^2\\&\quad+\E_{K+2,l+{\gamma+2s},k_0}+ \E_{K+2,l+\frac{\gamma+2s}{2},k_0}^2.
\end{align*} 
The $t$ derivative on the right hand is less than the left hand. 
Hence, noticing \eqref{107a}, for any $T>\tau>0$, we have
\begin{align*}
\sup_{\tau\le t\le T}\E_{K,l,k_0}(t)\le C_{\tau,T,l,k_0}.
\end{align*}
For the time derivatives on $\na_x\phi$, we apply \eqref{34a} to obtain 
\begin{align*}
	\sup_{\tau\le t\le T}\sum_{|\al|\le K,\,k\le k_0}\|\pa^\al\pa^k_t\na_x\phi\|_{L^2_x}^2\lesssim \sup_{\tau\le t\le T}\E_{K,l,k_0}(t)\le C_{\tau,T,l,k_0}.
\end{align*}
Then we obtain \eqref{19b}. 
Consequently, by Sobolev embedding, $f\in C^\infty(\R^+_t\times\R^3_x\times\R^3_v)$.

\medskip

\smallskip
\noindent{\bf Step 3.}
Now we additionally assume \eqref{19aa} is sufficiently small.
Noticing $\psi=1$ in Theorem \ref{thm21}, \eqref{15a} shows that for any $\tau_0\ge \tau$, 
\begin{align*}
\sum_{|\alpha|\le 4}\|\partial^\alpha E(\tau_0)\|^2_{L^2_x}+\sum_{|\alpha|\le 4}\|\partial^\alpha\P f(\tau_0)\|^2_{L^2_{v,x}}+\sum_{\substack{|\alpha|+|\beta|\le 4}}\|w_l(\al,\beta)\partial^\alpha_\beta(\I-\P) f(\tau_0)\|^2_{L^2_{v,x}}
\lesssim \epsilon_0^2.
\end{align*}
Using this as the initial data instead of \eqref{15aa}, we can apply the above calculation on any time interval $[\tau_0,\tau_0+t_0]$ to obtain the same estimate with constants independent of $T$. In this case, we use 
\begin{align*}
\sup_{\tau_0\le t\le \tau_0+t_0}\E_{K,l}(t)\lesssim \epsilon_1^2C_{\tau}
\end{align*}instead of \eqref{106}, where the constant $C_\tau$ is independent of $\tau_0$. Recall the choice of $$t_0=t_0(\epsilon_1,\|\<v\>^{C_{K,l}}f\|_{L^2_{v,x}})>0$$ in \eqref{76} such that $t_0$ is uniform in any time $t$, we can obtain a uniform estimate independent of time $T$ and this completes the proof of Theorem \ref{main2} (3). Notice that the estimate of \eqref{79} is necessarily depending on $\tau$ since $\psi_{|\alpha|+|\beta|-4}^{-1}$ is singular near $t=0$ when $|\alpha|+|\beta|>4$.

 \medskip

 \smallskip
 \noindent{\bf Step 4.}
 If we assume \eqref{19aab} is sufficiently small for some large enough $C_{K,l}>0$, then by Theorem \ref{thm21}, we can obtain the estimate \eqref{19aabb} with $n=C_{K,l}$. Then the result follows from the same argument as Step 3. 
\qe\end{proof}

	\medskip
\noindent {\bf Acknowledgements.} 
Dingqun Deng was supported by a Direct Grant from BIMSA and YMSC. The author would thank Prof. Tong Yang for the valuable comments on the manuscript. 


\begin{thebibliography}{10}
	
	\bibitem{Alexandre2000}
	R.~Alexandre, L.~Desvillettes, C.~Villani, and B.~Wennberg, \emph{{Entropy
			Dissipation and Long-Range Interactions}}, Arch. Ration. Mech. Anal.
	\textbf{152} (2000), no.~4, 327--355.
	
	\bibitem{Alexandre2010}
	R.~Alexandre, Y.~Morimoto, S.~Ukai, C.-J. Xu, and T.~Yang, \emph{{Regularizing
			Effect and Local Existence for the Non-Cutoff Boltzmann Equation}}, Arch.
	Ration. Mech. Anal. \textbf{198} (2010), no.~1, 39--123.
	
	\bibitem{Alexandre2011aa}
	\bysame, \emph{{Global Existence and Full Regularity of the Boltzmann Equation
			Without Angular Cutoff}}, Comm. Math. Phys. \textbf{304} (2011), no.~2,
	513--581.
	
	\bibitem{Alexandre2011}
	\bysame, \emph{{The Boltzmann Equation Without Angular Cutoff in the Whole
			Space: Qualitative Properties of Solutions}}, Arch. Ration. Mech. Anal.
	\textbf{202} (2011), no.~2, 599--661.
	
	\bibitem{Alexandre2012}
	\bysame, \emph{{The Boltzmann equation without angular cutoff in the whole
			space: I, Global existence for soft potential}}, J. Funct. Anal. \textbf{262}
	(2012), no.~3, 915--1010.
	
	\bibitem{Alexandre2013}
	\bysame, \emph{{Local existence with mild regularity for the Boltzmann
			equation}}, Kinet. Relat. Models \textbf{6} (2013), no.~4, 1011--1041.
	
	\bibitem{Global2019}
	R. Alexandre, F. H{\'{e}}rau, and W.-X. Li,
	\emph{{Global hypoelliptic and symbolic estimates for the linearized
			Boltzmann operator without angular cutoff}}, J. Math. Pures Appl.
	\textbf{126} (2019), 1--71.
	
	\bibitem{Barbaroux2017a}
	J.-M. Barbaroux, D. Hundertmark, T. Ried, and S. Vugalter,
	\emph{{Gevrey Smoothing for Weak Solutions of the Fully Nonlinear Homogeneous
			Boltzmann and Kac Equations Without Cutoff for Maxwellian Molecules}}, Arch.
	Ration. Mech. Anal. \textbf{225} (2017), no.~2, 601--661.
	
	\bibitem{Chen2021}
	H. Chen, X. Hu, W.-X. Li, and J.-P Zhan, \emph{{The Gevrey smoothing
			effect for the spatially inhomogeneous Boltzmann equations without cut-off}},
	Sci. China Math. (2021).
	
	\bibitem{Chen2011}
	Y.-M. Chen and L.-B He, \emph{{Smoothing Estimates for Boltzmann Equation
			with Full-Range Interactions: Spatially Homogeneous Case}}, Arch. Ration.
	Mech. Anal. \textbf{201} (2011), no.~2, 501--548.
	
	\bibitem{Chen2012}
	\bysame, \emph{{Smoothing Estimates for Boltzmann Equation with Full-range
			Interactions: Spatially Inhomogeneous Case}}, Arch. Ration. Mech. Anal.
	\textbf{203} (2012), no.~2, 343--377.
	
	\bibitem{Deng2020a}
	D.-Q Deng, \emph{{Dissipation and Semigroup on $H^k_n$: Non-cutoff
			Linearized Boltzmann Operator with Soft Potential}}, SIAM J. Math. Anal.
	\textbf{52} (2020), no.~3, 3093--3113.
	
	\bibitem{Deng2020b}
	\bysame, \emph{{Regularity of Non-cutoff Boltzmann Equation with Hard
			Potential}}, SIAM J. Math. Anal. \textbf{53} (2021), no.~4, 4513--4536.
	
	\bibitem{Deng2021b}
	\bysame, \emph{{Regularity of the
			Vlasov{\textendash}Poisson{\textendash}Boltzmann System Without Angular
			Cutoff}}, Comm. Math. Phys. \textbf{387} (2021), no.~3, 1603--1654.
	
	\bibitem{Deng2021a}
	\bysame, \emph{{Smoothing estimates of the Vlasov-Poisson-Landau system}}, J.
	Differential Equations \textbf{301} (2021), 112--168.
	
	\bibitem{Duan2021a}
	R.-J Duan, W.-X. Li, and L.-Q Liu, \emph{{Gevrey regularity of mild
			solutions to the non-cutoff Boltzmann equation}}, Preprint (2021).
	
	\bibitem{Duan2013}
	R.-J Duan and S.-Q Liu, \emph{{The Vlasov-Poisson-Boltzmann System
			without Angular Cutoff}}, Comm. Math. Phys. \textbf{324} (2013), no.~1,
	1--45.
	
	\bibitem{Duan2011}
	R.-J Duan and R.~M. Strain, \emph{{Optimal large-time behavior of the
			Vlasov-Maxwell-Boltzmann system in the whole space}}, Comm. Pure Appl. Math.
	(2011), n/a--n/a.
	
	\bibitem{Gressman2011}
	P.~T. Gressman and R.~M. Strain, \emph{{Global classical solutions of
			the Boltzmann equation without angular cut-off}}, J. Amer. Math. Soc.
	\textbf{24} (2011), no.~3, 771--771.
	
	\bibitem{Gressman2011a}
	\bysame, \emph{{Sharp anisotropic estimates for the Boltzmann collision
			operator and its entropy production}}, Adv. Math. \textbf{227} (2011), no.~6,
	2349--2384.
	
	\bibitem{Guo2002}
	Y. Guo, \emph{{The Vlasov-Poisson-Boltzmann system near Maxwellians}}, Comm.
	Pure Appl. Math. \textbf{55} (2002), no.~9, 1104--1135.
	
	\bibitem{Guo2003a}
	\bysame, \emph{{The Vlasov-Maxwell-Boltzmann system near Maxwellians}}, Invent.
	Math. \textbf{153} (2003), no.~3, 593--630.
	
	\bibitem{Guo2012}
	\bysame, \emph{{The Vlasov-Poisson-Landau system in a periodic box}}, J. Amer.
	Math. Soc. \textbf{25} (2012), no.~3, 759--812.
	
	\bibitem{Imbert2021}
	C. Imbert and L. Silvestre, \emph{{Global regularity estimates for the
			Boltzmann equation without cut-off}}, J. Amer. Math. Soc. (2021).
	
	\bibitem{Krall1973}
	Nicholas Krall, \emph{Principles of plasma physics}, McGraw-Hill, New York,
	1973.
	
	\bibitem{Leoni2017}
	G. Leoni, \emph{{A first course in Sobolev spaces}}, American
	Mathematical Society, Providence, Rhode Island, 2017.
	
	\bibitem{Lerner2010}
	Nicolas Lerner, \emph{{Metrics on the Phase Space and Non-Selfadjoint
			Pseudo-Differential Operators}}, Birkh\"{a}user Basel, Switzerland, 2010.
	
	\bibitem{Mouhot2007}
	C. Mouhot and R.~M. Strain, \emph{{Spectral gap and coercivity
			estimates for linearized Boltzmann collision operators without angular
			cutoff}}, J. Math. Pures Appl. \textbf{87} (2007), no.~5, 515--535.
	
	\bibitem{Strain2012}
	R.~M. Strain, \emph{{Optimal time decay of the non cut-off Boltzmann
			equation in the whole space}}, Kinet. Relat. Models \textbf{5} (2012), no.~3,
	583--613.
	
	\bibitem{Strain2013}
	R.~M. Strain and K.-Y. Zhu, \emph{{The Vlasov-Poisson-Landau System in
			$\mathbb{R}^3_x$}}, Arch. Ration. Mech. Anal. \textbf{210} (2013), no.~2,
	615--671.
	
\end{thebibliography}

\providecommand{\bysame}{\leavevmode\hbox to3em{\hrulefill}\thinspace}
\providecommand{\MR}{\relax\ifhmode\unskip\space\fi MR }
\providecommand{\MRhref}[2]{%
	\href{http://www.ams.org/mathscinet-getitem?mr=#1}{#2}
}
\providecommand{\href}[2]{#2}

\end{document}